\newtheorem{theorem}{Theorem}
\newtheorem{remark}[theorem]{Remark}
\newtheorem{lemma}[theorem]{Lemma}
\newtheorem{proposition}[theorem]{Proposition}
\newtheorem{definition}[theorem]{Definition}
\DeclareMathOperator*{\esssup}{ess\,sup}
\newcommand*\diff{\mathrm{d}}
\newcommand{\round}[1]{\left(#1\right)}
\DeclareMathOperator*{\divergenz}{div}              %
\newcommand{\Lp}[1]{L^{#1}(\Omega)}
\newcommand{\Lprand}[1]{L^{#1}(\Gamma)}
\newcommand{\Wp}[1]{W^{1,#1}(\Omega)}
\newcommand{\Wpzero}[1]{W^{1,#1}_0(\Omega)}
\newcommand{\eps}{\varepsilon}
\newcommand{\ph}{\varphi}
\newcommand{\into}{\int_{\Omega}}
\newcommand{\Linf}{L^{\infty}(\Omega)}
\newcommand{\close}{\overline{\Omega}}
\newcommand{\cprime}{$'$}
\renewcommand{\l}{\left}
\renewcommand{\r}{\right}
\numberwithin{theorem}{section}
\numberwithin{equation}{section}
\newcommand{\R}{{\mathbb R}}
\newcommand{\N}{{\mathbb N}}
\newcommand{\Assg}[1]{\textup{(g)}}
\title[New embedding results for double phase problems with variable exponents]
{New embedding results for double phase problems with variable exponents and a priori bounds for corresponding generalized double phase problems}
\author[K. Ho]{Ky Ho}
\address[K. Ho]{Institute of Applied Mathematics, University of Economics Ho Chi Minh City, 59C, Nguyen Dinh Chieu Street, District 3, Ho Chi Minh City, Viet Nam}
\email{kyhn@ueh.edu.vn}
\author[P. Winkert]{Patrick Winkert}
\address[P. Winkert]{Technische Universit\"{a}t Berlin, Institut f\"{u}r Mathematik, Stra\ss{}e des 17.\,Juni 136, 10623 Berlin, Germany}
\email{winkert@math.tu-berlin.de}
\subjclass{35B45, 35B65, 35D30, 35J60, 46E35}
\keywords{New embedding results, Musielak-Orlicz Sobolev space, double phase operator, Sobolev conjugate function, a priori bounds, De Giorgi-Nash-Moser iteration}
\begin{document}

\begin{abstract}
	In this paper we present new embedding results for Musielak-Orlicz Sobolev spaces of double phase type. Based on the continuous embedding of $W^{1,\mathcal{H}}(\Omega)$ into $L^{\mathcal{H}_*}(\Omega)$, where $\mathcal{H}_*$ is the Sobolev conjugate function of $\mathcal{H}$, we present much stronger embeddings as known in the literature. Based on these results, we consider generalized double phase problems involving such new type of growth with Dirichlet and nonlinear boundary condition and prove appropriate boundedness results of corresponding weak solutions based on the De Giorgi iteration along with localization arguments.
\end{abstract}

	\maketitle

\section{Introduction}

Recently, Crespo-Blanco-Gasi\'{n}ski-Harjulehto-Winkert \cite{Crespo-Blanco-Gasinski-Harjulehto-Winkert-2022} studied the so-called double phase operator with variable exponents given by
\begin{align}\label{double-phase-variable}
	\divergenz\left(|\nabla u|^{p(x)-2}\nabla u+\mu(x) |\nabla u|^{q(x)-2}\nabla u\right),
	\quad u\in \Wp{\mathcal{H}}
\end{align}
with $p,q \in C(\close)$ such that $1<p(x)<q(x)<N$ for all $x \in
\close$, $0\leq \mu(\cdot) \in \Lp{1}$ and $\Wp{\mathcal{H}}$ is the corresponding Musielak-Orlicz Sobolev space being a uniformly convex space. Under the assumptions above, it is shown that the operator is continuous, bounded and strictly monotone. Moreover, under some Lipschitz continuity properties on the exponents and the weight function, we have the continuous embedding
\begin{align}\label{embedding-sobolev-conjugate}
	\Wp{\mathcal{H}}\hookrightarrow \Lp{\mathcal{H}_*},
\end{align}
where the function $\mathcal{H}_*$ is called the Sobolev conjugate function of $\mathcal{H}$ given  by
\begin{align*}
	\mathcal{H}(x,t)=t^{p(x)}+\mu(x)t^{q(x)}\quad\text{for }(x,t)\in\Omega \times [0,\infty),
\end{align*}
see Definition \ref{def-conjugate-function} for the precise characterization of $\mathcal{H}_*$. The proof of the embedding \eqref{embedding-sobolev-conjugate} is based on general embedding results of Musielak-Orlicz Sobolev spaces obtained by Fan \cite{Fan-2012} under the additional condition
\begin{align*}
	\frac{q^+}{p^-}<1+\frac{1}{N}
\end{align*}
with $q^+$ and $p^-$ being the maximum and minimum of $q$ and $p$ on $\close$, respectively. It is known that the embedding in \eqref{embedding-sobolev-conjugate} is not sharp, see Adams-Fournier \cite{Adams-Fournier-2003}, Donaldson-Trudinger \cite{Donaldson-Trudinger-1971} or Fan \cite{Fan-2012}. In the case of sharp embedding results from Orlicz Sobolev spaces into Orlicz spaces we refer to the work of Cianchi \cite{Cianchi-1996}. So far, there does not exist any generalization of such sharp embeddings to Musielak-Orlicz Sobolev spaces.

The main objective of the paper is twofold. In the first part we want to discuss how we can obtain better embedding results from $\Wp{\mathcal{H}}$ into $\Lp{\ph}$ by using the embedding \eqref{embedding-sobolev-conjugate}. It will be seen that we get indeed a much better continuous embedding of the form
\begin{align}\label{embedding-introduction}
	\Wp{\mathcal{H}} \hookrightarrow \Lp{\mathcal{G}^*},
\end{align}
with
\begin{align}\label{critical-phi-functions}
	\mathcal{G}^*(x,t):=t^{p^*(x)}+\mu(x)^{\frac{q^*(x)}{q(x)}}t^{q^*(x)}, \quad (x,t)\in \overline{\Omega}\times [0,\infty),
\end{align}
where for a function $r\in C(\close)$ with $1<r(x)<N$ for all $x\in\close$, the critical exponent $r^*(\cdot)$ is  given by
\begin{align*}
	r^*(x):=\frac{Nr(x)}{N-r(x)}, \quad x\in\close.
\end{align*}
In addition we are able to prove that the exponent $\frac{q^*(x)}{q(x)}$ in \eqref{critical-phi-functions} of $\mu$ is optimal under all possible exponents so that \eqref{embedding-introduction} hold true. However, we do not know if the embedding in \eqref{embedding-introduction} is sharp under all generalized $\Phi$-functions. In the first part we furthermore obtain trace embeddings from $\Wp{\mathcal{H}}$ into $L^\varphi(\partial\Omega)$. Based on a general trace embedding result for Musielak-Orlicz Sobolev spaces obtained by Liu-Wang-Zhao  \cite{Liu-Wang-Zhao-2016}, we show the following critical trace embedding
\begin{equation}\label{embedding-introduction-T}
	\Wp{\mathcal{H}} \hookrightarrow L^{\mathcal{T}^*}(\partial\Omega),
\end{equation}
with
\begin{align}\label{critical-phi-functions-T}
	\mathcal{T}^*(x,t):=t^{p_*(x)}+\mu(x)^{\frac{q_*(x)}{q(x)}}t^{q_*(x)},\quad (x,t)\in \overline{\Omega}\times [0,\infty),
\end{align}
where for a function $r\in C(\close)$ with $1<r(x)<N$ for all $x\in\close$, the critical exponent $r_*(\cdot)$ is given by
\begin{align*}
	r_*(x):=\frac{(N-1)r(x)}{N-r(x)}, \quad x\in\close.
\end{align*}
We also prove corresponding ``subcritical'' embeddings related to \eqref{embedding-introduction} and \eqref{embedding-introduction-T} which turn out to be compact.

In the second part of the paper, based on the new embedding results in \eqref{embedding-introduction} and \eqref{embedding-introduction-T}, we study the boundedness of weak solutions to Dirichlet and Neumann problems of the form
\begin{equation}\label{D}
	\begin{aligned}
		-\operatorname{div}\mathcal{A}(x,u,\nabla u)& =\mathcal{B}(x,u,\nabla u)\quad && \text{in } \Omega,\\
		u & = 0 &&\text{on } \partial \Omega,
	\end{aligned}
\end{equation}
and
\begin{equation}\label{N}
	\begin{aligned}
		-\operatorname{div}\mathcal{A}(x,u,\nabla u)& =\mathcal{B}(x,u,\nabla u)\quad && \text{in } \Omega,\\
		\mathcal{A}(x,u,\nabla u)\cdot \nu & = \mathcal{C}(x,u) &&\text{on } \partial \Omega,
	\end{aligned}
\end{equation}
where $\Omega $ is a bounded domain in $\mathbb{R}^{N}$, $N\geq 2$, with Lipschitz boundary $\Gamma:=\partial\Omega$, $\nu(x)$ denotes the outer unit normal of $\Omega$ at $x\in \Gamma$ and the functions  $\mathcal{A}\colon  \Omega\times \mathbb{R} \times \mathbb R^N \to
\mathbb R^N$, $\mathcal{B}\colon\Omega\times\R\times\R^N\to\R$ and $\mathcal{C}\colon\Gamma\times\R\to\R$ are Carath\'{e}odory functions that fulfill structure conditions as developed in \eqref{critical-phi-functions} and \eqref{critical-phi-functions-T}, see hypotheses \eqref{D1}, \eqref{D2}, \eqref{N1} and \eqref{N2}. Our results are based on the so-called De Giorgi-Nash-Moser theory, which provides iterative methods based on truncation techniques to get a priori bounds for certain equations, see the works of De Giorgi \cite{De-Giorgi-1957}, Nash \cite{Nash-1958} and Moser \cite{Moser-1960}. The techniques developed in these papers provided powerful tools to prove local and global boundedness, the Harnack and the weak Harnack inequality and the H\"older continuity of weak solutions. For more information we refer to the monographs of Gilbarg-Trudinger \cite{Gilbarg-Trudinger-1983}, Lady{\v{z}}enskaja-Ural{\cprime}ceva \cite{Ladyzenskaja-Uralceva-1968},  Lady{\v{z}}enskaja-Solonnikov-Ural{\cprime}ceva \cite{Ladyzenskaja-Solonnikov-Uralceva-1968} and Lieberman \cite{Lieberman-1996}. Our proofs for $L^\infty$-bounds are mainly based on the papers of Ho-Kim \cite{Ho-Kim-2019}, Ho-Kim-Winkert-Zhang \cite{Ho-Kim-Winkert-Zhang-2022} and Winkert-Zacher \cite{Winkert-Zacher-2012,Winkert-Zacher-2015}. We also mention the boundedness results in the works of Barletta-Cianchi-Marino \cite{Barletta-Cianchi-Marino-2022} (for problems in Orlicz spaces), Gasi\'{n}ski-Winkert \cite{Gasinski-Winkert-2020a, Gasinski-Winkert-2021} (for double phase Dirichlet and Neumann problems), Ho-Sim \cite{Ho-Sim-2017} (for weighted problems), Kim-Kim-Oh-Zeng \cite{Kim-Kim-Oh-Zeng-2022} (for variable exponent double phase problems with a growth less than $p^*(\cdot)$), Marino-Winkert \cite{Marino-Winkert-2020, Marino-Winkert-2019} (for critical problems in $\Wp{p}$) and Winkert \cite{Winkert-2010} (for subcritical problems in $\Wp{p}$).

Coming back to the operator \eqref{double-phase-variable}, it is clear that this is a natural extension of the classical double phase operator when $p$ and $q$ are constants, namely
\begin{align}\label{double-phase}
	\divergenz\Big(|\nabla u|^{p-2}\nabla u+\mu(x) |\nabla u|^{q-2}\nabla u\Big).
\end{align}
It is easy to verify, that if  $\inf_{\close} \mu\geq \mu_0>0$ or $\mu\equiv 0$, then the operator in \eqref{double-phase-variable} becomes the weighted $(q(\cdot),p(\cdot))$-Laplacian or the $p(\cdot)$-Laplacian, respectively. The energy functional $I\colon\Wp{\mathcal{H}}\to \R$ related to the double phase
operator \eqref{double-phase-variable} is given by
\begin{align}\label{integral_minimizer}
	I(u)=\into \l( \frac{|\nabla u|^{p(x)}}{p(x)}
	+ \mu(x) \frac{|\nabla u|^{q(x)}}{q(x)}\r)\diff x,
\end{align}
where the integrand
\begin{align*}
	\mathcal{R}(x,\xi)=\frac{1}{p(x)}|\xi|^{p(x)}+\frac{\mu(x)}{q(x)}|\xi|^{q(x)}
	\quad \text{for all }(x,\xi) \in \Omega\times \R^N
\end{align*}
of $I$ has unbalanced growth if $0\leq \mu(\cdot) \in \Lp{\infty}$, that is,
\begin{align*}
	c_1|\xi|^{p(x)} \leq \mathcal{R}(x,\xi) \leq c_2 \l(1+|\xi|^{q(x)}\r)
\end{align*}
for a.\,a.\,$x\in\Omega$ and for all $\xi\in\R^N$ with $c_1,c_2>0$. The main feature of the functional $I$ given in \eqref{integral_minimizer} is the change of ellipticity on the set where the weight function is zero, that is, on the set $\{x\in \Omega\,:\, \mu(x)=0\}$. This means, that the energy density of $I$ exhibits ellipticity in the gradient of order $q(x)$ in the set $\{x\in\Omega\,:\,\mu(x)>\eps\}$ for any fixed $\eps>0$ and of order $p(x)$ on the points $x$ where $\mu(x)$ vanishes. Therefore, the integrand $\mathcal{R}$ switches between two different phases of elliptic behaviours and so it is called double phase.

We point out that Zhikov \cite{Zhikov-1986} was the first who considered functionals defined by
\begin{align*}
	u \mapsto \int \l( |\nabla u|^{p}
	+ \mu(x) |\nabla u|^{q}\r)\diff x,
\end{align*}
whose integrands change their ellipticity according to a point in order to provide
models for strongly anisotropic materials. This type of functional given in \eqref{integral_minimizer} has been treated in many papers concerning regularity of local minimizers. In this direction we mention the works of Baroni-Colombo-Mingione \cite{Baroni-Colombo-Mingione-2015,Baroni-Colombo-Mingione-2016,Baroni-Colombo-Mingione-2018}, Baroni-Kuusi-Mingione \cite{Baroni-Kuusi-Mingione-2015}, Byun-Oh \cite{Byun-Oh-2020}, Colombo-Mingione \cite{Colombo-Mingione-2015a,Colombo-Mingione-2015b}, De Filippis \cite{De-Filippis-2018}, De Filippis-Palatucci \cite{De-Filippis-Palatucci-2019}, Harjulehto-H\"{a}st\"{o}-Toivanen \cite{Harjulehto-Hasto-Toivanen-2017},
Marcellini \cite{Marcellini-1991,Marcellini-1989b}, Ok \cite{Ok-2018,Ok-2020}, Ragusa-Tachikawa \cite{Ragusa-Tachikawa-2016,Ragusa-Tachikawa-2020}
and the references therein. Moreover, recent results for nonuniformly elliptic variational problems and nonautonomous functionals can be found in the papers of Beck-Mingione \cite{Beck-Mingione-2020,Beck-Mingione-2019},
De Filippis-Mingione \cite{De-Filippis-Mingione-2020,DeFilippis-Mingione-ARMA-2021,DeFilippis-Mingione-2021,DeFilippis-Mingione-JGA-2020,DeFilippis-Mingione-JGA-2020-2} and H\"{a}st\"{o}-Ok \cite{Hasto-Ok-2019}. For other applications in physics and engineering of double phase differential operators and related energy functionals given in \eqref{double-phase} and \eqref{integral_minimizer}, respectively,  we refer to the works of Bahrouni-R\u{a}dulescu-Repov\v{s} \cite{Bahrouni-Radulescu-Repovs-2019} on transonic flows, Benci-D'Avenia-Fortunato-Pisani \cite{Benci-DAvenia-Fortunato-Pisani-2000} on quantum physics and  Cherfils-Il\cprime yasov \cite{Cherfils-Ilyasov-2005} on reaction diffusion systems. For example, in the elasticity theory, the modulating coefficient $\mu(\cdot)$ dictates the geometry of composites made of two different materials with distinct power hardening exponents $q(\cdot)$ and $p(\cdot)$,
see Zhikov \cite{Zhikov-2011}.

At the end we also want to mention some recent existence results in the direction of double phase problems developed with different methods and techniques. We refer to the papers of Bahrouni-R\u{a}dulescu-Winkert \cite{Bahrouni-Radulescu-Winkert-2020} (Baouendi-Grushin operator),
Colasuonno-Squassina \cite{Colasuonno-Squassina-2016} (double phase eigenvalue problems), Farkas-Winkert \cite{Farkas-Winkert-2021} (Finsler double phase problems), Gasi\'{n}ski-Papageorgiou \cite{Gasinski-Papageorgiou-2019} (locally Lipschitz right-hand sides), Gasi\'{n}ski-Winkert \cite{Gasinski-Winkert-2020b,Gasinski-Winkert-2021} (convection and superlinear problems), Liu-Dai \cite{Liu-Dai-2018} (Nehari manifold treatment), Papageorgiou-R\u{a}dulescu-Repov\v{s} \cite{Papageorgiou-Radulescu-Repovs-2019-a, Papageorgiou-Radulescu-Repovs-2020} (property of the spectrum and ground state solutions), Perera-Squassina \cite{Perera-Squassina-2018} (Morse theory for double phase problems), Zhang-R\u{a}dulescu \cite{Zhang-Radulescu-2018} and Shi-R\u{a}dulescu-Repov\v{s}-Zhang \cite{Shi-Radulescu-Repovs-Zhang-2020} (double phase anisotropic variational problems with variable exponents), Zeng-Bai-Gasi\'{n}ski-Winkert \cite{Zeng-Bai-Gasinski-Winkert-2020} (implicit obstacle double phase problems), Zeng-R\u{a}dulescu-Winkert \cite{Zeng-Radulescu-Winkert-2022} (implicit obstacle double phase problems with mixed boundary condition), see also the references therein. It is worth pointing out that while these works treat  double phase problems in terms of two exponents $p(\cdot)$ and $q(\cdot)$ with $p(\cdot)<q(\cdot)$, its nonlinear terms have a growth that does not exceed $p^*(\cdot)$. Our new embeddings will provide a necessary ingredient to study double phase problems which have a growth between $p^*(\cdot)$ and $q^*(\cdot)$.

The paper is organized as follows. In Section \ref{Section-2} we recall some properties of the double phase operator with variable exponents and present relevant embedding results. Section \ref{Section-3} is devoted to the study of the critical and subcritical embeddings mentioned above, see Propositions \ref{prop_C-do-E}, \ref{prop_C-do-E-Op} and \ref{prop_C-T-E} for the critical case and Propositions \ref{prop_S-C-E} and \ref{prop_S-C-T-E} for the subcritical case. In Section \ref{Section-4} we consider problems \eqref{D} and \eqref{N} where we suppose subcritical growth and prove a priori bounds for corresponding weak solutions, see Theorems \ref{D.a-priori} and \ref{N.a-priori}. Finally, using the critical embedding in \eqref{embedding-introduction}, we also develop boundedness results for weak solutions of \eqref{D} and \eqref{N} in this case, see Theorems \ref{Theo.D} and \ref{Theo.N} in Section \ref{Section-5}.

\section{Preliminaries and Notations}\label{Section-2}

In this section we recall the main properties to Musielak-Orlicz Sobolev spaces and the double phase operator with variable exponents. These results are mainly taken from Crespo-Blanco-Gasi\'{n}ski-Harjulehto-Winkert \cite{Crespo-Blanco-Gasinski-Harjulehto-Winkert-2022}, we refer also to the books of Diening-Harjulehto-H\"{a}st\"{o}-R$\mathring{\text{u}}$\v{z}i\v{c}ka \cite{Diening-Harjulehto-Hasto-Ruzicka-2011}, Harjulehto-H\"{a}st\"{o} \cite{Harjulehto-Hasto-2019}, Musielak \cite{Musielak-1983}, Papageorgiou-R\u{a}dulescu-Repov\v{s} \cite{Papageorgiou-Radulescu-Repovs-2019-b}, R\u{a}dulescu-Repov\v{s} \cite{Radulescu-Repovs-2015} and the papers of Colasuonno-Squassina \cite{Colasuonno-Squassina-2016}, Fan \cite{Fan-2012}, Fan-Zhao \cite{Fan-Zhao-2001}, Kov{\'a}{\v{c}}ik-R{\'a}kosn{\'{\i}}k \cite{Kovacik-Rakosnik-1991} and Liu-Wang-Zhao \cite{Liu-Wang-Zhao-2016}.

Let $\Omega$ be a bounded domain in $\mathbb{R}^N$ with Lipschitz boundary $\Gamma:=\partial\Omega$ and let $M(\Omega)$ be the space of all measurable functions $u\colon \Omega\to\R$.

We start with the following definition.
\begin{definition}
	$~$
	\begin{enumerate}
		\item[\textnormal{(i)}]
		A continuous and convex function $\ph\colon[0,\infty)\to[0,\infty)$ is said to be a $\Phi$-function if $\ph(0)=0$ and $\ph(t)>0$ for all $t >0$.
		\item[\textnormal{(ii)}]
		A function $\ph\colon\Omega \times [0,\infty)\to[0,\infty)$ is said to be a generalized $\Phi$-function if $\ph(\cdot,t)\in M(\Omega)$ for all $t\geq 0$ and $\ph(x,\cdot)$ is a $\Phi$-function for a.\,a.\,$x\in\Omega$. We denote the set of all generalized $\Phi$-functions on $\Omega$ by $\Phi(\Omega)$.
		\item[\textnormal{(iii)}]
		A function $\ph\in\Phi(\Omega)$ is locally integrable if $\ph(\cdot,t) \in L^{1}(\Omega)$ for all $t>0$.
		\item[\textnormal{(iv)}]
		Given $\ph, \psi \in \Phi(\Omega)$, we say that $\ph$ is weaker than $\psi$, denoted by $\ph \prec \psi$, if there exist two positive constants $C_1, C_2$ and a nonnegative function $h\in\Lp{1}$ such that
		\begin{align*}
			\ph(x,t) \leq C_1 \psi(x,C_2t)+h(x)
		\end{align*}
		for a.\,a.\,$x\in \Omega$ and for all $t \in[0,\infty)$.
		\item[\textnormal{(v)}]
			Let $\phi, \psi \in \Phi(\Omega)$. We say that $\phi$ increases essentially slower than $\psi$ near infinity, if for any $k>0$
			\begin{align*}
				\lim_{t\to\infty} \frac{\phi(x,kt)}{\psi(x,t)}=0 \quad \text{uniformly for a.\,a.\,}x\in\Omega.
			\end{align*}
			We write $\phi \ll \psi$.
	\end{enumerate}
\end{definition}
For a given $\ph \in \Phi(\Omega)$ we define the corresponding modular $\rho_\ph$ by
\begin{align*}
	\rho_\ph(u):= \into \ph\left(x,|u|\right)\,\diff x.
\end{align*}
Then, the Musielak-Orlicz space $L^\ph(\Omega)$ is defined by
\begin{align*}
	L^\ph(\Omega):=\left \{u \in M(\Omega)\,:\, \text{there exists }\alpha>0 \text{ such that }\rho_\ph(\alpha u)< +\infty \right \}
\end{align*}
equipped with the norm
\begin{align*}
	\|u\|_{\ph,\Omega}:=\inf \left\{\alpha >0 \, : \, \rho_\ph \left(\frac{u}{\alpha}\right)\leq 1\right\}.
\end{align*}
Similarly, we define Musielak-Orlicz spaces $L^{\ph}(\Gamma)$ on the boundary equipped with the norm $\|\cdot\|_{\ph,\Gamma}$, where we use the $(N-1)$-dimensional Hausdorff surface measure $\sigma$ on $\R^N$.

The following proposition can be found in Musielak \cite[Theorem 7.7 and Theorem 8.5]{Musielak-1983}.

\begin{proposition}\label{prop_complete}$~$
	\begin{enumerate}
		\item[\textnormal{(i)}]
			Let $\ph \in \Phi(\Omega)$. Then the Musielak-Orlicz space $\Lp{\ph}$ is complete with respect to the norm $\|\cdot\|_{\varphi,\Omega}$, that is, $\left(\Lp{\ph},\|\cdot\|_{\varphi,\Omega}\right)$ is a Banach space.
		\item[\textnormal{(ii)}]
			Let $\ph,\psi \in \Phi(\Omega)$ be locally integrable with $\ph \prec \psi$. Then
			\begin{align*}
				\Lp{\psi} \hookrightarrow \Lp{\ph}.
			\end{align*}
	\end{enumerate}
\end{proposition}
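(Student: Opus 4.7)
Both assertions are classical, and my plan is to sketch the standard arguments rather than a citation, adapted to the generalized $\Phi$-function setting.

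For part (i), I would follow the usual completeness proof for Banach function spaces defined via a Luxemburg norm. Take a Cauchy sequence $(u_n)$ in $\Lp{\ph}$ and extract a subsequence $(u_{n_k})$ with $\|u_{n_k}-u_{n_{k+1}}\|_{\ph,\Omega}<2^{-k}$. The unit ball property of the Luxemburg norm, namely $\|v\|_{\ph,\Omega}\le 1$ iff $\rho_\ph(v)\le 1$ (which holds for any left-continuous $\Phi$-function), then yields $\rho_\ph\bigl(2^k(u_{n_k}-u_{n_{k+1}})\bigr)\le 1$. The next step is to upgrade this modular control to convergence in measure: for any $\delta,\eta>0$ one estimates $\meas\{x\in E : |u_{n_k}(x)-u_{n_{k+1}}(x)|>\delta\}$ by integrating the modular over the set where $\ph(x,\cdot)$ is nontrivial, which works on any set of finite measure because $\ph(\cdot,t)$ is measurable for each $t$. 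Passing to a further a.e.\ convergent sub-subsequence gives a pointwise limit $u$. To finish, I would invoke Fatou's lemma for the modular in the form $\rho_\ph(u-u_{n_j})\le \liminf_k \rho_\ph(u_{n_k}-u_{n_j})$, use it to conclude $u-u_{n_j}\in \Lp{\ph}$ (hence $u\in\Lp{\ph}$) and $\|u-u_{n_j}\|_{\ph,\Omega}\to 0$, and then deduce $u_n\to u$ since the whole sequence was Cauchy.

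For part (ii), I would first establish the set-theoretic inclusion $\Lp{\psi}\subseteq \Lp{\ph}$. Given $u\in\Lp{\psi}$, pick $\alpha>0$ with $\rho_\psi(u/\alpha)<\infty$ and apply the defining inequality $\ph(x,t)\le C_1\psi(x,C_2t)+h(x)$ with $t=|u|/(C_2\alpha)$ to obtain $\rho_\ph\bigl(u/(C_2\alpha)\bigr)\le C_1\rho_\psi(u/\alpha)+\|h\|_{1,\Omega}<\infty$, which places $u$ in $\Lp{\ph}$. For continuity of the inclusion I would use the closed graph theorem, which is now available because both spaces are Banach by part (i). Suppose $u_n\to u$ in $\Lp{\psi}$ and $u_n\to v$ in $\Lp{\ph}$; by the convergence-in-measure step already invoked in (i), each convergence yields an a.e.\ convergent subsequence, so $u=v$ a.e., i.e.\ the graph of the inclusion map is closed.

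The main technical obstacle I expect is the passage from Luxemburg norm convergence to convergence in measure when $\ph$ genuinely depends on $x$: unlike the constant-exponent case, a single threshold $\ph(\delta)$ does not bound the measure of $\{|v|>\delta\}$ uniformly, so one has to exhaust $\Omega$ by sets on which $\ph(x,\delta)$ is bounded below by a positive constant, relying on the measurability and positivity of $\ph(\cdot,\delta)$. Once this is done, the closed graph approach in (ii) sidesteps the awkwardness caused by the additive $h$-term, which would otherwise prevent a direct scaling bound of the form $\|u\|_{\ph,\Omega}\le C\|u\|_{\psi,\Omega}$.
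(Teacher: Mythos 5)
Your proposal is correct, but note that the paper does not prove this proposition at all: it is quoted from Musielak's monograph (Theorems 7.7 and 8.5), so your sketch is in effect a reconstruction of those classical arguments rather than a parallel to anything in the text. The completeness argument in (i) — fast Cauchy subsequence, the unit-ball property of the Luxemburg norm, convergence in measure obtained by exhausting $\Omega$ with the sets where $\ph(\cdot,\delta)\geq 1/m$, and Fatou's lemma for the modular — is exactly the standard route and is sound; your identification of the $x$-dependence of $\ph$ as the place where the constant-exponent argument must be localized is the right technical point. In (ii) your set-theoretic inclusion is fine, but the closing remark is slightly off: the additive term $h$ does \emph{not} prevent a direct homogeneous bound, so the closed graph theorem is a valid but unnecessary detour. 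Indeed, if $\lambda=\|u\|_{\psi,\Omega}>0$ then $\rho_\psi(u/\lambda)\leq 1$, and with $K:=\max\{1,\,C_1+\|h\|_{1,\Omega}\}$ convexity gives
\begin{align*}
\rho_\ph\left(\frac{u}{C_2K\lambda}\right)\leq \frac{1}{K}\,\rho_\ph\left(\frac{u}{C_2\lambda}\right)\leq \frac{1}{K}\left[C_1\rho_\psi\left(\frac{u}{\lambda}\right)+\|h\|_{1,\Omega}\right]\leq 1,
\end{align*}
hence $\|u\|_{\ph,\Omega}\leq C_2K\|u\|_{\psi,\Omega}$, which yields the continuity of the embedding in one line and without invoking part (i).
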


Next, we define Musielak-Orlicz Sobolev spaces. For this purpose, we need the following definition.
\begin{definition}$~$
	\begin{enumerate}
		\item[\textnormal{(i)}]
			The function $\ph\colon [0,\infty) \to [0,\infty)$ is called $N$-function if it is a $\Phi$-function such that
			\begin{align*}
				\lim_{t\to 0^+} \frac{\ph(t)}{t}=0
				\quad\text{and}\quad
				\lim_{t\to\infty} \frac{\ph(t)}{t}=\infty.
			\end{align*}
		\item[\textnormal{(ii)}]
			We call a function $\ph\colon\Omega\times \R\to[0,\infty)$ a generalized $N$-function if $\ph(\cdot,t)$ is measurable for all $t \in \R$ and $\ph(x,\cdot)$ is a $N$-function for a.\,a.\,$x\in\Omega$. We denote the class of all generalized $N$-functions by $N(\Omega)$.
	\end{enumerate}
\end{definition}
Let $\ph \in N(\Omega)$ be locally integrable. The Musielak-Orlicz Sobolev space $\Wp{\ph}$ is defined by
\begin{align*}
	\Wp{\ph} := \left \{u \in \Lp{\ph} \,:\, |\nabla u| \in \Lp{\ph} \right\}
\end{align*}
equipped with the norm
\begin{align*}
	\|u\|_{1,\ph} = \|u\|_\ph+\|\nabla u\|_\ph,
\end{align*}
where $\|\nabla u\|_\ph=\| \, |\nabla u| \,\|_\ph$.  The completion of $C^\infty_0(\Omega)$ in $\Wp{\ph}$ is denoted by $\Wpzero{\ph}$.

The next theorem can be found in Musielak \cite[Theorem 10.2]{Musielak-1983} and  Fan \cite[Propositions 1.7 and 1.8]{Fan-2012}.

\begin{theorem}
	Let $\ph\in N(\Omega)$ be locally integrable such that
	\begin{align*}
		\inf_{x\in\Omega} \ph(x,1)>0.
	\end{align*}
	Then the spaces $\Wp{\ph}$ and $\Wpzero{\ph}$ are separable Banach spaces which are reflexive if $\Lp{\ph}$ is reflexive.
\end{theorem}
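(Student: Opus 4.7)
The plan is to reduce everything to known properties of the Musielak--Orlicz space $L^\varphi(\Omega)$ by realizing $W^{1,\varphi}(\Omega)$ as a closed subspace of the product $L^\varphi(\Omega)\times L^\varphi(\Omega;\R^N)$. Concretely, I would consider the canonical map
\[
T\colon W^{1,\varphi}(\Omega)\to L^\varphi(\Omega)\times L^\varphi(\Omega;\R^N),\qquad T(u)=(u,\nabla u),
\]
endow the target with the product norm $\|(v,w)\|=\|v\|_{\varphi,\Omega}+\|w\|_{\varphi,\Omega}$, and observe that by the very definition of the Sobolev norm, $T$ is a linear isometry onto its image. Under the hypothesis that $\varphi$ is a locally integrable generalized $N$-function with $\inf_{x\in\Omega}\varphi(x,1)>0$, Musielak's theory (Proposition~\ref{prop_complete}(i) in particular) gives that $L^\varphi(\Omega)$ and $L^\varphi(\Omega;\R^N)$ are Banach, separable, and reflexive whenever $L^\varphi(\Omega)$ is.

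The core step is to verify that $T(W^{1,\varphi}(\Omega))$ is closed. So I would take a sequence $(u_n)\subset W^{1,\varphi}(\Omega)$ with $T(u_n)\to(u,v)$ in the product space, i.e.\ $u_n\to u$ in $L^\varphi(\Omega)$ and $\nabla u_n\to v$ in $L^\varphi(\Omega;\R^N)$. The crucial observation is that the assumption $\inf_x\varphi(x,1)>0$ combined with the convexity and $\varphi(x,0)=0$ forces $\varphi(x,t)\geq c\,t$ for $t\geq1$ uniformly in $x$; together with local integrability this yields a continuous embedding $L^\varphi(\Omega)\hookrightarrow L^1(\Omega)$ (compare with the proof of Proposition~\ref{prop_complete}(ii) applied to $\varphi$ versus $\psi(x,t)=t$). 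Hence $u_n\to u$ and $\nabla u_n\to v$ in $L^1_{\loc}(\Omega)$, so for every test function $\phi\in C_c^\infty(\Omega)$,
\[
\int_\Omega u\,\partial_i\phi\,\diff x=\lim_{n\to\infty}\int_\Omega u_n\,\partial_i\phi\,\diff x=-\lim_{n\to\infty}\int_\Omega (\partial_i u_n)\phi\,\diff x=-\int_\Omega v_i\phi\,\diff x,
\]
so $v=\nabla u$ distributionally, giving $u\in W^{1,\varphi}(\Omega)$ with $T(u)=(u,v)$. This closes the image.

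From here the conclusions are immediate. The completeness of $W^{1,\varphi}(\Omega)$ is inherited as a closed subspace of a Banach product. Separability follows because separability passes from a metric space to its subspaces, and reflexivity because closed subspaces of reflexive Banach spaces are reflexive. For $W^{1,\varphi}_0(\Omega)$, being by definition the closure of $C_0^\infty(\Omega)$ inside $W^{1,\varphi}(\Omega)$, it is a closed subspace and therefore inherits separability and (when available) reflexivity from $W^{1,\varphi}(\Omega)$ for free.

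The main technical obstacle I anticipate is the justification of $L^\varphi(\Omega)\hookrightarrow L^1_{\loc}(\Omega)$: one has to use both the generalized $N$-function structure (so that $\varphi$ dominates the identity at infinity uniformly in $x$, via $\inf_x\varphi(x,1)>0$) and the local integrability of $\varphi$ to turn modular control into control of the $L^1$-norm. Once this embedding is in hand, everything else is a soft Banach-space argument; the only remaining bookkeeping is to make sure the product norm is equivalent to the natural Sobolev norm $\|u\|_\varphi+\|\nabla u\|_\varphi$, which is automatic.
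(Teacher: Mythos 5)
The paper itself offers no proof of this theorem --- it is quoted from Musielak and Fan --- so there is nothing internal to compare with; your graph-space reduction is the standard route, and most of it is sound. The map $u\mapsto(u,\nabla u)$ is indeed an isometry of $\Wp{\ph}$ onto a subspace of $\Lp{\ph}\times L^{\ph}(\Omega;\R^N)$, and your key step works: convexity with $\ph(x,0)=0$ gives $\ph(x,t)\geq \ph(x,1)\,t\geq c\,t$ for $t\geq 1$ with $c:=\inf_x\ph(x,1)>0$, so the identity function is weaker than $\ph$ (take $h\equiv 1\in\Lp{1}$, $\Omega$ bounded), and Proposition \ref{prop_complete}(ii) yields $\Lp{\ph}\hookrightarrow \Lp{1}$; this lets you pass to the limit in the distributional identity, so the image is closed. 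Completeness, and reflexivity when $\Lp{\ph}$ is reflexive, then pass to closed subspaces (for the vector-valued factor one should remark that $L^{\ph}(\Omega;\R^N)$ is isomorphic, componentwise, to the $N$-fold product of $\Lp{\ph}$, but that is routine), and $\Wpzero{\ph}$ inherits these properties as a closed subspace.

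The genuine gap is the separability step. You assert that ``Musielak's theory'' gives separability of $\Lp{\ph}$ (hence of the product), but Proposition \ref{prop_complete}(i) gives only completeness, and separability of $\Lp{\ph}$ simply does not follow from the stated hypotheses: local integrability together with $\inf_x\ph(x,1)>0$ carries no $\Delta_2$-type information. For example, $\ph(x,t)=e^{t}-t-1$ is a locally integrable generalized $N$-function with $\ph(x,1)=e-2>0$, yet $\Lp{\ph}$ is the exponential Orlicz space, which is not separable. What your argument actually proves is ``$\Wp{\ph}$ and $\Wpzero{\ph}$ are separable provided $\Lp{\ph}$ is separable'' --- the conditional form in which such results are usually stated in the sources the paper cites --- not the unconditional separability claimed in the statement. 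To close the argument you must either take separability of $\Lp{\ph}$ as an additional input or impose a condition on $\ph$ (e.g.\ a uniform $\Delta_2$-condition) that guarantees it; it cannot be extracted from $\inf_x\ph(x,1)>0$ and local integrability alone.
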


Let us now come to our special Musielak-Orlicz Sobolev space and its properties, which was introduced in \cite{Crespo-Blanco-Gasinski-Harjulehto-Winkert-2022}. In the following, for $h \in C(\close)$ we denote
\begin{align*}
	h^{-}:=\inf_{x\in \close} h(x)
	\quad \text{and} \quad
	h^{+}:=\sup_{x\in \close} h(x).
\end{align*}

We suppose the following assumptions:
\begin{enumerate}[label=\textnormal{(H$1$)},ref=\textnormal{H$1$}]
	\item\label{H1}
		$p,q\in C(\close)$ such that $1<p(x)<N$ and $p(x) < q(x)$ for all $x\in\close$ and $0 \leq \mu(\cdot) \in \Lp{1}$.
\end{enumerate}

Under hypothesis \eqref{H1}, let $\mathcal{H} \colon \Omega \times [0,\infty) \to [0,\infty)$ be the nonlinear function defined by
\begin{align}\label{def-H}
	\mathcal{H}(x,t):=t^{p(x)} +\mu(x)t^{q(x)} \quad\text{for all } (x,t)\in \Omega \times [0,\infty).
\end{align}

Recall that the corresponding modular to $\mathcal{H}$ is given by
\begin{align}\label{modular-Lp}
	\rho_{\mathcal{H}}(u) = \into \mathcal{H} (x,|u|)\,\diff x.
\end{align}
Then, the corresponding Musielak-Orlicz space $\Lp{\mathcal{H}}$ is given
by
\begin{align*}
	L^{\mathcal{H}}(\Omega)=\left \{u \in M(\Omega) \,:\,\rho_{\mathcal{H}}(u) < +\infty \right \},
\end{align*}
endowed with the norm
\begin{align*}
	\|u\|_{\mathcal{H}} = \inf \left \{ \tau >0 : \rho_{\mathcal{H}}\left(\frac{u}{\tau}\right) \leq 1  \right \}.
\end{align*}

Next, we can introduce the Musielak-Orlicz Sobolev space $\Wp{\mathcal{H}}$ defined by
\begin{align*}
	\Wp{\mathcal{H}}
	=\left \{u \in L^{\mathcal{H}}(\Omega) \,:\,|\nabla u| \in L^{\mathcal{H}}(\Omega) \right \}
\end{align*}
equipped with the norm
\begin{align*}
	\|u\|_{1,\mathcal{H}} = \|u\|_{\mathcal{H}}+\|\nabla u\|_{\mathcal{H}},
\end{align*}
where $\|\nabla u\|_{\mathcal{H}}=\| \, |\nabla u| \,\|_{\mathcal{H}}$. Moreover, $\Wpzero{\mathcal{H}}$ is the completion of $C^\infty_0(\Omega)$ in $\Wp{\mathcal{H}}$. We know that  $\Lp{\mathcal{H}}$, $\Wp{\mathcal{H}}$ and $\Wpzero{\mathcal{H}}$ are reflexive Banach spaces, see \cite[Proposition 2.12]{Crespo-Blanco-Gasinski-Harjulehto-Winkert-2022}.

The following proposition gives the relation between the modular $\rho_{\mathcal{H}}$ and its norm $\|\cdot\|_\mathcal{H}$, see \cite[Proposition 2.13]{Crespo-Blanco-Gasinski-Harjulehto-Winkert-2022}.

\begin{proposition}\label{prop_mod-nor}
	Let hypotheses \eqref{H1} be satisfied, $u\in\Lp{\mathcal{H}}$ and let $\rho_{\mathcal{H}}$ be defined as in \eqref{modular-Lp}.
	\begin{enumerate}
		\item[\textnormal{(i)}]
		If $u\neq 0$, then $\|u\|_{\mathcal{H}}=\lambda$ if and only if $ \rho_{\mathcal{H}}(\frac{u}{\lambda})=1$.
		\item[\textnormal{(ii)}]
		$\|u\|_{\mathcal{H}}<1$ (resp.\,$>1$, $=1$) if and only if $ \rho_{\mathcal{H}}(u)<1$ (resp.\,$>1$, $=1$).
		\item[\textnormal{(iii)}]
		If $\|u\|_{\mathcal{H}}<1$, then $\|u\|_{\mathcal{H}}^{q^+}\leqslant \rho_{\mathcal{H}}(u)\leqslant\|u\|_{\mathcal{H}}^{p^-}$.
		\item[\textnormal{(iv)}]
		If $\|u\|_{\mathcal{H}}>1$, then $\|u\|_{\mathcal{H}}^{p^-}\leqslant \rho_{\mathcal{H}}(u)\leqslant\|u\|_{\mathcal{H}}^{q^+}$.
			\end{enumerate}
\end{proposition}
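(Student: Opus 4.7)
The four statements are best handled in the order (i), then (iii) and (iv), and finally (ii) as an immediate consequence. The only analytic input needed is the behaviour of the auxiliary scalar map $J(\la):=\rho_{\mathcal{H}}(u/\la)$ on $(0,\infty)$; everything else reduces to the observation that $p(x)$ and $q(x)$ both lie in $[p^-,q^+]$, so powers of $\la$ with such exponents can be pinched between $\la^{p^-}$ and $\la^{q^+}$ depending on whether $\la<1$ or $\la>1$.

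For (i), I would fix $u\in L^{\mathcal{H}}$ with $u\neq 0$ and analyse $J$. Since $p(x),q(x)>1$, the map $t\mapsto\mathcal{H}(x,t)$ is strictly increasing in $t$, so $J$ is strictly decreasing on $(0,\infty)$ (using that $\{|u|>0\}$ has positive measure). From $u\in L^{\mathcal{H}}$ we know $J(1)=\rho_{\mathcal{H}}(u)<\infty$, whence dominated convergence yields the continuity of $J$ on $[1,\infty)$ together with $\lim_{\la\to\infty}J(\la)=0$; on the other hand, monotone convergence on $\{|u|>0\}$ gives $\lim_{\la\to 0^+}J(\la)=+\infty$. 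Hence there exists a unique $\la_*>0$ with $J(\la_*)=1$, and inspection of the definition of the Luxemburg norm identifies $\|u\|_{\mathcal{H}}$ with $\la_*$.

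For (iii) and (iv), set $\la:=\|u\|_{\mathcal{H}}$ and factor out $\la$:
\[
\rho_{\mathcal{H}}(u)=\into\left(\la^{p(x)}\Big|\tfrac{u}{\la}\Big|^{p(x)}+\mu(x)\la^{q(x)}\Big|\tfrac{u}{\la}\Big|^{q(x)}\right)\diff x.
\]
Since $p^-\leq p(x),q(x)\leq q^+$ for every $x\in\Omega$, the factors $\la^{p(x)}$ and $\la^{q(x)}$ both lie in $[\la^{q^+},\la^{p^-}]$ when $\la<1$ and in $[\la^{p^-},\la^{q^+}]$ when $\la>1$. Pulling these constant bounds out of the integral and invoking $\rho_{\mathcal{H}}(u/\la)=1$ from (i) yields the two-sided estimates claimed in (iii) and (iv).

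Part (ii) is then automatic: if $\|u\|_{\mathcal{H}}<1$ then (iii) gives $\rho_{\mathcal{H}}(u)\leq\|u\|_{\mathcal{H}}^{p^-}<1$; if $\|u\|_{\mathcal{H}}>1$ then (iv) gives $\rho_{\mathcal{H}}(u)\geq\|u\|_{\mathcal{H}}^{p^-}>1$; and if $\|u\|_{\mathcal{H}}=1$ then (i) directly gives $\rho_{\mathcal{H}}(u)=1$. Since the three cases exhaust all possibilities for $\|u\|_{\mathcal{H}}$, the converse implications follow by elimination. The only delicate point I anticipate is the limit behaviour of $J$ for an $L^{\mathcal{H}}$-function that need not be bounded; however, since $\{|u|>0\}$ has positive measure and $t\mapsto\mathcal{H}(x,t)$ is unbounded on $[0,\infty)$, both of the required limits reduce to routine applications of the monotone and dominated convergence theorems and pose no genuine obstacle.
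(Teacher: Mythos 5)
Your proof is correct. Note first that the paper itself does not prove this proposition: it is quoted from Crespo-Blanco--Gasi\'nski--Harjulehto--Winkert \cite{Crespo-Blanco-Gasinski-Harjulehto-Winkert-2022}, where the proof is essentially the same standard Luxemburg-norm argument you give, so there is no genuine divergence of method. The structure of your argument is sound: since $q^+<\infty$, the function $\mathcal{H}$ satisfies a $\Delta_2$-type bound, namely $\mathcal{H}(x,t/a)\leq \max\left(a^{-p^-},a^{-q^+}\right)\mathcal{H}(x,t)$ for $a>0$, so $J(\la)=\rho_{\mathcal{H}}(u/\la)$ is finite for every $\la>0$, strictly decreasing (on $\{|u|>0\}$ of positive measure), tends to $\infty$ as $\la\to0^+$ and to $0$ as $\la\to\infty$; this gives the unique $\la_*$ with $J(\la_*)=1$ and identifies it with the norm, and the scaling argument with $\la^{p(x)},\la^{q(x)}\in[\la^{q^+},\la^{p^-}]$ (for $\la<1$) resp.\ $[\la^{p^-},\la^{q^+}]$ (for $\la>1$) yields (iii)--(iv), whence (ii) by trichotomy. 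The only imprecision is that you assert continuity of $J$ only on $[1,\infty)$, whereas the crossing $J(\la_*)=1$ may occur at $\la_*<1$ (when $\rho_{\mathcal{H}}(u)<1$); the same dominated convergence argument with the dominant $\max\left(a^{-p^-},a^{-q^+}\right)\mathcal{H}(x,|u|)$ gives continuity on every $[a,\infty)$, $a>0$, hence on all of $(0,\infty)$, so this is a one-line fix rather than a gap.
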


On $\Wp{\mathcal{H}}$, we will also work with the following equivalent norm
\begin{align}\label{equivalent-norm-W1H}
	\|u\|_{1,\mathcal{H},\Omega}:=\inf&\left\{\lambda >0 \,:\, \hat{\rho}_{1,\mathcal{H},\Omega}\left(\frac{u}{\lambda}\right)\le1\right\},
\end{align}
where the modular $\hat{\rho}_{1,\mathcal{H},\Omega}$ is given by
\begin{align}\label{modular-W1p}
	\hat{\rho}_{1,\mathcal{H},\Omega}(u) =\into \left[|\nabla u|^{p(x)}+\mu(x)|\nabla u|^{q(x)}+|u|^{p(x)}+\mu(x)|u|^{q(x)}\right]\,\diff x
\end{align}
for $u \in \Wp{\mathcal{H}}$.

The following results can be found in \cite[Proposition 2.14]{Crespo-Blanco-Gasinski-Harjulehto-Winkert-2022}.

\begin{proposition}\label{prop_mod-nor2}
	Let hypotheses \eqref{H1} be satisfied, let $y\in\Wp{\mathcal{H}}$ and let $\hat{\rho}_{1,\mathcal{H},\Omega}$ be defined as in  \eqref{modular-W1p}.
	\begin{enumerate}
		\item[\textnormal{(i)}]
		If $y\neq 0$, then $\|y\|_{1,\mathcal{H},\Omega}=\lambda$ if and only if $ \hat{\rho}_{1,\mathcal{H},\Omega}(\frac{y}{\lambda})=1$.
		\item[\textnormal{(ii)}]
		$\|y\|_{1,\mathcal{H},\Omega}<1$ (resp.\,$>1$, $=1$) if and only if $ \hat{\rho}_{1,\mathcal{H},\Omega}(y)<1$ (resp.\,$>1$, $=1$).
		\item[\textnormal{(iii)}]
		If $\|y\|_{1,\mathcal{H},\Omega}<1$, then $\|y\|_{1,\mathcal{H},\Omega}^{q^+}\leqslant \hat{\rho}_{1,\mathcal{H},\Omega}(y)\leqslant\|y\|_{1,\mathcal{H},\Omega}^{p^-}$.
		\item[\textnormal{(iv)}]
		If $\|y\|_{1,\mathcal{H},\Omega}>1$, then $\|y\|_{1,\mathcal{H},\Omega}^{p^-}\leqslant \hat{\rho}_{1,\mathcal{H},\Omega}(y)\leqslant\|y\|_{1,\mathcal{H},\Omega}^{q^+}$.
	\end{enumerate}
\end{proposition}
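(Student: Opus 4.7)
The plan is to follow the template of Proposition \ref{prop_mod-nor} (which gives the analogous statement for the $L^\mathcal{H}$-norm), treating $\hat{\rho}_{1,\mathcal{H},\Omega}$ as a single modular with the same homogeneity structure as $\rho_\mathcal{H}$. The core ingredient is the pointwise scaling estimate
\begin{equation*}
	\lambda^{q^+} \mathcal{H}(x, t) \leq \mathcal{H}(x, \lambda t) \leq \lambda^{p^-} \mathcal{H}(x, t) \quad \text{for } 0 < \lambda \leq 1,
\end{equation*}
with both inequalities reversing for $\lambda \geq 1$. These follow immediately from the identity $\mathcal{H}(x, \lambda t) = \lambda^{p(x)} t^{p(x)} + \mu(x) \lambda^{q(x)} t^{q(x)}$ and the monotonicity of $r \mapsto \lambda^r$, using $p(x), q(x) \in [p^-, q^+]$. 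Applied pointwise to both $|y|$ and $|\nabla y|$ and integrated over $\Omega$, they produce the corresponding two-sided bounds relating $\hat{\rho}_{1,\mathcal{H},\Omega}(\lambda y)$ to $\hat{\rho}_{1,\mathcal{H},\Omega}(y)$.

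For \textnormal{(i)}, fix $y \neq 0$ and set $\Psi(\lambda) := \hat{\rho}_{1,\mathcal{H},\Omega}(y/\lambda)$ for $\lambda > 0$. The scaling estimate shows that $\Psi$ is strictly decreasing on the set where it is finite, and the dominated convergence theorem yields its continuity there; since $y$ is nonzero on a set of positive measure, Fatou's lemma forces $\Psi(\lambda) \to +\infty$ as $\lambda \to 0^+$, while dominated convergence gives $\Psi(\lambda) \to 0$ as $\lambda \to \infty$. Hence there is a unique $\lambda^* > 0$ with $\Psi(\lambda^*) = 1$, and the Luxemburg-type definition in \eqref{equivalent-norm-W1H} forces $\|y\|_{1,\mathcal{H},\Omega} = \lambda^*$. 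Statement \textnormal{(ii)} is then immediate from strict monotonicity of $\Psi$: $\hat{\rho}_{1,\mathcal{H},\Omega}(y) = \Psi(1)$ is less than (respectively equal to, greater than) $1$ precisely when $\lambda^* = \|y\|_{1,\mathcal{H},\Omega}$ is less than (respectively equal to, greater than) $1$.

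For \textnormal{(iii)} and \textnormal{(iv)}, combine \textnormal{(i)} with the scaling estimate. If $\lambda := \|y\|_{1,\mathcal{H},\Omega} < 1$, apply the estimate above with factor $\lambda$ to $t = |y|/\lambda$ and $t = |\nabla y|/\lambda$, integrate over $\Omega$, and use $\hat{\rho}_{1,\mathcal{H},\Omega}(y/\lambda) = 1$ from \textnormal{(i)} to obtain $\lambda^{q^+} \leq \hat{\rho}_{1,\mathcal{H},\Omega}(y) \leq \lambda^{p^-}$, which is \textnormal{(iii)}. The case $\lambda > 1$ of \textnormal{(iv)} follows from the reversed scaling inequality by the same computation. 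The main obstacle, such as it is, lies in justifying the continuity and the two limits of $\Psi$; both are routine once the scaling estimate is used as a dominating bound, so the whole proof reduces to bookkeeping around these two scaling inequalities and mirrors the pattern of Proposition \ref{prop_mod-nor} almost verbatim.
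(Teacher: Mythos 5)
Your proof is correct, and it is essentially the standard argument: the paper itself does not prove this proposition but cites it from \cite[Proposition 2.14]{Crespo-Blanco-Gasinski-Harjulehto-Winkert-2022}, where the proof rests on exactly the same two-sided scaling estimate $\lambda^{q^+}\mathcal{H}(x,t)\leq\mathcal{H}(x,\lambda t)\leq\lambda^{p^-}\mathcal{H}(x,t)$ for $0<\lambda\leq 1$ (reversed for $\lambda\geq 1$) together with the unit-ball property of the Luxemburg norm. Your treatment of the limits and continuity of $\Psi$ is sound (note that, since $q^+<\infty$, the scaling bound even shows $\Psi(\lambda)<\infty$ for every $\lambda>0$, so no restriction to a finiteness set is needed), so there is nothing to add.
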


When $\mu(\cdot)\equiv 0$, we write $L^{p(\cdot)}(\Omega)$, $L^{p(\cdot)}(\Gamma)$, $W^{1,p(\cdot)}(\Omega)$, $W_0^{1,p(\cdot)}(\Omega)$, $\|\cdot\|_{p(\cdot),\Omega}$, $\|\cdot\|_{p(\cdot),\Gamma}$ and $\|\cdot\|_{1,p(\cdot),\Omega}$ in place of $\Lp{\mathcal{H}}$, $L^{\mathcal{H}}(\Gamma)$, $\Wp{\mathcal{H}}$, $\Wpzero{\mathcal{H}}$, $\|\cdot\|_{\mathcal{H},\Omega}$, $\|\cdot\|_{\mathcal{H},\Gamma}$ and $\|\cdot\|_{1,\mathcal{H},\Omega}$, respectively. For a function $r\in C(\close)$ with $1<r(x)<N$ for all $x\in\close$ we define
\begin{align}\label{critical-eponents}
	r^*(x)=\frac{Nr(x)}{N-r(x)}
	\quad\text{and}\quad
	r_*(x)=\frac{(N-1)r(x)}{N-r(x)}, \quad x\in\close.
\end{align}

The following embedding results can be found in \cite{Crespo-Blanco-Gasinski-Harjulehto-Winkert-2022}.

\begin{proposition}\label{proposition_embeddings}
	Let hypotheses \eqref{H1} be satisfied . Then the following embeddings hold:
	\begin{enumerate}
		\item[\textnormal{(i)}]
		$\Lp{\mathcal{H}} \hookrightarrow \Lp{r(\cdot)}$, $\Wp{\mathcal{H}}\hookrightarrow \Wp{r(\cdot)}$, $\Wpzero{\mathcal{H}}\hookrightarrow \Wpzero{r(\cdot)}$
		are continuous for all $r\in C(\close)$ with $1\leq r(x)\leq p(x)$ for all $x \in \close$;
		\item[\textnormal{(ii)}]
		if $p \in C^{0, 1}(\close)$, then $\Wp{\mathcal{H}} \hookrightarrow \Lp{r(\cdot)}$ and $\Wpzero{\mathcal{H}} \hookrightarrow \Lp{r(\cdot)}$ are continuous for $r \in C(\close)$ with $ 1 \leq r(x) \leq p^*(x)$ for all $x\in \close$;
		\item[\textnormal{(iii)}]
		$\Wp{\mathcal{H}} \hookrightarrow \Lp{r(\cdot)}$ and $\Wpzero{\mathcal{H}} \hookrightarrow \Lp{r(\cdot)}$ are compact for $r \in C(\close) $ with $ 1 \leq r(x) < p^*(x)$ for all $x\in \close$;
		\item[\textnormal{(iv)}]
		if $p \in W^{1,\gamma}(\Omega)$ for some $\gamma>N$, then $\Wp{\mathcal{H}} \hookrightarrow \Lprand{r(\cdot)}$ and $\Wpzero{\mathcal{H}} \hookrightarrow \Lprand{r(\cdot)}$ are continuous for $r \in C(\close)$ with $ 1 \leq r(x) \leq p_*(x)$ for all $x\in \close$;
		\item[\textnormal{(v)}]
		$\Wp{\mathcal{H}} \hookrightarrow \Lprand{r(\cdot)}$ and $\Wpzero{\mathcal{H}} \hookrightarrow \Lprand{r(\cdot)}$ are compact for $r \in C(\close) $ with $ 1 \leq r(x) < p_*(x)$ for all $x\in \close$;
		\item[\textnormal{(vi)}]
		if $\mu \in \Linf$, then $L^{q(\cdot)}(\Omega) \hookrightarrow \Lp{\mathcal{H}}$ is continuous.
	\end{enumerate}
\end{proposition}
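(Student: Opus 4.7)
The strategy is to reduce every assertion to the corresponding classical embedding theorem in the variable exponent setting, the link being the pointwise inequality $\mathcal{H}(x,t)\geq t^{p(x)}$ that is built into the definition \eqref{def-H}. This immediately furnishes $L^{\mathcal{H}}(\Omega)\hookrightarrow L^{p(\cdot)}(\Omega)$: indeed, if $u\in L^{\mathcal{H}}(\Omega)$ and $\lambda>\|u\|_{\mathcal{H}}$, then $\int_{\Omega}|u/\lambda|^{p(x)}\,\diff x\leq\rho_{\mathcal{H}}(u/\lambda)\leq 1$, so $\|u\|_{p(\cdot),\Omega}\leq\|u\|_{\mathcal{H}}$. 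Combined with the classical inclusion $L^{p(\cdot)}(\Omega)\hookrightarrow L^{r(\cdot)}(\Omega)$ valid whenever $1\leq r(x)\leq p(x)$ on $\Omega$ (see Kov\'a\v{c}ik--R\'akosn\'\i k), this proves the $L^{\mathcal{H}}\hookrightarrow L^{r(\cdot)}$ part of (i); applying the same estimate to $\nabla u$ and taking the sum of the norms yields the Sobolev-space variants $W^{1,\mathcal{H}}(\Omega)\hookrightarrow W^{1,r(\cdot)}(\Omega)$ and $W^{1,\mathcal{H}}_0(\Omega)\hookrightarrow W^{1,r(\cdot)}_0(\Omega)$.

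For (ii)--(v) the plan is to chain the continuous embedding $W^{1,\mathcal{H}}(\Omega)\hookrightarrow W^{1,p(\cdot)}(\Omega)$ obtained in (i) with the Sobolev and trace theorems in variable exponent Sobolev spaces. Precisely, when $p\in C_+(\close)\cap C^{0,1}(\close)$ the Sobolev inequality $W^{1,p(\cdot)}(\Omega)\hookrightarrow L^{p^*(\cdot)}(\Omega)$ holds, and then $L^{p^*(\cdot)}(\Omega)\hookrightarrow L^{r(\cdot)}(\Omega)$ whenever $r(x)\leq p^*(x)$, giving (ii); the compact version, together with the fact that strict inequality $r(x)<p^*(x)$ on $\close$ allows one to interpolate with an $\eps$-loss in the exponent, yields (iii). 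Under the stronger regularity $p\in W^{1,\gamma}(\Omega)$ with $\gamma>N$, the classical trace theorem of the Liu--Wang--Zhao type gives $W^{1,p(\cdot)}(\Omega)\hookrightarrow L^{p_*(\cdot)}(\partial\Omega)$, continuous or compact according to whether $r\leq p_*$ or $r<p_*$, which proves (iv) and (v). The same chain applies to $W^{1,\mathcal{H}}_0(\Omega)$ since it embeds continuously into $W^{1,p(\cdot)}_0(\Omega)$ (and hence into $W^{1,p(\cdot)}(\Omega)$).

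For (vi), which is the only part not reducing to $L^{p(\cdot)}$-theory, the plan is a direct modular estimate. Using the elementary inequality $t^{p(x)}\leq 1+t^{q(x)}$ valid for all $t\geq 0$ and all $x\in\Omega$, together with $\mu\in L^{\infty}(\Omega)$, one bounds
\begin{align*}
	\rho_{\mathcal{H}}\!\left(\tfrac{u}{\lambda}\right)
	\leq |\Omega|+\bigl(1+\|\mu\|_{\infty}\bigr)\int_{\Omega}\left|\tfrac{u}{\lambda}\right|^{q(x)}\,\diff x
\end{align*}
for any $\lambda>0$. Since $u\in L^{q(\cdot)}(\Omega)$, the last integral can be made arbitrarily small by choosing $\lambda$ proportional to $\|u\|_{q(\cdot),\Omega}$, so $\rho_{\mathcal{H}}(u/\lambda)\leq 1$ for $\lambda$ of that order; this gives the estimate $\|u\|_{\mathcal{H}}\leq C\,\|u\|_{q(\cdot),\Omega}$ with $C$ depending only on $|\Omega|$ and $\|\mu\|_{\infty}$, proving the continuous embedding.

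No step is genuinely hard: the only mild nuisance will be being careful about the two cases $\|u\|_{\mathcal{H}}\lessgtr 1$ (where Proposition~\ref{prop_mod-nor} gives different modular-versus-norm relations) when turning the modular estimate in (vi) into a norm inequality, and verifying that the compact embeddings in (iii) and (v) transfer to $W^{1,\mathcal{H}}$ from $W^{1,p(\cdot)}$ — this is automatic from the continuity of the inclusion $W^{1,\mathcal{H}}(\Omega)\hookrightarrow W^{1,p(\cdot)}(\Omega)$ established in (i) together with the fact that the composition of a continuous and a compact operator is compact.
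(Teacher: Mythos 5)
The paper itself gives no proof of this proposition: it is quoted from Crespo-Blanco--Gasi\'nski--Harjulehto--Winkert \cite{Crespo-Blanco-Gasinski-Harjulehto-Winkert-2022}, and your overall strategy is the same standard reduction used there: from $\mathcal{H}(x,t)\geq t^{p(x)}$ one gets $\|u\|_{p(\cdot),\Omega}\leq\|u\|_{\mathcal{H}}$, hence $W^{1,\mathcal{H}}(\Omega)\hookrightarrow W^{1,p(\cdot)}(\Omega)$, and (ii)--(v) follow by composing with the known continuous/compact Sobolev and trace embeddings of $W^{1,p(\cdot)}(\Omega)$, compactness transferring through the continuous inclusion. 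Parts (i)--(v) of your argument are correct in substance (only a minor attribution slip: the critical trace embedding for $W^{1,p(\cdot)}(\Omega)$ under $p\in C_+(\overline{\Omega})\cap W^{1,\gamma}(\Omega)$, $\gamma>N$, is Fan's theorem; the Liu--Wang--Zhao result is the Musielak--Orlicz trace theorem used elsewhere in the paper).

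Part (vi), however, does not work as written. Your bound $\rho_{\mathcal{H}}(u/\lambda)\leq|\Omega|+(1+\|\mu\|_{\infty})\int_{\Omega}|u/\lambda|^{q(x)}\,\diff x$ contains the additive constant $|\Omega|$, which does not decrease as $\lambda$ grows; so whenever $|\Omega|\geq1$ you can never deduce $\rho_{\mathcal{H}}(u/\lambda)\leq1$ from it, no matter how $\lambda$ is chosen, and the claimed inequality $\|u\|_{\mathcal{H}}\leq C\|u\|_{q(\cdot),\Omega}$ does not follow. The fix is to make the constant term scale as well: for $\lambda\geq1$ use $|u/\lambda|^{p(x)}\leq\lambda^{-p^-}\bigl(1+|u|^{q(x)}\bigr)$ and $\mu(x)|u/\lambda|^{q(x)}\leq\|\mu\|_{\infty}\lambda^{-q^-}|u|^{q(x)}$, so that for $\|u\|_{q(\cdot),\Omega}\leq1$ (hence $\int_\Omega|u|^{q(x)}\,\diff x\leq1$) one gets $\rho_{\mathcal{H}}(u/\lambda)\leq\lambda^{-p^-}(|\Omega|+1)+\|\mu\|_{\infty}\lambda^{-q^-}\leq1$ for a fixed $\lambda_0$ depending only on $|\Omega|$, $\|\mu\|_{\infty}$, $p^-$, $q^-$; homogeneity of the Luxemburg norm then gives $\|u\|_{\mathcal{H}}\leq\lambda_0\|u\|_{q(\cdot),\Omega}$ for all $u$. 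Alternatively, and closest to the tools the paper provides, observe that $\mathcal{H}(x,t)\leq(1+\|\mu\|_{\infty})t^{q(x)}+1$ with $1\in L^1(\Omega)$ since $\Omega$ is bounded, i.e.\ $\mathcal{H}\prec t^{q(x)}$, and invoke Proposition \ref{prop_complete}(ii).
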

\begin{remark}\label{Rmk}
	Note that for a bounded domain $\Omega\subset \R^N$ and $\gamma>N$ it holds
		$C^{0,1}(\close)\subset \Wp{\gamma}$.
\end{remark}

Let us now suppose stronger conditions as in \eqref{H1}:
\begin{enumerate}[label=\textnormal{(H$2$)},ref=\textnormal{H$2$}]
	\item\label{H2}
	$p,q\in C(\close)$ such that $1<p(x)<N$ and $p(x) < q(x)<p^*(x)$ for all $x\in\close$ and $0 \leq \mu(\cdot) \in \Lp{\infty}$.
\end{enumerate}

Under \eqref{H2} we have the following result, see \cite[Proposition 2.18]{Crespo-Blanco-Gasinski-Harjulehto-Winkert-2022}.

\begin{proposition} \label{prop_poincare}
	Let hypothesis \eqref{H2} be satisfied. Then the following hold:
	\begin{enumerate}
		\item[\textnormal{(i)}]
		$\Wp{\mathcal{H}}\hookrightarrow \Lp{\mathcal{H}}$ is a compact embedding;
		\item[\textnormal{(ii)}]
		there exists a constant $C>0$ independent of $u$ such that
		\begin{align*}
			\|u\|_{\mathcal{H}} \leq C\|\nabla u\|_{\mathcal{H}}\quad\text{for all
			} u \in \Wpzero{\mathcal{H}}.
		\end{align*}
	\end{enumerate}
\end{proposition}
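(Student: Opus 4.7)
My plan for proving Proposition \ref{prop_poincare} is to obtain (i) by factoring through the variable exponent Lebesgue space $\Lp{q(\cdot)}$, and then to derive (ii) from (i) via a contradiction argument using a Poincar\'{e} inequality already available on the simpler space $\Wpzero{p(\cdot)}$.

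For part (i), observe that hypothesis \eqref{H2} makes the exponent $q$ strictly subcritical, that is, $q(x) < p^*(x)$ for all $x \in \close$. Applying Proposition \ref{proposition_embeddings}(iii) with $r = q$ therefore produces the compact embedding
\[
\Wp{\mathcal{H}} \hookrightarrow \Lp{q(\cdot)}.
\]
Since $\mu \in \Linf$, Proposition \ref{proposition_embeddings}(vi) yields the continuous embedding
\[
\Lp{q(\cdot)} \hookrightarrow \Lp{\mathcal{H}}.
\]
The composition of a compact operator with a continuous operator is again compact, giving (i) immediately.

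For part (ii), suppose the Poincar\'{e}-type inequality fails. Then there exists a sequence $\{u_n\} \subset \Wpzero{\mathcal{H}}$ with $\|u_n\|_{\mathcal{H}} = 1$ and $\|\nabla u_n\|_{\mathcal{H}} \to 0$. In particular, $\{u_n\}$ is bounded in $\Wp{\mathcal{H}}$, so applying (i) along a subsequence yields $u_n \to u$ in $\Lp{\mathcal{H}}$ with $\|u\|_{\mathcal{H}} = 1$. The continuous embedding $\Lp{\mathcal{H}} \hookrightarrow \Lp{p(\cdot)}$ from Proposition \ref{proposition_embeddings}(i) then gives $u_n \to u$ in $\Lp{p(\cdot)}$ as well as $\nabla u_n \to 0$ in $\Lp{p(\cdot)}$. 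Hence $\{u_n\}$ is Cauchy in $\Wp{p(\cdot)}$, its limit $u$ has vanishing weak gradient, and $u$ is therefore constant on the connected domain $\Omega$. Since $\Wpzero{p(\cdot)}$ is a closed subspace of $\Wp{p(\cdot)}$ and $u_n \in \Wpzero{\mathcal{H}} \subset \Wpzero{p(\cdot)}$, the limit $u$ belongs to $\Wpzero{p(\cdot)}$, and the classical Poincar\'{e} inequality on $\Wpzero{p(\cdot)}$ forces $u \equiv 0$, contradicting $\|u\|_{\mathcal{H}} = 1$.

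The only real point to verify is that a constant function lying in $\Wpzero{p(\cdot)}$ must vanish; this is a direct consequence of the standard Poincar\'{e} inequality for the variable exponent Sobolev space with zero boundary values on a bounded domain, which is available under the mild continuity assumption $p \in C(\close)$ supplied by \eqref{H2}. All remaining steps are straightforward applications of the embedding results recorded in Proposition \ref{proposition_embeddings} combined with standard functional-analytic facts, so I do not anticipate any genuine technical obstacle.
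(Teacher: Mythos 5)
Your argument is correct: part (i) is exactly the factorization $\Wp{\mathcal{H}}\hookrightarrow\hookrightarrow \Lp{q(\cdot)}\hookrightarrow \Lp{\mathcal{H}}$ that Proposition \ref{proposition_embeddings}(iii) and (vi) permit under \eqref{H2}, and part (ii) is the standard compactness--contradiction argument, whose final step (a constant function in $\Wpzero{p(\cdot)}$ must vanish) is indeed covered by the known Poincar\'e inequality for $W^{1,p(\cdot)}_0(\Omega)$ with continuous exponent on a bounded Lipschitz domain, or even more simply by $\Wpzero{p(\cdot)}\subset W^{1,p^-}_0(\Omega)$ and the classical constant-exponent inequality. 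The paper itself gives no proof but quotes \cite[Proposition 2.18]{Crespo-Blanco-Gasinski-Harjulehto-Winkert-2022}, and your route is essentially the argument used there.
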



Next we mention the following lemma concerning the geometric convergence of sequences of numbers will be the key to our arguments to obtain the boundedness of solutions via the De Giorgi iteration. The proof of the lemma can be found in the paper of Ho-Sim \cite[Lemma 4.3]{Ho-Sim-2015}
\begin{lemma}\label{leRecur}
	Let $\{Z_n\}, n=0,1,2,\ldots,$ be a sequence of positive numbers, satisfying the recursion inequality
	\begin{align*}
		Z_{n+1} \leq K b^n \left (Z_n^{1+\mu_1}+ Z_n^{1+\mu_2} \right ) , \quad n=0,1,2, \ldots,
	\end{align*}
	for some $b>1$, $K>0$ and $\mu_2\geq \mu_1>0$. If
	\begin{align*}
		Z_0 \leq \min \left(1,(2K)^{-\frac{1}{\mu_1}} b^{-\frac{1}{\mu_1^2}}\right)
	\end{align*}
	or
	\begin{align*}
		Z_0 \leq \min  \left((2K)^{-\frac{1}{\mu_1}} b^{-\frac{1}{\mu_1^2}}, (2K)^{-\frac{1}{\mu_2}}b^{-\frac{1}{\mu_1 \mu_2}-\frac{\mu_2-\mu_1}{\mu_2^2}}\right),
	\end{align*}
	then $Z_n \leq 1$ for some $n \in \N \cup \{0\}$. Moreover,
	\begin{align*}
		Z_n \leq \min \left(1,(2K)^{-\frac{1}{\mu_1}} b^{-\frac{1}{\mu_1^2}} b^{-\frac{n}{\mu_1}}\right), \quad \text{ for all }n \geq n_0,
	\end{align*}
	where $n_0$ is the smallest $n \in \N \cup \{0\}$ satisfying $Z_n \leq 1$. In particular, $Z_n \to 0$ as $n \to \infty$.
\end{lemma}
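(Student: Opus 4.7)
The plan is to reduce the two-exponent recursion to a standard single-exponent one in two regimes ($Z_n\le 1$ and $Z_n>1$), glue them together, and apply a single-exponent De Giorgi-type iteration lemma.

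First I would state and prove an auxiliary fact: if $\{W_n\}$ satisfies $W_{n+1}\le Cb^n W_n^{1+\sigma}$ with $C,\sigma>0$ and $b>1$, and if $W_0\le C^{-1/\sigma}b^{-1/\sigma^2}$, then by direct induction one obtains $W_n\le C^{-1/\sigma}b^{-1/\sigma^2}\,b^{-n/\sigma}$ for every $n\ge 0$. The inductive step is routine: substitute the bound for $W_n$ into the recursion, expand $(W_n^{1+\sigma})$, and collect powers of $b$ so that the exponent $n-(1+\sigma)(1/\sigma^2+n/\sigma)$ simplifies to $-1/\sigma^2-(n+1)/\sigma$.

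For the first alternative $Z_0\le\min\bigl(1,\,(2K)^{-1/\mu_1}b^{-1/\mu_1^2}\bigr)$, note that whenever $Z_n\le 1$ we have $Z_n^{1+\mu_2}\le Z_n^{1+\mu_1}$, so the recursion simplifies to $Z_{n+1}\le 2Kb^n Z_n^{1+\mu_1}$. Applying the auxiliary fact with $C=2K$, $\sigma=\mu_1$, one obtains $Z_n\le (2K)^{-1/\mu_1}b^{-1/\mu_1^2}\,b^{-n/\mu_1}\le 1$ for all $n$, giving the conclusion with $n_0=0$.

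For the second alternative, $Z_0$ may exceed $1$. While $Z_n\ge 1$, the opposite inequality $Z_n^{1+\mu_1}\le Z_n^{1+\mu_2}$ holds, and the recursion simplifies to $Z_{n+1}\le 2Kb^n Z_n^{1+\mu_2}$. I would prove by induction along this phase that $Z_n$ decreases at geometric rate controlled by $\mu_2$, with a bound of the form $Z_n\le \gamma \,b^{-n/\mu_2}$ for suitable $\gamma$. Let $n_0$ denote the first index where $Z_{n_0}\le 1$. For $n\ge n_0$ I would pass to the shifted sequence $\widetilde Z_n:=Z_{n+n_0}$, which satisfies $\widetilde Z_{n+1}\le (Kb^{n_0})b^n(\widetilde Z_n^{1+\mu_1}+\widetilde Z_n^{1+\mu_2})$, i.e.\ the original recursion with $K$ replaced by $Kb^{n_0}$. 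The second hypothesized bound on $Z_0$, namely $(2K)^{-1/\mu_2}b^{-1/(\mu_1\mu_2)-(\mu_2-\mu_1)/\mu_2^2}$, is precisely calibrated so that $Z_{n_0}\le (2Kb^{n_0})^{-1/\mu_1}b^{-1/\mu_1^2}$, which is exactly the Phase-1 entry condition for $\widetilde Z_0$. Applying the first alternative to $\{\widetilde Z_n\}$ then produces $Z_n=\widetilde Z_{n-n_0}\le (2K)^{-1/\mu_1}b^{-1/\mu_1^2}\,b^{-n/\mu_1}$ for all $n\ge n_0$, as claimed. The final statement $Z_n\to 0$ is immediate from $b>1$.

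The main obstacle is the bookkeeping of exponents in the gluing step: one must verify that the seemingly unusual exponent $-1/(\mu_1\mu_2)-(\mu_2-\mu_1)/\mu_2^2$ on $b$ is exactly what is required to make the Phase-2 exit bound on $Z_{n_0}$ compatible with the shifted Phase-1 entry condition, across an \emph{arbitrary} first-exit index $n_0$. This is where the $\mu_2$-phase analysis must be done with care, because the factor $b^{n_0}$ absorbed into the shifted constant has to cancel against the decay obtained during the $\mu_2$-phase; getting this cancellation to work uniformly in $n_0$ pins down the precise form of the second bound.
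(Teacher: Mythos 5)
Your two-phase strategy is exactly the argument behind this lemma: the paper itself gives no proof but quotes it from Ho--Sim \cite{Ho-Sim-2015}, and that proof is the reduction you describe -- a $\mu_2$-regime while $Z_n>1$, a $\mu_1$-regime once $Z_n\le 1$, glued at the first exit index $n_0$ with the factor $b^{n_0}$ absorbed into the constant. Your auxiliary single-exponent induction is correct, and the bookkeeping you deferred does close, so the outline is sound; but two points you left implicit need to be written out, and the second is the actual heart of the proof rather than routine calibration.

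(i) In Phase 1 you conclude $Z_n\le (2K)^{-1/\mu_1}b^{-1/\mu_1^2}b^{-n/\mu_1}\le 1$; the last inequality is not automatic, since $A:=(2K)^{-1/\mu_1}b^{-1/\mu_1^2}$ may well exceed $1$ (small $K$). One must carry $Z_n\le 1$ as a second inductive hypothesis (otherwise the simplified recursion $Z_{n+1}\le 2Kb^nZ_n^{1+\mu_1}$ is not available at the next step); it persists because $Z_{n+1}\le 2Kb^nZ_n^{\mu_1}Z_n\le 2Kb^n\bigl(Ab^{-n/\mu_1}\bigr)^{\mu_1}=b^{-1/\mu_1}<1$, and this also rules out re-entering the regime $Z_n>1$ after $n_0$. (ii) The compatibility at $n_0$ is \emph{not} uniform in an arbitrary exit index, contrary to what your last paragraph suggests: Phase 2 gives $Z_n\le Bb^{-n/\mu_2}$ up to $n_0$ (with $B$ your second constant), but since $b^{-n_0/\mu_2}$ decays more slowly than $b^{-n_0/\mu_1}$, this bound alone can never meet the shifted entry condition $Z_{n_0}\le Ab^{-n_0/\mu_1}$ for large $n_0$. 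What saves the argument is that $n_0$ is automatically bounded by the exit condition itself: $1<Z_{n_0-1}\le Bb^{-(n_0-1)/\mu_2}$ forces $b^{n_0}<bB^{\mu_2}$. Inserting this into the required inequality $Bb^{n_0(\mu_2-\mu_1)/(\mu_1\mu_2)}\le A$ reduces it to $B^{\mu_2/\mu_1}b^{(\mu_2-\mu_1)/(\mu_1\mu_2)}\le A$, i.e. to $B\le (2K)^{-1/\mu_2}b^{-\frac{1}{\mu_1\mu_2}-\frac{\mu_2-\mu_1}{\mu_2^2}}$, which is precisely your hypothesized second constant, with equality. With these two items made explicit (plus the trivial case $n_0=0$, where $Z_0\le A$ is the hypothesis), your proof is complete and coincides with the cited one.
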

Furthermore, in the next sections we frequently use Young's inequality of the form
\begin{equation*}
	ab\leq \frac{1}{\eta}\eps a^\eta+\frac{\eta-1}{\eta}\eps^{-\frac{1}{\eta-1}}b^{\frac{\eta}{\eta-1}}\leq \eps a^\eta+\eps^{-\frac{1}{\eta-1}}b^{\frac{\eta}{\eta-1}} \quad \text{ for all } a,b\geq0, \, \eps>0, \,\eta>1.
\end{equation*}

Let us now fix our notation. We write $\N_0:=\N \cup \{0\}$ and for a real number $t>1$ we denote by $t':=\frac{t}{t-1}$ the conjugate number of $t$. For a measurable function $v\colon\Omega\to\R$ we set
\begin{align*}
	v_+ := \max \{v,0\}
	\quad\text{and}\quad
	v_- := \max \{-v,0\}.
\end{align*}
By $|E|$ we denote the $N$-dimensional Lebesgue measure of $E\subset\R^N$ and by $|E|_\sigma$ the $(N-1)$-dimensional surface measure of $E\subset\R^N$. For $1\leq\rho\leq\infty$, the space $L^\rho(E)$ is the usual Lebesgue space with norm $\|\cdot\|_{\rho,E}$.

\section{New embedding results for Musielak-Orlicz Sobolev spaces $\Wp{\mathcal{H}}$}\label{Section-3}

In this section, we want to discuss new embedding results for the space $\Wp{\mathcal{H}}$ into an Musielak-Orlicz space $\Lp{\ph}$ for a suitable $\Phi$-function $\ph$. These results extend those in Proposition \ref{proposition_embeddings}.

First, we are going to introduce the Sobolev conjugate function of $\mathcal{H}$. We define for all $x\in \Omega$
\begin{align*}
	\mathcal{H}_1(x,t)=
	\begin{cases}
		t \mathcal{H}(x,1) &\text{if }0 \leq t \leq 1,\\
		\mathcal{H}(x,t)&\text{if } t > 1.
	\end{cases}
\end{align*}

It is well known, since $\Omega$ is a bounded domain, that $\Lp{\mathcal{H}}=\Lp{\mathcal{H}_1}$ and $\Wp{\mathcal{H}}=\Wp{\mathcal{H}_1}$, see Musielak \cite{Musielak-1983}. Hence, for embedding results of $\Wp{\mathcal{H}}$ we
may use $\mathcal{H}_1$ instead of $\mathcal{H}$. For simplification, we write $\mathcal{H}$ instead of $\mathcal{H}_1$.

We start with the following definition.

\begin{definition}\label{def-conjugate-function}
	We denote by $\mathcal{H}^{-1}(x,\cdot)\colon [0,\infty) \to [0,\infty)$ for all $x \in \overline{\Omega}$ the inverse function of $\mathcal{H}(x,\cdot)$. Furthermore, we define $\mathcal{H}_*^{-1}\colon \overline{\Omega} \times [0,\infty) \to [0,\infty)$ by
	\begin{align}\label{form-H_*^-1}
		\mathcal{H}^{-1}_*(x,s)=\int^s_0 \frac{\mathcal{H}^{-1}(x,\tau)}{\tau^{\frac{N+1}{N}}}\,\diff \tau \quad\text{for all }(x,s) \in \overline{\Omega}\times [0,\infty),
	\end{align}
	where $\mathcal{H}_*\colon (x,t) \in \overline{\Omega}\times[0,\infty) \to s\in [0,\infty)$ is such that $\mathcal{H}^{-1}_*(x,s)=t$. The function $\mathcal{H}_*$ is called the Sobolev conjugate function of $\mathcal{H}$.
\end{definition}

In order to have further properties on $\Wp{\mathcal{H}}$ and $\Wpzero{\mathcal{H}}$, we suppose the following stronger assumptions as those in \eqref{H1} and \eqref{H2}.

\begin{enumerate}[label=\textnormal{(H$3$)},ref=\textnormal{H$3$}]
	\item\label{H3}
	$p,q\in C^{0,1}(\close)$ such that $1<p(x)<q(x)<N$ for all $x\in\close$, $\left(\frac{q}{p}\right)^+<1+\frac{1}{N}$ and $0 \leq \mu(\cdot)\in C^{0,1}(\close)$.
\end{enumerate}

The next proposition, obtained in \cite[Proposition 2.21]{Crespo-Blanco-Gasinski-Harjulehto-Winkert-2022}, provides fundamental embedding results on $\Wp{\mathcal{H}}$ and $\Wpzero{\mathcal{H}}$.

\begin{proposition}\label{prop_dom-emb}
	Let hypotheses \eqref{H3} be satisfied. Then the following hold:
	\begin{enumerate}
		\item[\textnormal{(i)}]
			$\Wp{\mathcal{H}}\hookrightarrow \Lp{\mathcal{H}_*}$ continuously.
		\item[\textnormal{(ii)}]
			Let $\mathcal{K}\colon\Omega \times [0,\infty)\to [0,\infty)$ be continuous such that $\mathcal{K} \in N(\Omega)$ and $\mathcal{K}\ll \mathcal{H}_*$, then $\Wp{\mathcal{H}}\hookrightarrow \Lp{\mathcal{K}}$ compactly.
		\item[\textnormal{(iii})]
			It holds $\mathcal{H} \ll \mathcal{H}_*$ and in particular, $\Wp{\mathcal{H}}\hookrightarrow \Lp{\mathcal{H}}$ compactly.
			\item[\textnormal{(iv)}]
			There exists a constant $C>0$ independent of $u$ such that
			\begin{align}\label{Poincare}
				\|u\|_{\mathcal{H}} \leq C\|\nabla u\|_{\mathcal{H}}\quad\text{for all
				} u \in \Wpzero{\mathcal{H}}.
			\end{align}
		\end{enumerate}
\end{proposition}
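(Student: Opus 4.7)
The plan is to deduce (i) from Fan's abstract embedding theorem for Musielak--Orlicz Sobolev spaces~\cite{Fan-2012}, and then derive (ii)--(iv) as standard consequences of (i). For~(i), I would verify Fan's hypotheses for $\mathcal{H}(x,t)=t^{p(x)}+\mu(x)t^{q(x)}$ one by one: the $(\Delta_2)$-condition is immediate from $\mathcal{H}(x,2t)\leq 2^{q^+}\mathcal{H}(x,t)$; local integrability holds since $\mu\in L^\infty(\Omega)$ under~\eqref{H3} (in fact $\mu\in C^{0,1}(\close)$); and the Hölder-in-$x$ continuity assumption that controls the integrand in~\eqref{form-H_*^-1} uniformly is furnished by the Lipschitz regularity of $p,q,\mu$ together with the quantitative growth gap $(q/p)^+<1+\frac{1}{N}$. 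With these, Fan's theorem delivers the continuous embedding $\Wp{\mathcal{H}}\hookrightarrow\Lp{\mathcal{H}_*}$.

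For~(iii), the key is a pointwise lower bound $\mathcal{H}_*(x,t)\geq C_1\,t^{p^*(x)}$ valid for all $x\in\close$ and $t$ sufficiently large. Since $\mathcal{H}(x,s)\geq s^{p(x)}$, one has $\mathcal{H}^{-1}(x,\tau)\leq\tau^{1/p(x)}$; plugging this into~\eqref{form-H_*^-1} and using the identity $\frac{1}{p(x)}-\frac{N+1}{N}+1=\frac{1}{p^*(x)}$ yields $\mathcal{H}_*^{-1}(x,s)\leq C_2+p^*(x)\,s^{1/p^*(x)}$, which inverts to the claimed bound with a constant uniform in $x$. Now the hypothesis $(q/p)^+<1+\frac{1}{N}$ together with $1<p^-\leq p(x)<N$ produces a uniform spectral gap $\inf_{x\in\close}(p^*(x)-q(x))\geq\delta>0$, whereas $\mathcal{H}(x,kt)\leq k^{q^+}(1+\|\mu\|_\infty)(t^{p^+}+t^{q^+})$ for $t\geq 1$. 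Hence $\mathcal{H}(x,kt)/\mathcal{H}_*(x,t)\to 0$ as $t\to\infty$ uniformly in $x$, giving $\mathcal{H}\ll\mathcal{H}_*$; the compact embedding $\Wp{\mathcal{H}}\hookrightarrow\Lp{\mathcal{H}}$ then reduces to the case $\mathcal{K}=\mathcal{H}$ of~(ii).

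For~(ii), any bounded sequence $\{u_n\}\subset\Wp{\mathcal{H}}$ is bounded in $\Lp{\mathcal{H}_*}$ by~(i); applying Proposition~\ref{proposition_embeddings}\,(iii) with $r=p^-<p^*(\cdot)$ gives compactness of $\Wp{\mathcal{H}}\hookrightarrow L^{p^-}(\Omega)$, hence a subsequence converges in $L^{p^-}(\Omega)$ and (after further extraction) pointwise a.e.\ to some $u$. For every $\varepsilon,\lambda>0$, the relation $\mathcal{K}\ll\mathcal{H}_*$ furnishes $M>0$ such that $\mathcal{K}(x,\lambda s)\leq\varepsilon\,\mathcal{H}_*(x,s)$ whenever $s\geq M$; splitting on $\{|u_n-u|\leq M\}$ and its complement and using the uniform bound on $\rho_{\mathcal{H}_*}(u_n-u)$ shows that $\{\mathcal{K}(\cdot,\lambda|u_n-u|)\}$ is equi-integrable, whence Vitali's theorem yields $\rho_{\mathcal{K}}(\lambda(u_n-u))\to 0$ for every $\lambda>0$, i.e.\ $u_n\to u$ in $\Lp{\mathcal{K}}$.

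Finally, for~(iv) I would run the standard compactness--contradiction argument: supposing no such $C$ exists, normalize to obtain $\{u_n\}\subset\Wpzero{\mathcal{H}}$ with $\|u_n\|_{\mathcal{H}}=1$ and $\|\nabla u_n\|_{\mathcal{H}}\to 0$, extract a weakly convergent subsequence $u_n\weak u$ in $\Wpzero{\mathcal{H}}$ by reflexivity, upgrade to $u_n\to u$ in $\Lp{\mathcal{H}}$ via~(iii), note that $\nabla u_n\to 0$ strongly in $\Lp{\mathcal{H}}$ forces $\nabla u=0$, and conclude $u\equiv 0$ from the inclusion $\Wpzero{\mathcal{H}}\subset\Wpzero{p^-}$ combined with the classical Poincaré inequality on $\Wpzero{p^-}$, contradicting $\|u\|_{\mathcal{H}}=\lim\|u_n\|_{\mathcal{H}}=1$. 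The main obstacle throughout is step~(i): Fan's abstract theorem carries several technical monotonicity and uniform growth requirements on the generalized $N$-function, and checking them for our non-autonomous $\mathcal{H}$ requires carefully tracking how the Lipschitz regularity of $p,q,\mu$ and the gap $(q/p)^+<1+\frac{1}{N}$ jointly control $\mathcal{H}^{-1}(x,\cdot)$ uniformly in $x$; once~(i) is secured, parts~(ii)--(iv) follow by well-established techniques.
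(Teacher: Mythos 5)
Your route is essentially the one the paper relies on: the paper itself gives no proof of Proposition \ref{prop_dom-emb} but cites \cite[Proposition 2.21]{Crespo-Blanco-Gasinski-Harjulehto-Winkert-2022}, whose proof of (i) is exactly the verification of Fan's hypotheses \cite{Fan-2012} that you outline (the remark following the proposition, about really needing $q^+<N$ and relaxing $\frac{q^+}{p^-}<1+\frac1N$ to $\left(\frac qp\right)^+<1+\frac1N$, concerns precisely that verification). Where you differ is in (ii)--(iii): the cited source gets these directly from Fan's compact-embedding theorem, whereas you re-derive (ii) by hand via a.e.\ convergence plus equi-integrability and Vitali, and (iii) from an explicit lower bound for $\mathcal{H}_*$; your (iv) is the standard compactness--contradiction argument, as in the source. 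This buys self-containedness at the cost of a few details you must get right.

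Two written steps need small repairs. In (iii), after establishing $\mathcal{H}_*(x,t)\geq C_1 t^{p^*(x)}$ you bound $\mathcal{H}(x,kt)\lesssim t^{p^+}+t^{q^+}$ and conclude uniform decay of the quotient; but comparing $t^{q^+}$ against $t^{p^*(x)}$ does not work, since \eqref{H3} does not force $q^+<(p^*)^-$ (with variable exponents one can have $q^+>p^*(x)$ at other points, e.g.\ $p\approx 9,\ q\approx 9.5$ near one point and $p\approx 2,\ q\approx 2.1$ near another for $N=10$). Keep the pointwise exponent instead: for $t\geq 1$, $\mathcal{H}(x,kt)\leq 2\max\{1,k\}^{q^+}(1+\|\mu\|_\infty)\,t^{q(x)}$, and then the uniform gap $p^*(x)-q(x)\geq\delta>0$ that you yourself derived gives $\mathcal{H}(x,kt)/\mathcal{H}_*(x,t)\leq C t^{-\delta}\to 0$ uniformly. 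In (ii), the norm bound $\|u_n-u\|_{\mathcal{H}_*,\Omega}\leq M'$ does not by itself bound the modular $\rho_{\mathcal{H}_*}(u_n-u)$ that your splitting uses; either rescale (apply the definition of $\mathcal{K}\ll\mathcal{H}_*$ with $k=\lambda M'$ to $t=|u_n-u|/M'$, whose $\mathcal{H}_*$-modular is at most $1$) or verify a $\Delta_2$-type property of $\mathcal{H}_*$. With these one-line fixes, and granting Fan's theorem for (i) exactly as the paper does, your argument is complete.
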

It is worth pointing out that in \cite[Proposition 2.21]{Crespo-Blanco-Gasinski-Harjulehto-Winkert-2022} the authors did not assume $q^+<N$ as well as they used a stronger condition of $\frac{q(\cdot)}{p(\cdot)}$, namely $\frac{q^+}{p^-}<1+\frac{1}{N}$. However, from the proof of \cite[Proposition 2.21]{Crespo-Blanco-Gasinski-Harjulehto-Winkert-2022}, we can easily see that it indeed needs the condition $q^+<N$ and while one can relax the condition of $\frac{q(\cdot)}{p(\cdot)}$ as mentioned above. Note that thanks to the Poincar\'e-type inequality \eqref{Poincare}, the norm $\|\nabla\cdot\|_{\mathcal{H}}$ is an equivalent norm on $\Wpzero{\mathcal{H}}$.

Regarding the critical boundary trace embedding, we have the following proposition.

\begin{proposition}\label{prop_Gen-C-T-E}
	Under hypotheses \eqref{H3} it holds that
	\begin{align*}
	\Wp{\mathcal{H}}\hookrightarrow L^{\mathcal{H}_*^{\frac{N-1}{N}}}(\Gamma)
	\end{align*}
	continuously, where
	\begin{align*}
		\mathcal{H}_*^{\frac{N-1}{N}}(x,t):=\left[\mathcal{H}_*(x,t)\right]^{\frac{N-1}{N}}
		\quad \text{for all }(x,t)\in \overline{\Omega}\times [0,\infty).
	\end{align*}
\end{proposition}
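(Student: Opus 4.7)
The plan is to obtain the stated boundary embedding as a direct consequence of the general trace embedding theorem for Musielak-Orlicz Sobolev spaces due to Liu-Wang-Zhao \cite{Liu-Wang-Zhao-2016}, used in tandem with the interior Sobolev embedding $\Wp{\mathcal{H}}\hookrightarrow \Lp{\mathcal{H}_*}$ already furnished by Proposition \ref{prop_dom-emb}(i).

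First, under hypothesis \eqref{H3} one verifies that $\mathcal{H}$ satisfies all structural prerequisites of the Liu-Wang-Zhao trace theorem: $\mathcal{H}$ is a locally integrable generalized $N$-function (with $\inf_{x\in\Omega}\mathcal{H}(x,1)>0$), it fulfills the $\Delta_2$-type growth condition (which is automatic from $p^-\le q^+<N<\infty$), and the Lipschitz regularity of the data $p,q,\mu\in C^{0,1}(\close)$ supplies the uniform regularity of $\mathcal{H}(x,t)$ in the spatial variable required for boundary traces. The condition $\left(q/p\right)^+<1+\tfrac{1}{N}$ together with $q^+<N$ ensures that the Sobolev conjugate $\mathcal{H}_*$ from Definition \ref{def-conjugate-function} is well defined and grows sufficiently fast so that Proposition \ref{prop_dom-emb}(i) applies; this $\Lp{\mathcal{H}_*}$ embedding is precisely the Sobolev-type input consumed by the trace theorem.

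Next, one invokes the Liu-Wang-Zhao trace embedding theorem, whose conclusion, applied to our $\mathcal{H}$, upgrades the interior embedding to the boundary trace $\Wp{\mathcal{H}} \hookrightarrow L^{\Psi}(\Gamma)$ with $\Psi(x,t)=[\mathcal{H}_*(x,t)]^{(N-1)/N}$. The exponent $(N-1)/N$ is the natural dimensional factor in the passage from $N$-dimensional to $(N-1)$-dimensional integrability, mirroring the classical variable-exponent relation $p_*(\cdot)=\tfrac{N-1}{N}p^*(\cdot)$ obtained from the definitions in \eqref{critical-eponents}. The identification $\Psi=\mathcal{H}_*^{\frac{N-1}{N}}$ is then immediate from the definition given in the statement.

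The main technical point that requires care is verifying that $\mathcal{H}_*^{\frac{N-1}{N}}$ is itself a legitimate generalized $\Phi$-function on $\overline{\Omega}$, so that the target Musielak-Orlicz space $L^{\mathcal{H}_*^{\frac{N-1}{N}}}(\Gamma)$ is well defined. Measurability in $x$ and the conditions $\mathcal{H}_*^{\frac{N-1}{N}}(x,0)=0$, $\mathcal{H}_*^{\frac{N-1}{N}}(x,t)>0$ for $t>0$ are clear. Convexity in $t$, however, is not automatic since we are raising a convex function to a power strictly less than one; I expect to obtain it by arguing on the inverse, using the integral representation \eqref{form-H_*^-1}: since $\mathcal{H}^{-1}(x,\tau)$ behaves like a power less than $1/p^-$ for large $\tau$, the integrand $\mathcal{H}^{-1}(x,\tau)/\tau^{(N+1)/N}$ is integrable and $\mathcal{H}_*^{-1}(x,\cdot)$ grows like a sufficiently concave function, so that the map $s\mapsto[\mathcal{H}_*^{-1}(x,s^{N/(N-1)})]^{-1}$ is concave, which yields the desired convexity of $t\mapsto[\mathcal{H}_*(x,t)]^{(N-1)/N}$. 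Once this is established, the trace theorem gives the embedding claimed.
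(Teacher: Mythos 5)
Your proposal follows essentially the same route as the paper: the paper's proof consists precisely of invoking Theorem 4.2 of Liu-Wang-Zhao \cite{Liu-Wang-Zhao-2016}, whose hypotheses on $\mathcal{H}$ are checked by repeating the verification carried out in \cite[Proposition 2.19]{Crespo-Blanco-Gasinski-Harjulehto-Winkert-2022}, exactly as you outline. The issues you flag separately (well-definedness and convexity of $\mathcal{H}_*^{\frac{N-1}{N}}$, and the role of the interior embedding of Proposition \ref{prop_dom-emb}) are already subsumed in the statement of that trace theorem, so your extra sketch, though slightly garbled in the concavity step, is not needed for the argument to close.
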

The proof of this result can be easily obtained by repeating the argument in the proof of \cite[Proposition 2.19]{Crespo-Blanco-Gasinski-Harjulehto-Winkert-2022} to verify the conditions of Theorem 4.2 in Liu-Wang-Zhao \cite{Liu-Wang-Zhao-2016}. We leave the details for the reader.

In the following, we will give an exact form of the critical terms, which will help us to study double phase problems with a larger class of nonlinear terms than in previous works.

We have the following proposition.
\begin{proposition}\label{prop_C-do-E}
	Let hypotheses  \eqref{H3} be satisfied. Then we have the continuous embedding
	\begin{align}\label{embedding-critical-domain}
		W^{1,\mathcal{H}}(\Omega)\hookrightarrow L^{\mathcal{G}^*}(\Omega),
	\end{align}
	where $\mathcal{G}^*$ is given by
	\begin{align}\label{G^*}
		\mathcal{G}^*(x,t):=t^{p^*(x)}+\mu(x)^{\frac{q^*(x)}{q(x)}}t^{q^*(x)}
		\quad \text{for }(x,t)\in \overline{\Omega}\times [0,\infty).
	\end{align}
\end{proposition}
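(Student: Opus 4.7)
The plan is to chain the known embedding $W^{1,\mathcal{H}}(\Omega) \hookrightarrow L^{\mathcal{H}_*}(\Omega)$ from Proposition~\ref{prop_dom-emb}(i) with a continuous inclusion $L^{\mathcal{H}_*}(\Omega) \hookrightarrow L^{\mathcal{G}^*}(\Omega)$. By Proposition~\ref{prop_complete}(ii), the latter follows once we verify $\mathcal{G}^*\prec \mathcal{H}_*$, i.e., we produce constants $C_1,C_2>0$ and $h\in L^1(\Omega)$ such that
\begin{align*}
	\mathcal{G}^*(x,t)\leq C_1\,\mathcal{H}_*(x,C_2 t)+h(x)\qquad \text{for a.\,a.\,}x\in\Omega \text{ and all } t\geq0.
\end{align*}
All the work thus lies in producing a pointwise lower bound on $\mathcal{H}_*$ that reproduces, up to multiplicative constants, both terms in $\mathcal{G}^*$.

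The first term is easy. Since $\mathcal{H}(x,t)\geq t^{p(x)}$, inverting gives $\mathcal{H}^{-1}(x,\tau)\leq \tau^{1/p(x)}$ (a direct check using the definition of $\mathcal{H}_1$ shows the same upper bound survives after the modification near zero, because $\mathcal{H}(x,1)\geq 1$). Plugging this into \eqref{form-H_*^-1} and using $p(x)<N$ yields
\begin{align*}
	\mathcal{H}_*^{-1}(x,s)\leq \int_0^s \tau^{1/p(x)-1-1/N}\,\diff\tau = p^*(x)\,s^{1/p^*(x)}.
\end{align*}
Inverting produces $\mathcal{H}_*(x,t)\geq c_1\,t^{p^*(x)}$ for all $x\in\Omega$ and $t\geq0$, with $c_1:=\inf_{x\in\close}(p^*(x))^{-p^*(x)}>0$.

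For the weighted $q^*$-term, I would use the alternative structural inequality $\mathcal{H}(x,t)\geq \mu(x)\,t^{q(x)}$ to obtain $\mathcal{H}^{-1}(x,\tau)\leq (\tau/\mu(x))^{1/q(x)}$ at points where $\mu(x)>0$. This bound is only effective past the modification threshold $\mathcal{H}(x,1)=1+\mu(x)$, so I split the defining integral of $\mathcal{H}_*^{-1}$ at that point. The contribution of the linear part over $[0,\mathcal{H}(x,1)]$ is uniformly bounded by $C_0:=N/(N-1)$ (using $\mathcal{H}(x,1)\geq 1$), while the tail over $[\mathcal{H}(x,1),s]$ is bounded, thanks to $q(x)<N$, by $q^*(x)\,\mu(x)^{-1/q(x)}s^{1/q^*(x)}$. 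Consequently,
\begin{align*}
	\mathcal{H}_*^{-1}(x,s)\leq C_0+q^*(x)\,\mu(x)^{-1/q(x)}\,s^{1/q^*(x)},
\end{align*}
and solving for $s$ in the regime $t\geq 2C_0$ (which is a uniform threshold, independent of $\mu$) produces $\mathcal{H}_*(x,t)\geq c_2\,\mu(x)^{q^*(x)/q(x)}\,t^{q^*(x)}$ with a uniform $c_2>0$. The inequality is trivial where $\mu(x)=0$.

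Combining both lower bounds, for all $x\in\Omega$ and $t\geq T_0:=2C_0$ one has $\mathcal{H}_*(x,t)\geq \tfrac12\min\{c_1,c_2\}\,\mathcal{G}^*(x,t)$, whereas for $t<T_0$ the quantity $\mathcal{G}^*(x,t)$ is bounded uniformly in $x$ and $t$ by the boundedness of $\mu$ and of $p^*,q^*$ on $\close$. This delivers the desired inequality $\mathcal{G}^*(x,t)\leq C_1\,\mathcal{H}_*(x,t)+h$ with $h$ a constant, and Proposition~\ref{prop_complete}(ii) completes the proof of \eqref{embedding-critical-domain}. The main obstacle is keeping all the constants uniform in $x$ during the inversion step—in particular, handling the piecewise definition of $\mathcal{H}_1$ and arranging the argument so that the threshold $T_0$ above which the $\mu^{q^*/q}t^{q^*}$-bound is valid does not blow up as $\mu(x)\to 0$.
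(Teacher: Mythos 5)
Your proof is correct and follows essentially the same route as the paper: both derive the pointwise lower bounds $\mathcal{H}_*(x,t)\gtrsim t^{p^*(x)}$ and $\mathcal{H}_*(x,t)\gtrsim \mu(x)^{q^*(x)/q(x)}t^{q^*(x)}$ by inverting $\mathcal{H}\geq t^{p(x)}$ and $\mathcal{H}\geq\mu(x)t^{q(x)}$ in the formula \eqref{form-H_*^-1}, and then chain Proposition \ref{prop_dom-emb}(i) with Proposition \ref{prop_complete}(ii). The only (harmless) deviation is your splitting of the integral at $\mathcal{H}(x,1)$ and the additive constant $h$; the paper instead uses the pointwise inequality directly, which also works since the modified function $\mathcal{H}_1$ still dominates $t^{p(x)}$ and $\mu(x)t^{q(x)}$ on $[0,1]$.
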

\begin{proof}
	First, we are going to prove that
	\begin{align}\label{Est-H^*}
		\mathcal{G}^*(x,t)\leq 2\left[\left(q^*\right)^{q^*}\right]^+\mathcal{H}_*(x,t)\quad\text{for all }(x,t)\in \overline{\Omega}\times [0,\infty),
	\end{align}
	where $\mathcal{H}_*$ is the Sobolev conjugate function of $\mathcal{H}$ given in Definition \ref{def-conjugate-function}.

	For any $(x,t)\in \overline{\Omega}\times [0,\infty)$ we have $\mathcal{H}(x,t)\geq t^{p(x)}$ and so $\mathcal{H}^{-1}(x,t)\leq t^{\frac{1}{p(x)}}$. From this and \eqref{form-H_*^-1} we get
	\begin{align*}
		\mathcal{H}^{-1}_*(x,t)\leq \int^t_0 \frac{\tau^{\frac{1}{p(x)}}}{\tau^{\frac{N+1}{N}}}\,\diff \tau=p^*(x)t^{\frac{1}{p^*(x)}} \quad \text{for all }(x,t)\in \overline{\Omega}\times [0,\infty).
	\end{align*}
	It follows that
	\begin{align*}
		t=\mathcal{H}^{-1}_*(x,\mathcal{H}_*(x,t))\leq p^*(x)\left[\mathcal{H}_*(x,t)\right]^{\frac{1}{p^*(x)}} \quad \text{for all }(x,t)\in \overline{\Omega}\times [0,\infty),
	\end{align*}
	this means
	\begin{align}\label{H*-p}
		\mathcal{H}_*(x,t)\geq [p^*(x)]^{-p^*(x)}t^{p^*(x)}\quad \text{for all }(x,t)\in \overline{\Omega}\times [0,\infty).
	\end{align}

	Similarly, we obtain for any $(x,t)\in \left(\overline{\Omega}\setminus\mu^{-1}(\{0\})\right)\times [0,\infty)$ that
	\begin{align*}
		\mathcal{H}^{-1}(x,t)\leq \mu(x)^{-\frac{1}{q(x)}}t^{\frac{1}{q(x)}},
	\end{align*}
	which implies
	\begin{align*}
		\mathcal{H}^{-1}_*(x,t)\leq \int^t_0 \frac{\mu(x)^{-\frac{1}{q(x)}}\tau^{\frac{1}{q(x)}}}{\tau^{\frac{N+1}{N}}}\,\diff \tau=q^*(x)\mu(x)^{-\frac{1}{q(x)}}t^{\frac{1}{q^*(x)}}
	\end{align*}
	for all $(x,t)\in \left(\overline{\Omega}\setminus\mu^{-1}(\{0\})\right)\times [0,\infty)$. Thus we have
	\begin{align*}
		t=\mathcal{H}^{-1}_*(x,\mathcal{H}_*(x,t))\leq q^*(x)\mu(x)^{-\frac{1}{q(x)}}\left[\mathcal{H}_*(x,t)\right]^{\frac{1}{q^*(x)}}
	\end{align*}
	for all $(x,t)\in \left(\overline{\Omega}\setminus\mu^{-1}(\{0\})\right)\times [0,\infty)$.  This finally gives
	\begin{align}\label{H*-q}
		\mathcal{H}_*(x,t)\geq [q^*(x)]^{-q^*(x)}\mu(x)^{\frac{q^*(x)}{q(x)}}t^{q^*(x)}\quad \text{for all }(x,t)\in \overline{\Omega}\times [0,\infty).
	\end{align}
	From \eqref{H*-p} and \eqref{H*-q} we get that
	\begin{align*}
		[p^*(x)]^{-p^*(x)}t^{p^*(x)}+[q^*(x)]^{-q^*(x)}\mu(x)^{\frac{q^*(x)}{q(x)}}t^{q^*(x)}\leq 2\mathcal{H}_*(x,t)\quad \text{for all }(x,t)\in \overline{\Omega}\times [0,\infty).
	\end{align*}
	Then, \eqref{Est-H^*} follows.

	Invoking Propositions \ref{prop_complete}(ii) and \ref{prop_dom-emb} along with \eqref{Est-H^*} we have the continuous embeddings
	\begin{align*}
		W^{1,\mathcal{H}}(\Omega)\hookrightarrow \Lp{\mathcal{H}_*}\hookrightarrow\Lp{\mathcal{G}^*},
	\end{align*}
	which shows \eqref{embedding-critical-domain}. The proof is complete.
\end{proof}
Even the embedding $\Wp{\mathcal{H}}\hookrightarrow \Lp{\mathcal{H}_*}$ is not optimal as mentioned in the Introduction, we will try to determine the optimal Musielak-Orlicz Space $L^{\mathcal{B}_{r,s,\alpha}}(\Omega)$ among those with $\mathcal{B}_{r,s,\alpha}$ of the form
\begin{align*}
	\mathcal{B}_{r,s,\alpha}(x,t):=t^{r(x)}+\mu(x)^{\alpha(x)} t^{s(x)}
	\quad \text{for }(x,t)\in \overline{\Omega}\times [0,\infty),
\end{align*}
where $r,s,\alpha$ are positive continuous functions on $\overline{\Omega}$, such that the following continuous embedding holds
\begin{align}\label{embedding-critical-domain-O}
	W^{1,\mathcal{H}}(\Omega)\hookrightarrow L^{\mathcal{B}_{r,s,\alpha}}(\Omega).
\end{align}
By the optimal $N(\Omega)$-function $\mathcal{B}_{r_0,s_0,\alpha_0}$ for the embedding \eqref{embedding-critical-domain-O}, we mean that if  \eqref{embedding-critical-domain-O} holds for any data $(p,q,\mu,\Omega)$ satisfying the assumption \eqref{H3}, then there must be $r\leq r_0$, $s\leq s_0$ and $\alpha\geq \alpha_0$.
\begin{proposition}\label{prop_C-do-E-Op}
	Let hypotheses  \eqref{H3} be satisfied with constant exponents. Then  $\mathcal{B}_{p^*,q^*,\frac{q^*}{q}}$ is the optimal $N(\Omega)$-function for the embedding \eqref{embedding-critical-domain-O}.
\end{proposition}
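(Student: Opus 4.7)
The plan is to establish each of the three required inequalities separately by producing, for any candidate $(r,s,\alpha)$ that violates one of them, a specific admissible datum $(p,q,\mu,\Omega)$ and a test function (or sequence) for which the embedding \eqref{embedding-critical-domain-O} fails. The first two inequalities reduce to sharp classical Sobolev embeddings: choosing $\mu\equiv 0$ identifies $\Wp{\mathcal{H}}$ with $\Wp{p}$ and $L^{\mathcal{B}_{r,s,\alpha}}(\Omega)$ with $\Lp{r}$, so that the sharpness of $\Wp{p}\hookrightarrow\Lp{p^*}$ forces $r\leq p^*$; choosing $\mu\equiv 1$ identifies $\Wp{\mathcal{H}}$ (as a set, with equivalent norms) with $\Wp{q}$ and $L^{\mathcal{B}_{r,s,\alpha}}(\Omega)$ with $\Lp{\max\{r,s\}}$, so that the sharpness of $\Wp{q}\hookrightarrow\Lp{q^*}$ forces $\max\{r,s\}\leq q^*$ and in particular $s\leq q^*$.

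For the substantive inequality $\alpha\geq q^*/q$, I fix any $x_0\in\Omega$ and take the admissible weight $\mu(x):=|x-x_0|$, which is Lipschitz and vanishes only at $x_0$. Pick $e\in\R^N$ with $|e|=1$ and $\phi\in C_c^\infty(B_1(0))\setminus\{0\}$; for sequences $r_k\to 0^+$ and $\eta_k:=r_k^M$ with $M>0$ to be fixed, set $y_k:=x_0+r_k e$ and define
\[
u_k(x):=\eta_k^{-N/q^*}\phi\!\round{\frac{x-y_k}{\eta_k}}.
\]
A direct change of variables yields $\int_\Omega|\nabla u_k|^q\,\diff x=\|\nabla\phi\|_q^q$ and $\int_\Omega|\nabla u_k|^p\,\diff x=\eta_k^{N(q-p)/q}\|\nabla\phi\|_p^p$, while the analogous terms without $\nabla$ are of strictly higher positive order in $\eta_k$. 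Since $\eta_k\ll r_k$, one has $\mu\sim r_k$ on $\supp u_k$, whence $\int_\Omega\mu|\nabla u_k|^q\,\diff x\sim r_k\|\nabla\phi\|_q^q$. Hypothesis \eqref{H3} yields $N(q-p)/q<1$, so choosing $M>q/[N(q-p)]$ renders $\eta_k^{N(q-p)/q}\ll r_k$ and absorbs the $p$-term into the weighted $q$-term. Consequently the modular in \eqref{modular-W1p} satisfies $\hat{\rho}_{1,\mathcal{H},\Omega}(u_k)\sim r_k$, and Proposition \ref{prop_mod-nor2} delivers $\|u_k\|_{1,\mathcal{H},\Omega}\sim r_k^{1/q}$ as $k\to\infty$.

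Specializing to the critical case $s\equiv q^*$ (the worst case given Step 2), continuity of $\alpha$ together with the same change of variables gives $\int_\Omega\mu^{\alpha(x)}u_k^{q^*}\,\diff x\sim r_k^{\alpha(x_0)}\|\phi\|_{q^*}^{q^*}$, while $\int_\Omega u_k^r\,\diff x=\eta_k^{N(1-r/q^*)}\|\phi\|_r^r\to 0$; hence $\|u_k\|_{\mathcal{B}_{r,s,\alpha}}\sim r_k^{\alpha(x_0)/q^*}$. Inserting these asymptotics into the embedding bound $\|u_k\|_{\mathcal{B}_{r,s,\alpha}}\leq C\|u_k\|_{1,\mathcal{H},\Omega}$ produces $r_k^{\alpha(x_0)/q^*-1/q}\lesssim 1$, and letting $r_k\to 0^+$ forces $\alpha(x_0)\geq q^*/q$. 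Since $x_0\in\Omega$ is arbitrary, continuity of $\alpha$ yields $\alpha\geq q^*/q$ on $\overline{\Omega}$.

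The main obstacle is the two-scale nature of this construction: the distance $r_k$ governs both the locus of concentration and the order of $\mu$ on $\supp u_k$, while $\eta_k$ sets the internal concentration scale of $u_k$ and must be much smaller than $r_k$ to suppress the $p$-term in the modular and to keep $\supp u_k$ in a neighborhood where $\mu\sim r_k$. The rate constraint $(q/p)^+<1+1/N$ in \eqref{H3} is precisely what makes $M>q/[N(q-p)]$ admissible; a one-scale concentration directly at $x_0$ yields only the strictly weaker bound $\alpha\geq N(q^*-p^*)/p^*$, and it is the separation of the two scales that produces the sharp exponent $q^*/q$.
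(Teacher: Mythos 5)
Your argument is correct, but it follows a genuinely different route from the paper's, above all for the key inequality $\alpha\geq q^*/q$. The paper works on the unit ball with $\mu(x)=|x|$ and the one-parameter dilations $v_\lambda(x)=u(\lambda x)$: matching powers of $\lambda$ gives $r\leq p^*$ and $s\leq q^*$, but for the weight exponent it only yields $\alpha\geq \frac{q^*}{q}-\frac{N^2}{N-q}\,\eps$ for data with $\frac{q}{p}=1+\frac{1}{N}-\eps$, so the sharp constant is recovered only by letting $p$ run through a family approaching the borderline ratio, i.e.\ by exploiting the quantifier over all admissible data. Your two-scale concentration (bumps of width $\eta_k=r_k^{M}$ at distance $r_k$ from the zero of $\mu$, with the $q^*$-critical normalization) decouples the size of $\mu$ on the support from the concentration scale and gives the sharp bound $\alpha(x_0)\geq q^*/q$ for each \emph{fixed} admissible pair $(p,q)$, and even pointwise for variable $\alpha$; your reductions $\mu\equiv 0$ and $\mu\equiv 1$ for $r\leq p^*$, $s\leq q^*$ are also more elementary than the paper's scalings. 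Two minor points: the condition $\left(\frac{q}{p}\right)^+<1+\frac{1}{N}$ is not what makes $M>q/[N(q-p)]$ admissible (any sufficiently large $M$ works for all $1<p<q<N$; the ratio condition only raises the threshold, and it is the one-scale argument that is sensitive to it), and the asymptotic $\|u_k\|_{1,\mathcal{H},\Omega}\sim r_k^{1/q}$ should be justified as the upper bound via Proposition \ref{prop_mod-nor2}(iii), which is all you need; finally, reading the necessity of $\alpha\geq q^*/q$ in the borderline case $s\equiv q^*$, as you do, matches the paper's own (implicit) interpretation, since for $s\leq p^*$ every $\alpha>0$ works.
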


\begin{proof}
	For simplification, let $\Omega=B$ be the unit ball in $\R^N$ and let $p,q,\alpha$ are constants satisfying $\alpha>1$, $1<p<q<N$ and $\frac{q}{p}<1+\frac{1}{N}$. First, we will show that if \eqref{embedding-critical-domain-O} holds then $r\leq p^*$ and $s\leq q^*$.

	Fix $u\in C_c^\infty(B)\setminus\{0\}$. For each $\lambda>0$, we define
	\begin{align*}
		v_\lambda(x):=u(\lambda x), \quad x\in\R^N.
	\end{align*}
	Clearly, $v_\lambda\in W^{1,\mathcal{H}}_0(B)$ for all $\lambda\geq 1$. If the embedding \eqref{embedding-critical-domain-O} holds, then by Proposition~\ref{prop_poincare} we find $C>0$ such that
	\begin{align}\label{P.Pro3.5-1}
		\|v_\lambda\|_{\mathcal{B}_{r,s,\alpha},\Omega}\leq C\|\nabla v_\lambda\|_{\mathcal{H}}
		\quad\text{for all }\lambda\geq 1.
	\end{align}
By the definition of the Musielak-Orlicz norm, we easily see that for $\varphi(x,t):=t^{\alpha}+w(x)t^{\beta}$ in $B\times [0,\infty)$ with $1\leq\alpha\leq\beta$, and $0\leq w(\cdot)\in L^1(B)$, it holds
\begin{align*}
\frac{1}{2}\left[\left(\int_B|v|^{\alpha}\,\diff x\right)^{\frac{1}{\alpha}}+\left(\int_Bw(x)|v|^{\beta}\,\diff x\right)^{\frac{1}{\beta}}\right]\leq \|v\|_{\varphi,B}\leq 2^{\frac{1}{\alpha}} \left[\left(\int_B|v|^{\alpha}\,\diff x\right)^{\frac{1}{\alpha}}+\left(\int_Bw(x)|v|^{\beta}\,\diff x\right)^{\frac{1}{\beta}}\right]
\end{align*}
for all $v\in L^\varphi(B)$. By means of this fact, from \eqref{P.Pro3.5-1} we find a constant $\bar{C}>0$ such that
	\begin{align}\label{O}
		\begin{split}
			&\left(\int_B|v_\lambda|^{r}\,\diff x\right)^{\frac{1}{r}}+\left(\int_B\mu(x)^\alpha|v_\lambda|^{s}\,\diff x\right)^{\frac{1}{s}}\\
			&\leq \bar{C}\left[\left(\int_B|\nabla v_\lambda|^p\,\diff x\right)^{\frac{1}{p}}+\left(\int_B\mu(x)|\nabla v_\lambda|^{q}\,\diff x\right)^{\frac{1}{q}}\right]
			\quad\text{for all }\lambda\geq 1.
		\end{split}
	\end{align}
	In order to see $s\leq q^*$, let $\mu(x)\equiv 1$. From \eqref{O} we get
	\begin{align}\label{O1}
		\left(\int_B|v_\lambda|^{s}\,\diff x\right)^{\frac{1}{s}}\leq \bar{C}\left[\left(\int_B|\nabla v_\lambda|^p\,\diff x\right)^{\frac{1}{p}}+\left(\int_B|\nabla v_\lambda|^{q}\,\diff x\right)^{\frac{1}{q}}\right]
		\quad\text{for all }\lambda\geq 1.
	\end{align}
	By change of the variable $y=\lambda x$ we get from \eqref{O1} that
	\begin{align*}
		&\lambda^{-\frac{N}{s}}\left(\int_B|u(y)|^{s}\,\diff y\right)^{\frac{1}{s}}\\
		&\leq \bar{C}\left[\lambda^{\frac{p-N}{p}}\left(\int_B|\nabla u(y)|^p\,\diff y\right)^{\frac{1}{p}}+\lambda^{\frac{q-N}{q}}\left(\int_B|y||\nabla u(y)|^{q}\,\diff y\right)^{\frac{1}{q}}\right]
		\quad\text{for all }\lambda\geq 1.
	\end{align*}
	Since the preceding inequality holds for all $\lambda\geq 1$, noticing $\frac{p-N}{p}<\frac{q-N}{q}$, we obtain
	\begin{align*}
		-\frac{N}{s}\leq \frac{q-N}{q}, \ \ \text{i.e.,}\ \ s\leq q^*.
	\end{align*}
	Next, let $\mu(x)=|x|$. Then, from \eqref{O} we have
	\begin{align*}
		\left(\int_B|v_\lambda|^{r}\,\diff x\right)^{\frac{1}{r}}\leq \bar{C}\left[\left(\int_B|\nabla v_\lambda|^p\,\diff x\right)^{\frac{1}{p}}+\left(\int_B|x||\nabla v_\lambda|^{q}\,\diff x\right)^{\frac{1}{q}}\right]
		\quad\text{for all }\lambda\geq 1,
	\end{align*}
	and
	\begin{align}\label{O3}
		\left(\int_B|x|^\alpha|v_\lambda|^{s}\,\diff x\right)^{\frac{1}{s}}\leq \bar{C}\left[\left(\int_B|\nabla v_\lambda|^p\,\diff x\right)^{\frac{1}{p}}+\left(\int_B|x||\nabla v_\lambda|^{q}\,\diff x\right)^{\frac{1}{q}}\right]
		\quad\text{for all }\lambda\geq 1.
	\end{align}
	By change of the variable $y=\lambda x$ we get from \eqref{O1} that
	\begin{align*}
		&\lambda^{-\frac{N}{r}}\left(\int_B|u(y)|^{r}\,\diff y\right)^{\frac{1}{r}}\\
		&\leq \bar{C}\left[\lambda^{\frac{p-N}{p}}\left(\int_B|\nabla u(y)|^p\,\diff y\right)^{\frac{1}{p}}+\lambda^{\frac{q-N-1}{q}}\left(\int_B|y||\nabla u(y)|^{q}\,\diff y\right)^{\frac{1}{q}}\right]
		\quad\text{for all }\lambda\geq 1.
	\end{align*}
	Since the preceding inequality holds for all $\lambda\geq 1$, noticing that $\frac{q}{p}<1+\frac{1}{N}$ is equivalent to  $\frac{p-N}{p}>\frac{q-N-1}{q}$, we obtain
	\begin{align*}
		-\frac{N}{r}\leq \frac{p-N}{p}, \ \ \text{i.e.,}\ \ r\leq p^*.
	\end{align*}
	Finally, we will show $\alpha\geq \frac{q^*}{q}$. For this purpose, let $p,q,\alpha>1$ be constants satisfying $\frac{N+1}{N}<q<N$ and
	\begin{align}\label{Re-CE-2}
		\frac{q}{p}=1+\frac{1}{N}-\eps
	\end{align}
	for $\eps\in (0,1)$ small enough. From \eqref{O3} we get
	\begin{align*}
		&\lambda^{-\frac{\alpha+N}{s}}\left(\int_B|y|^\alpha|u(y)|^{s}\,\diff y\right)^{\frac{1}{s}}\\
		&\leq \bar{C}\left[\lambda^{\frac{p-N}{p}}\left(\int_B|\nabla u(y)|^p\,\diff y\right)^{\frac{1}{p}}+\lambda^{\frac{q-N-1}{q}}\left(\int_B|y||\nabla u(y)|^{q}\,\diff y\right)^{\frac{1}{q}}\right]
		\quad\text{for all }\lambda\geq 1.
	\end{align*}
	Since the preceding inequality holds for all $\lambda\geq 1$, noticing $\frac{p-N}{p}>\frac{q-N-1}{q}$, we obtain
	\begin{align*}
		-\frac{\alpha+N}{q^*}\leq \frac{p-N}{p}.
	\end{align*}
	From this and \eqref{Re-CE-2} we easily deduce that
	\begin{align}\label{Re-CE-ep}
		\alpha\geq \frac{s}{q}-\frac{N^2}{N-q}\eps.
	\end{align}
	This means that if $\alpha< \frac{q^*}{q}$, then by taking $p\in (1,q)$ satisfying \eqref{Re-CE-2} with $\eps>0$ sufficiently small such that $\alpha< \frac{q^*}{q}-\frac{N^2}{N-q}\eps$ we have that \eqref{Re-CE-ep} cannot happen. Hence the embedding \eqref{embedding-critical-domain-O} does not hold.
	Thus, we have shown that the necessary condition for \eqref{embedding-critical-domain-O} to be valid for all $p\in (1,q)$ and for all $0\leq \mu(\cdot)\in C^{0,1}(\close)$ is $r\leq p^*$, $s\leq q^*$ and $\alpha\geq\frac{q^*}{q}$.

\end{proof}

Next, we will look for an explicit form for the critical boundary trace embedding. 
We have the following proposition.

\begin{proposition}\label{prop_C-T-E}
	Let hypotheses \eqref{H3} be satisfied. Then we have the continuous embedding
	\begin{align*}
		W^{1,\mathcal{H}}(\Omega)\hookrightarrow L^{\mathcal{T}^*}(\Gamma),
	\end{align*}
	where $\mathcal{T}^*$ is given by
	\begin{align*}
		\mathcal{T}^*(x,t):=t^{p_*(x)}+\mu(x)^{\frac{q_*(x)}{q(x)}}t^{q_*(x)}
		\quad \text{for }(x,t)\in \overline{\Omega}\times [0,\infty)
	\end{align*}
	with the critical exponents $p_*$, $q_*$ given in \eqref{critical-eponents}.
\end{proposition}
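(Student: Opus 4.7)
The plan is to mirror the argument used in Proposition \ref{prop_C-do-E}, replacing the bulk embedding $W^{1,\mathcal{H}}(\Omega)\hookrightarrow L^{\mathcal{H}_*}(\Omega)$ by the trace embedding of Proposition \ref{prop_Gen-C-T-E}, namely
\begin{equation*}
	W^{1,\mathcal{H}}(\Omega)\hookrightarrow L^{\mathcal{H}_*^{(N-1)/N}}(\Gamma),
	\qquad
	\mathcal{H}_*^{(N-1)/N}(x,t)=\bigl[\mathcal{H}_*(x,t)\bigr]^{(N-1)/N}.
\end{equation*}
It therefore suffices to prove the pointwise comparison
\begin{equation}\label{plan-key}
	\mathcal{T}^*(x,t)\le C\,\mathcal{H}_*^{(N-1)/N}(x,t)
	\quad\text{for all }(x,t)\in\close\times[0,\infty)
\end{equation}
for some constant $C>0$, and then to invoke the obvious boundary analogue of Proposition \ref{prop_complete}(ii) (locally integrable generalized $\Phi$-functions with $\mathcal{T}^*\prec \mathcal{H}_*^{(N-1)/N}$ yield $L^{\mathcal{H}_*^{(N-1)/N}}(\Gamma)\hookrightarrow L^{\mathcal{T}^*}(\Gamma)$).

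For \eqref{plan-key}, I would recycle the two lower bounds on $\mathcal{H}_*$ that were already derived in the proof of Proposition \ref{prop_C-do-E}, namely
\begin{equation*}
	\mathcal{H}_*(x,t)\ge [p^*(x)]^{-p^*(x)}t^{p^*(x)},
	\qquad
	\mathcal{H}_*(x,t)\ge [q^*(x)]^{-q^*(x)}\mu(x)^{\frac{q^*(x)}{q(x)}}t^{q^*(x)},
\end{equation*}
and raise both inequalities to the power $(N-1)/N$. The decisive identities are the elementary relations
\begin{equation*}
	p^*(x)\cdot\frac{N-1}{N}=p_*(x),
	\qquad
	q^*(x)\cdot\frac{N-1}{N}=q_*(x),
\end{equation*}
which follow directly from \eqref{critical-eponents}; they convert the powers $p^*(x),q^*(x)$ inside the bounds into precisely $p_*(x),q_*(x)$, while the $\mu$-weight transforms as
\begin{equation*}
	\mu(x)^{\frac{q^*(x)}{q(x)}\cdot\frac{N-1}{N}}=\mu(x)^{\frac{q_*(x)}{q(x)}}.
\end{equation*}
Summing the two resulting inequalities and using that $p,q\in C(\close)$ are bounded so that the prefactors $[p^*(x)]^{-p_*(x)}$ and $[q^*(x)]^{-q_*(x)}$ are uniformly bounded below on $\close$, we arrive at \eqref{plan-key} with an explicit constant depending only on $(p^*)^+$ and $(q^*)^+$.

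The only point requiring minor care is the boundary version of Proposition \ref{prop_complete}(ii): one needs to confirm that $\mathcal{T}^*$ and $\mathcal{H}_*^{(N-1)/N}$ are locally integrable on $\Gamma$ (which follows since $\mu\in C^{0,1}(\close)\subset L^\infty(\Gamma)$ and the exponents are continuous), and that the pointwise domination on $\close\times[0,\infty)$ implies the $\prec$-relation on $\Gamma$. Once this is in place, chaining the embedding of Proposition \ref{prop_Gen-C-T-E} with the inclusion $L^{\mathcal{H}_*^{(N-1)/N}}(\Gamma)\hookrightarrow L^{\mathcal{T}^*}(\Gamma)$ produces the desired continuous embedding $W^{1,\mathcal{H}}(\Omega)\hookrightarrow L^{\mathcal{T}^*}(\Gamma)$. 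I do not expect any substantive obstacle; the bookkeeping of the exponents and the $\mu$-weight is the only delicate part.
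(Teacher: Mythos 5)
Your proposal is correct and follows essentially the same route as the paper: both rest on the pointwise bound $\mathcal{T}^*(x,t)\le C\,[\mathcal{H}_*(x,t)]^{\frac{N-1}{N}}$ (the paper obtains it via convexity applied to $[\mathcal{T}^*]^{\frac{N}{N-1}}\le 2^{\frac{1}{N-1}}\mathcal{G}^*$ together with \eqref{Est-H^*}, you by raising the two lower bounds \eqref{H*-p} and \eqref{H*-q} to the power $\frac{N-1}{N}$ and summing) and then chain with the general trace embedding of Proposition \ref{prop_Gen-C-T-E}. The only cosmetic difference is the final step: the paper converts the pointwise domination into the norm estimate by an explicit modular-scaling argument on $\Gamma$, whereas you invoke the boundary analogue of Proposition \ref{prop_complete}(ii) (Musielak's embedding theorem, valid on general measure spaces, with the local integrability on $\Gamma$ you correctly flag), which is an equally legitimate way to conclude.
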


\begin{proof}
	From Jensen's inequality and \eqref{Est-H^*}, we have
	\begin{align*}
		\left[\mathcal{T}^*(x,t)\right]^{\frac{N}{N-1}}
		\leq 2^{\frac{N}{N-1}-1}\mathcal{G}^*(x,t)
		\leq 2^{\frac{N}{N-1}}\left[\left(q^*\right)^{q^*}\right]^+\mathcal{H}_*(x,t)
		\quad\text{for all } (x,t)\in \overline{\Omega}\times [0,\infty).
	\end{align*}
	This  implies that
	\begin{align}\label{Est-T^*}
		\mathcal{T}^*(x,t)\leq 2\left(\left[\left(q^*\right)^{q^*}\right]^+\right)^{\frac{N-1}{N}}\left[\mathcal{H}_*(x,t)\right]^{\frac{N-1}{N}}\quad\text{for all } (x,t)\in \overline{\Omega}\times [0,\infty).
	\end{align}

	Let $u\in W^{1,\mathcal{H}}(\Omega)$. From Proposition \ref{prop_Gen-C-T-E} we have $u\in L^{\mathcal{W}}(\Gamma)$, where $\mathcal{W}(x,t):= \left[\mathcal{H}_*(x,t)\right]^{\frac{N-1}{N}}$. Hence, $u\in L^{\mathcal{T}^*}(\Gamma)$ due to \eqref{Est-T^*}. We set $\lambda=\|u\|_{\mathcal{W},\Gamma}$ and assume first that $\lambda>0$. Then, we obtain
	\begin{align*}
		1=\int_\Gamma \mathcal{W}\left(x,\left|\frac{u}{\lambda}\right|\right)\,\diff\sigma\geq c_0^{-1} \int_\Gamma \mathcal{T}^*\left(x,\left|\frac{u}{\lambda}\right|\right)\,\diff\sigma,
	\end{align*}
	where $c_0:=2\left(\left[\left(q^*\right)^{q^*}\right]^+\right)^{\frac{N-1}{N}}$. This implies
	\begin{align*}
		1\geq \int_\Gamma \mathcal{T}^*\left(x,\left|\frac{u}{c_0^{\frac{1}{(q_*)^+}}\lambda}\right|\right)\,\diff\sigma.
	\end{align*}
	Hence, we get
	\begin{align*}
		\|u\|_{\mathcal{T}^*,\Gamma}\leq c_0^{\frac{1}{(q_*)^+}}\lambda=c_0^{\frac{1}{(q_*)^+}}\|u\|_{\mathcal{W},\Gamma}.
	\end{align*}
	From this and Proposition \ref{prop_Gen-C-T-E} we arrive at
	\begin{align*}
		\|u\|_{\mathcal{T}^*,\Gamma}\leq C\|u\|_{1,\mathcal{H},\Omega},
	\end{align*}
	where $C$ is a positive constant independent of $u$. The proof is complete.
\end{proof}

In the last part of this section we prove new compact embedding results.

\begin{proposition}\label{prop_S-C-E}
	Let hypotheses \eqref{H3} be satisfied and let
	\begin{align*}
		\Psi(x,t):=t^{r(x)}+\mu(x)^{\frac{s(x)}{q(x)}}t^{s(x)}
		\quad\text{for } (x,t)\in \overline{\Omega}\times [0,\infty),
	\end{align*}
	  where $r,s\in C(\close)$ satisfy $1<r(x)\leq p^*(x)$ and $1<s(x)\leq q^*(x)$ for all $x\in\overline{\Omega}$. Then, we have the continuous embedding
	\begin{align}\label{Prop-S-E}
		W^{1,\mathcal{H}}(\Omega)
		\hookrightarrow  L^{\Psi}(\Omega).
	\end{align}
	Furthermore, if $r(x)<p^*(x)$ and $s(x)< q^*(x)$ for all $x\in\overline{\Omega}$, then the embedding in \eqref{Prop-S-E} is compact.
\end{proposition}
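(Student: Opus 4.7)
The plan is to derive the continuous embedding from the critical one in Proposition \ref{prop_C-do-E} by comparing $\Psi$ with $\mathcal{G}^*$, and to obtain the compact embedding from Proposition \ref{prop_dom-emb}(ii) by verifying that $\Psi$ grows essentially slower than $\mathcal{H}_*$.

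For the continuous part I would prove $\Psi \prec \mathcal{G}^*$ in the sense of Proposition \ref{prop_complete}(ii). The crux is the elementary bound $a^\theta \le 1 + a$ for $a \ge 0$ and $\theta \in [0,1]$, applied with $a = \mu(x)^{q^*(x)/q(x)} t^{q^*(x)}$ and $\theta = s(x)/q^*(x) \le 1$ (valid since $s \le q^*$); this yields $\mu(x)^{s(x)/q(x)} t^{s(x)} = a^\theta \le 1 + \mu(x)^{q^*(x)/q(x)} t^{q^*(x)}$. Combined with $t^{r(x)} \le 1 + t^{p^*(x)}$ (from $r \le p^*$) one gets $\Psi(x,t) \le 2 + \mathcal{G}^*(x,t)$ on $\close\times[0,\infty)$, so the constant function $2$ serves as the $L^1(\Omega)$-error in the $\prec$-relation. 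Proposition \ref{prop_complete}(ii) then gives $L^{\mathcal{G}^*}(\Omega) \hookrightarrow L^{\Psi}(\Omega)$ continuously, and composition with Proposition \ref{prop_C-do-E} yields \eqref{Prop-S-E}.

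For the compact embedding under the strict inequalities, I would verify $\Psi \ll \mathcal{H}_*$ and invoke Proposition \ref{prop_dom-emb}(ii) (the required continuity on $\close\times[0,\infty)$ and $\Psi \in N(\Omega)$ follow from $r,s > 1$ together with $\mu \in C^{0,1}(\close)$). For the first summand, \eqref{H*-p} gives
\begin{align*}
t^{r(x)}/\mathcal{H}_*(x,t) \le [p^*(x)]^{p^*(x)} t^{r(x)-p^*(x)},
\end{align*}
which tends to $0$ uniformly in $x$ as $t \to \infty$ because $r-p^* \le -\delta_1 < 0$ on $\close$ by continuity and compactness. The weighted summand is the main obstacle: a naive use of \eqref{H*-q} produces a factor $\mu(x)^{(s(x)-q^*(x))/q(x)}$ that is unbounded as $\mu(x) \to 0$. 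My remedy is to split cases according to which term of $\mathcal{G}^*(x,t)$ dominates. If $t^{p^*(x)} \ge \mu(x)^{q^*(x)/q(x)} t^{q^*(x)}$, then $\mu(x) \le t^{(p^*(x)-q^*(x))q(x)/q^*(x)}$ and I bound $\mathcal{H}_*$ below by \eqref{H*-p}, plugging the upper bound on $\mu$ into $\mu(x)^{s(x)/q(x)}$; otherwise the reverse inequality on $\mu$ holds and I bound $\mathcal{H}_*$ below by \eqref{H*-q}, plugging the lower bound on $\mu$ into $\mu(x)^{(s(x)-q^*(x))/q(x)}$ (noting this exponent is negative, so the inequality flips). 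After a short algebraic simplification both cases yield the single estimate
\begin{align*}
\mu(x)^{s(x)/q(x)} t^{s(x)}/\mathcal{H}_*(x,t) \le C\, t^{p^*(x)(s(x)-q^*(x))/q^*(x)},
\end{align*}
and since $s - q^* \le -\delta_2 < 0$ uniformly on $\close$, this tends to $0$ uniformly, which completes the verification of $\Psi \ll \mathcal{H}_*$ and hence the compactness.
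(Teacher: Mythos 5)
Your argument is correct, and its skeleton coincides with the paper's: the continuous embedding follows from the pointwise bound $\Psi(x,t)\le \mathcal{G}^*(x,t)+2$ together with Propositions \ref{prop_complete}(ii) and \ref{prop_C-do-E}, and compactness from verifying $\Psi\ll\mathcal{H}_*$ and invoking Proposition \ref{prop_dom-emb}(ii). The only point where you genuinely deviate is the verification of the uniform limit for the weighted summand. You handle the degeneracy $\mu(x)\to0$ by a case distinction according to which term of $\mathcal{G}^*(x,t)$ dominates, playing the lower bounds \eqref{H*-p} and \eqref{H*-q} against the resulting upper or lower bound on $\mu(x)$; this is sound and even produces an explicit uniform decay rate $t^{p^*(x)(s(x)-q^*(x))/q^*(x)}$. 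The paper avoids the case split entirely by applying Young's inequality to the single quantity $\mu(x)^{1/q(x)}t$, namely $\mu(x)^{s(x)/q(x)}t^{s(x)}\le \eps\,\mu(x)^{q^*(x)/q(x)}t^{q^*(x)}+\eps^{-s(x)/(q^*(x)-s(x))}$, so the weight never appears with a negative exponent and the alleged obstacle does not arise; dividing by $\mathcal{H}_*(x,t)\gtrsim t^{(p^*)^-}$ (via \eqref{Est-H^*}) and letting $t\to\infty$ then gives the claim at the cost of an arbitrary $\eps$ rather than a rate. Two cosmetic points for your write-up: the definition of $\ll$ involves $\Psi(x,kt)$, so you should absorb the harmless factors $k^{r(x)},k^{s(x)}\le 1+k^{r^+}+k^{s^+}$; and your final exponent stays uniformly negative because $q^*-s$ has a positive minimum on $\close$ by compactness and $p^*/q^*\ge (p^*)^-/(q^*)^+>0$, which is worth stating explicitly.
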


\begin{proof}
	First, it is clear that
	\begin{align*}
		\Psi(x,t)
		\leq t^{p^*(x)}+1+\mu(x)^{\frac{q^*(x)}{q(x)}}t^{q^*(x)}+1=\mathcal{G}^*(x,t)+2\quad\text{for all }(x,t)\in\overline{\Omega} \times [0,\infty).
	\end{align*}
	From this along with Propositions \ref{prop_complete} and \ref{prop_C-do-E} we obtain \eqref{Prop-S-E}.

	Let us now suppose that $r(x)<p^*(x)$ and $s(x)< q^*(x)$ for all $x\in\overline{\Omega}$. In order to prove the compactness of the embedding in \eqref{Prop-S-E} it is sufficient to show that $\Psi \ll \mathcal{H}_*$ due to Proposition \ref{prop_dom-emb}(ii). This means, for any $k>0$, we need to show that
	\begin{align}\label{Pro-S-E-1}
		\lim_{t\to\infty} \frac{\Psi(x,kt)}{\mathcal{H}_*(x,t)}=0 \quad \text{uniformly for a.\,a.\,}x\in\Omega.
	\end{align}

	Indeed, from  \eqref{Est-H^*} we have for $(x,t,k)\in \Omega\times [0,\infty)\times (0,\infty)$ the estimate
	\begin{align*}
		\frac{\Psi(x,kt)}{\mathcal{H}_*(x,t)}
		&\leq 2\left[\left(q^*\right)^{q^*}\right]^+\frac{k^{r(x)}t^{r(x)}+k^{s(x)}\mu(x)^{\frac{s(x)}{q(x)}}t^{s(x)}}{t^{p^*(x)}+\mu(x)^{\frac{q^*(x)}{q(x)}}t^{q^*(x)}}\\
		&\leq 2\left[\left(q^*\right)^{q^*}\right]^+\left(1+k^{r^+}+k^{s^+}\right)\frac{t^{r(x)}+\mu(x)^{\frac{s(x)}{q(x)}}t^{s(x)}}{t^{p^*(x)}+\mu(x)^{\frac{q^*(x)}{q(x)}}t^{q^*(x)}}.
	\end{align*}
	Then, by using Young's inequality with $\eps>0$, we obtain
	\begin{align*}
		t^{r(x)}\leq \eps t^{p^*(x)}+\eps^{-\frac{r(x)}{p^*(x)-r(x)}}\leq \eps t^{p^*(x)}+\eps^{-\left(\frac{r}{p^*-r}\right)^+}+1
	\end{align*}
	and
	\begin{align*}
		\mu(x)^{\frac{s(x)}{q(x)}}t^{s(x)}\leq \eps \mu(x)^{\frac{q^*(x)}{q(x)}}t^{q^*(x)} +\eps^{-\frac{s(x)}{q^*(x)-s(x)}}\leq \eps \mu(x)^{\frac{q^*(x)}{q(x)}}t^{q^*(x)}+\eps^{-\left(\frac{s}{q^*-r}\right)^+}+1.
	\end{align*}
	Combining the last three estimates, we easily get \eqref{Pro-S-E-1} and this completes the proof.
\end{proof}

Similarly to Proposition \ref{prop_S-C-E}, we have the following compact boundary trace embedding.

\begin{proposition}\label{prop_S-C-T-E}
	Let hypotheses \eqref{H3} be satisfied and let
	\begin{align*}
		\Upsilon (x,t)=t^{\ell(x)}+\mu(x)^{\frac{m(x)}{q(x)}}t^{m(x)}
		\quad\text{for } (x,t)\in \overline{\Omega}\times [0,\infty),
	\end{align*}
	where $\ell,m\in C(\close)$ satisfy $1<\ell(x)\leq p_*(x)$ and $1<m(x)\leq q_*(x)$ for all $x\in\overline{\Omega}$. Then, we have the continuous embedding
	\begin{align}\label{c-T}
		W^{1,\mathcal{H}}(\Omega)\hookrightarrow L^{\Upsilon }(\Gamma).
	\end{align}
	Furthermore, if $\ell(x)<p_*(x)$ and $m(x)<q_*(x)$ for all $x\in\overline{\Omega}$, then the embedding \eqref{c-T} is compact.
\end{proposition}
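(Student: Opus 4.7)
The plan is to mirror the proof of Proposition \ref{prop_S-C-E}, with the boundary functional $\mathcal{T}^*$ taking the role previously played by $\mathcal{G}^*$ and $[\mathcal{H}_*]^{(N-1)/N}$ playing the role of $\mathcal{H}_*$. For the continuous embedding I would first observe that since $\ell(x) \leq p_*(x)$ and $m(x) \leq q_*(x)$ on $\overline{\Omega}$, a standard splitting at $t = 1$ gives
\[
\Upsilon(x,t) \leq t^{p_*(x)} + \mu(x)^{q_*(x)/q(x)} t^{q_*(x)} + 2 = \mathcal{T}^*(x,t) + 2
\]
for all $(x,t) \in \overline{\Omega} \times [0,\infty)$. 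Because $\Gamma$ has finite surface measure, the constant $2$ lies in $L^1(\Gamma)$, so $\Upsilon \prec \mathcal{T}^*$ on $\Gamma$. Combining the trace analogue of Proposition \ref{prop_complete}(ii) with the critical trace embedding of Proposition \ref{prop_C-T-E} then chains into $W^{1,\mathcal{H}}(\Omega) \hookrightarrow L^{\mathcal{T}^*}(\Gamma) \hookrightarrow L^{\Upsilon}(\Gamma)$.

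For the compactness statement under the strict inequalities, I would aim to prove the stronger relation $\Upsilon \ll [\mathcal{H}_*]^{(N-1)/N}$, i.e., $\lim_{t \to \infty} \Upsilon(x, kt) / [\mathcal{H}_*(x,t)]^{(N-1)/N} = 0$ uniformly in $x$ for every $k > 0$. The key algebraic input is that $p_*(x) = p^*(x)(N-1)/N$ and $q_*(x) = q^*(x)(N-1)/N$, and therefore $q^*(x)(N-1)/(Nq(x)) = q_*(x)/q(x)$; consequently, raising the inequalities \eqref{H*-p} and \eqref{H*-q} to the power $(N-1)/N$ yields a lower bound of the form
\[
[\mathcal{H}_*(x,t)]^{(N-1)/N} \geq c \bigl( t^{p_*(x)} + \mu(x)^{q_*(x)/q(x)} t^{q_*(x)} \bigr)
\]
with a uniform constant $c > 0$. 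Once this lower bound is in place, the Young-inequality argument from the proof of Proposition \ref{prop_S-C-E} ports over verbatim to deliver the uniform limit.

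With $\Upsilon \ll [\mathcal{H}_*]^{(N-1)/N}$ in hand, the compactness of the embedding should follow from a trace analogue of Proposition \ref{prop_dom-emb}(ii): specifically, the compact version of Theorem 4.2 in Liu-Wang-Zhao \cite{Liu-Wang-Zhao-2016}, which is obtained by the same ``essentially slower growth at infinity implies compactness'' mechanism used in Fan \cite{Fan-2012} but carried out on the boundary. The chief technical obstacle is precisely securing this compact trace criterion in the Musielak-Orlicz framework, since only its continuous counterpart is explicitly invoked earlier in the paper (Proposition \ref{prop_Gen-C-T-E}); however, the underlying approximation argument — truncating $\Upsilon$, invoking the compact variable-exponent trace embedding $W^{1,\mathcal{H}}(\Omega) \hookrightarrow L^{r(\cdot)}(\Gamma)$ from Proposition \ref{proposition_embeddings}(v) to handle the bulk, and using $\Upsilon \ll [\mathcal{H}_*]^{(N-1)/N}$ together with the continuous embedding already established in the first step to make the tail uniformly small — is standard and should close the argument.
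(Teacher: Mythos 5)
Your proof is correct and essentially coincides with the paper's: the continuity part (the bound $\Upsilon(x,t)\le \mathcal{T}^*(x,t)+2$ combined with Propositions \ref{prop_complete}(ii) and \ref{prop_C-T-E}) is exactly the paper's argument. For compactness the paper needs no compact version of the Liu--Wang--Zhao trace theorem; it runs precisely the fallback argument you sketch --- convergence in measure on $\Gamma$ via $W^{1,\mathcal{H}}(\Omega)\hookrightarrow W^{1,p^-}(\Omega)\hookrightarrow\hookrightarrow L^1(\Gamma)$, a Young-inequality tail estimate $\Upsilon(x,t)\le \tfrac14\mathcal{T}^*\l(x,\tfrac{t}{k_0}\r)$ for large $t$ (equivalent to your relation $\Upsilon\ll[\mathcal{H}_*]^{\frac{N-1}{N}}$), and the Cauchy-sequence argument of Adams--Fournier, Theorem 8.24 --- so the ``technical obstacle'' you flag is resolved exactly along the lines you propose.
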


\begin{proof}
	We have
	\begin{align}\label{est-1}
		\Upsilon(x,t)
		\leq t^{p_*(x)}+1+\mu(x)^{\frac{q_*(x)}{q(x)}}t^{q_*(x)}+1=\mathcal{T}^*(x,t)+2\quad\text{for all }(x,t)\in\close \times [0,\infty).
	\end{align}
	Then, the embedding \eqref{c-T} follows from Propositions \ref{prop_complete} and \ref{prop_C-T-E} by taking \eqref{est-1} into account.

	Next, suppose that $\ell(x)<p_*(x)$ and $m(x)<q_*(x)$ for all $x\in\overline{\Omega}$ and note that
	\begin{align}\label{P.C-T.1}
		W^{1,\mathcal{H}}(\Omega)\hookrightarrow W^{1,p^-}(\Omega)\hookrightarrow \hookrightarrow L^{1}(\Gamma).
	\end{align}
	Let $\{u_n\}_{n\in\N}$ be a bounded sequence in $W^{1,\mathcal{H}}(\Omega)$. From \eqref{P.C-T.1} we can suppose, up to a subsequence not relabeled, that $u_n\to u$ in measure on $\Gamma$. Let $\eps>0$ be given and set
	\begin{align*}
		v_{j,k}(x):=\frac{u_j(x)-u_k(x)}{\eps}\quad \text{for } j,k\in\mathbb{N}.
	\end{align*}
	From Proposition \ref{prop_C-T-E} we see that $\{v_{j,k}\}_{j,k\in\N}$ is bounded in $L^{\mathcal{T}^*}(\Gamma)$, say
	$\|v_{j,k}\|_{\mathcal{T}^*,\Gamma}\leq k_0$ for all $j,k\in\mathbb{N}$. Arguing as in the proof of Proposition \ref{prop_S-C-E}, we find $t_0>0$ such that
	\begin{align*}
		\Upsilon(x,t)\leq \frac{1}{4}	\mathcal{T}^*\l(x,\frac{t}{k_0}\r)
		\quad \text{for all } (x,t)\in \overline{\Omega}\times (t_0,\infty).
	\end{align*}
	Then, arguing as in the proof of Theorem 8.24 of Adams-Fournier \cite{Adams-Fournier-2003}, we find $N_\eps$ such that $\|v_{j,k}\|_{\Upsilon,\Gamma}<1$ for all $j,k\geq N_\eps$. That is, we have shown that $\|u_j-u_k\|_{\Upsilon,\Gamma}<\eps$ for all $j,k\geq N_\eps$. Hence, $u_n\to u$ in $L^{\Upsilon}(\Gamma)$. The proof is complete.
\end{proof}

\section{A priori bounds for generalized double phase problems with subcritical growth}\label{Section-4}

In this section, we prove the boundedness of weak solutions to the problems \eqref{D} and \eqref{N} when the nonlinearities involved satisfy a subcritical growth as developed in Propositions \ref{prop_S-C-E} and \ref{prop_S-C-T-E}. The proofs are using ideas from the papers of Ho-Kim \cite{Ho-Kim-2019}, Ho-Kim-Winkert-Zhang \cite{Ho-Kim-Winkert-Zhang-2022} and Winkert-Zacher \cite{Winkert-Zacher-2012,Winkert-Zacher-2015}.

Let hypotheses \eqref{H3} be satisfied. We suppose the following structure conditions on $\mathcal{A}$ and $\mathcal{B}$:
\begin{enumerate}[label=\textnormal{(D$_1$):},ref=\textnormal{D$_1$}]
	\item\label{D1}
		The functions $\mathcal{A}\colon\Omega\times\R\times\R^N\to \R^N$ and $\mathcal{B}\colon\Omega \times \R\times \R^N\to \R$ are Carath\'eodory functions such that
		\begin{enumerate}[label=\textnormal{(\roman*)},ref=\textnormal{(D$_1$)(\roman*)}]
			\item
				$ |\mathcal{A}(x,t,\xi)|
				\leq \alpha_1 \left[|t|^{\frac{p^*(x)}{p'(x)}}+\mu(x)^{\frac{N-1}{N-q(x)}}|t|^{\frac{q^*(x)}{q'(x)}}+|\xi|^{p(x)-1} +\mu(x)|\xi|^{q(x)-1}+1\right],$
			\item\label{D1ii}
				$ \mathcal{A}(x,t,\xi)\cdot \xi \geq \alpha_2 \left[|\xi|^{p(x)} +\mu(x)|\xi|^{q(x)}\right]-\alpha_3\left[|t|^{r(x)}+\mu(x)^{\frac{s(x)}{q(x)}}|t|^{s(x)}+1\right],$
			\item\label{D1iii}
				$ |\mathcal{B}(x,t,\xi)|
				\leq \beta \left[|t|^{r(x)-1}+\mu(x)^{\frac{s(x)}{q(x)}}|t|^{s(x)-1}+|\xi|^{\frac{p(x)}{r'(x)}} +\mu(x)^{\frac{1}{q(x)}+\frac{1}{s'(x)}}|\xi|^{\frac{q(x)}{s'(x)}}+1 \right],$
		\end{enumerate}
		for a.\,a.\,$x\in\Omega$ and for all $(t,\xi) \in \R\times\R^N$, where $\alpha_1,\alpha_2,\alpha_3$, $\beta$ are positive constants and $r,s\in C(\overline{\Omega})$ satisfy $p(x)<r(x)<p^*(x)$ and $q(x)<s(x)<q^*(x)$ for all $x\in\overline{\Omega}$.
\end{enumerate}

For the second problem \eqref{N} with nonlinear boundary condition we need the additional assumption on $\mathcal{C}$:

\begin{enumerate}[label=\textnormal{(N$_1$):},ref=\textnormal{N$_1$}]
	\item\label{N1}
		The function $\mathcal{C}\colon\Gamma \times \R\to \R$ is a Carath\'eodory function such that
	\begin{enumerate}[label=\textnormal{(\roman*)},ref=\textnormal{(N$_1$)(\roman*)}]
		\item[]
			$ |\mathcal{C}(x,t)|\leq \gamma \left[|t|^{\ell(x)-1}+\mu(x)^{\frac{h(x)}{q(x)}}|t|^{h(x)-1}+1\right]$
	\end{enumerate}
	for a.\,a.\,$x\in\Gamma$ and for all $t\in \mathbb{R}$, where $\gamma$ is a positive constant and $\ell,h\in C(\overline{\Omega})$ satisfy $p(x)<\ell(x)<p_*(x)$ and $q(x)<h(x)<q_*(x)$ for all $x\in\overline{\Omega}$.
\end{enumerate}

The weak formulation of \eqref{D} and \eqref{N} read as follows.

\begin{definition}\label{Def.Sol} $~$
	\begin{enumerate}
		\item[\textnormal{(i)}]
			We say that $ u\in W_0^{1,\mathcal{H}}(\Omega) $ is a weak solution of problem \eqref {D}  if
			\begin{align}\label{def_sol_D}
				\int_{\Omega}\mathcal{A}(x,u,\nabla u)\cdot \nabla \varphi \,\diff x
				= \int_{\Omega}\mathcal{B}(x,u, \nabla u)\varphi \,\diff x
			\end{align}
			is satisfied for all $\ph \in W_0^{1,\mathcal{H}}(\Omega)$.
		\item[\textnormal{(ii)}]
			We say that $ u\in W^{1,\mathcal{H}}(\Omega) $ is a weak solution of problem \eqref {N} if
			\begin{align}\label{def_sol_N}
				\int_{\Omega}\mathcal{A}(x,u,\nabla u)\cdot \nabla \varphi \,\diff x
				= \int_{\Omega}\mathcal{B}(x,u, \nabla u)\varphi \,\diff x+\int_{\Gamma}\mathcal{C}(x,u)\varphi \,\diff\sigma
			\end{align}
			is satisfied for all $\varphi\in W^{1,\mathcal{H}}(\Omega)$.
	\end{enumerate}
\end{definition}

Under the assumptions \eqref{D1} and \eqref{N1} we know that the terms in \eqref{def_sol_D} and \eqref{def_sol_N} are well-defined due to Propositions  \ref{prop_S-C-E} and \ref{prop_S-C-T-E}.

For the Dirichlet problem \eqref{D} we have the following result.

\begin{theorem}\label{D.a-priori}
	Let hypotheses \eqref{H3} and \eqref{D1} be satisfied. Then, any weak solution $u\in \Wpzero{\mathcal{H}}$ of problem \eqref{D} belongs to $L^\infty(\Omega)$ and satisfies the following a priori estimate
	\begin{align}\label{D-bound}
		\|u\|_{\infty,\Omega} \leq C \max \big\{\|u\|_{\Psi,\Omega}^{\tau_1},\|u\|_{\Psi,\Omega}^{\tau_2} \big\},
	\end{align}
	where $C,\tau_1,\tau_2$ are positive constants independent of $u$ and
	\begin{align*}
		\Psi(x,t):=t^{r(x)}+\mu(x)^{\frac{s(x)}{q(x)}}t^{s(x)}
		\quad\text{for } (x,t)\in \close\times [0,\infty).
	\end{align*}
\end{theorem}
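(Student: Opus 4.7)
The plan is to run a De~Giorgi iteration on the positive part of an arbitrary weak solution $u\in W^{1,\mathcal H}_0(\Omega)$; the same argument applied to $-u$ controls the negative part. Fix a parameter $K\geq 1$ to be chosen at the end of the argument, set $k_n:=K(2-2^{-n})$, $A_n:=\{x\in\Omega:u(x)>k_n\}$, and $w_n:=(u-k_n)_+\in W^{1,\mathcal H}_0(\Omega)$. The goal is to show that $Z_n:=\rho_\Psi(w_n)$ satisfies the geometric recursion of Lemma~\ref{leRecur}, so that $Z_n\to 0$ and hence $u_+\leq 2K$ almost everywhere for a suitable $K$.

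First I would test \eqref{def_sol_D} with $\varphi=w_n$, using $\nabla w_n=\nabla u\,\chi_{A_n}$. The coercivity \eqref{D1ii} produces on the left-hand side the modular $\alpha_2\rho_\mathcal H(\nabla w_n)$ up to a $\Psi$-type error of order $\alpha_3\int_{A_n}\Psi(x,|u|)\,\diff x+\alpha_3|A_n|$. On the right-hand side, \eqref{D1iii} provides terms involving both $|u|^{r(x)-1}$, $|u|^{s(x)-1}$ (which combine with $w_n$ to give $\Psi$-type contributions) and mixed gradient-solution products of the form $|\nabla u|^{p(x)/r'(x)}w_n$ and $\mu(x)^{1/q(x)+1/s'(x)}|\nabla u|^{q(x)/s'(x)}w_n$. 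The exponents in \eqref{D1} are tuned so that Young's inequality with a small parameter $\eps>0$ splits each mixed term into an absorbable gradient piece $\eps(|\nabla u|^{p(x)}+\mu(x)|\nabla u|^{q(x)})$ plus a $\Psi(x,w_n)$-piece. After absorbing and using $u=w_n+k_n\leq w_n+2K$ on $A_n$ this yields $\rho_\mathcal H(\nabla w_n)\leq C(1+K^{s^+})\bigl(\rho_\Psi(w_n)+|A_n|+1\bigr)$.

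Next the continuous embedding $W^{1,\mathcal H}_0(\Omega)\hookrightarrow L^{\mathcal G^*}(\Omega)$ from Proposition~\ref{prop_C-do-E}, together with the Poincar\'e-type inequality \eqref{Poincare}, converts the bound on $\rho_\mathcal H(\nabla w_n)$ into an analogous bound on $\rho_{\mathcal G^*}(w_n)$. Since $r<p^*$ and $s<q^*$ hold strictly on $\close$, continuity gives a uniform $\delta>0$ by which $(r,s)$ stays below $(p^*,q^*)$; a H\"older-type estimate in the Musielak--Orlicz setting then contributes a decay factor in $|A_{n+1}|$, schematically $\rho_\Psi(w_{n+1})\leq C\bigl(\rho_{\mathcal G^*}(w_{n+1})+1\bigr)^{\theta}|A_{n+1}|^{1-\theta}$ for some $\theta\in(0,1)$ depending on $\delta$. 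Finally the Chebyshev-type bound $(k_{n+1}-k_n)^{\min\{r^-,s^-\}}|A_{n+1}|\leq C\rho_\Psi(w_n)$ gives $|A_{n+1}|\leq C\,2^{nr^+}K^{-r^-}\rho_\Psi(w_n)$ for $K\geq 1$.

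Chaining these three estimates produces the geometric recursion $Z_{n+1}\leq Cb^n(Z_n^{1+\mu_1}+Z_n^{1+\mu_2})$ with an explicit $b>1$ and $\mu_2\geq\mu_1>0$ depending only on $p,q,r,s,N$. Lemma~\ref{leRecur} applies once $K$ is chosen large enough that $Z_0=\rho_\Psi(u)$ meets the smallness hypothesis, which via Proposition~\ref{prop_mod-nor} amounts to $K\geq C\max\bigl\{\|u\|_{\Psi,\Omega}^{\tau_1},\|u\|_{\Psi,\Omega}^{\tau_2}\bigr\}$ for explicit $\tau_1,\tau_2>0$. Then $Z_n\to 0$ forces $u\leq 2K$ a.e., and repeating the argument for $-u$ gives the matching lower bound, yielding \eqref{D-bound}. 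I expect the main obstacle to be the bookkeeping in the conversion from modular to norm: Proposition~\ref{prop_mod-nor} delivers two different power inequalities for $\|\cdot\|_{\Psi,\Omega}$ depending on whether the norm is below or above~$1$, and this dichotomy is precisely what forces the two exponents $\tau_1,\tau_2$ appearing in the final estimate.
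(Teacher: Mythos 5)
Your overall De Giorgi scheme is the same as the paper's (test with $(u-k_{n+1})_+$, coercivity plus Young to control the truncated gradient energy, Sobolev-type embedding plus Chebyshev to close a superlinear recursion, Lemma \ref{leRecur}, then modular-to-norm via Proposition \ref{prop_mod-nor}), but there is a genuine gap in how you track the dependence on the truncation level $K$, and it breaks the final step. Young's inequality applied to the terms of \ref{D1iii} produces lower-order terms in $u$ itself, i.e.\ $\Psi(x,u)$ (not $\Psi(x,w_n)$), and you convert them using $u\le w_n+2K$ on $A_n$; this injects the factor $(1+K^{s^+})$, hence positive powers of $K$, into your truncated energy estimate and therefore into the constant of the recursion $Z_{n+1}\le Cb^n(Z_n^{1+\mu_1}+Z_n^{1+\mu_2})$. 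Consequently the smallness threshold in Lemma \ref{leRecur} \emph{shrinks} as $K$ grows, while your bound $Z_0\le\rho_\Psi(u)$ does not improve with $K$ at any quantitative rate. So ``choose $K$ large enough'' goes the wrong way: what you actually obtain is an upper bound on admissible $K$ in terms of $\|u\|_{\Psi,\Omega}$, not the asserted lower bound $K\ge C\max\{\|u\|_{\Psi,\Omega}^{\tau_1},\|u\|_{\Psi,\Omega}^{\tau_2}\}$, and the iteration does not close (nor can it yield a constant independent of $u$). The paper avoids this entirely: on $A_{\kappa_{n+1}}$ one has $u\le(2^{n+2}-1)(u-\kappa_n)$ (estimate \eqref{S-est.u}), so all lower-order terms are absorbed into $Z_n$ at the price of a factor $2^{n(r^++s^+)}$ only, and $\kappa_*$ enters the recursion constant exclusively through \emph{negative} powers coming from the Chebyshev bound on $|A_{\kappa_{n+1}}|$. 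The smallness condition of Lemma \ref{leRecur} then becomes a lower bound on $\kappa_*$ in terms of $\int_\Omega\Psi(x,|u|)\,\diff x$, which is precisely what produces \eqref{D-bound}.

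A second, less central, gap is in your Step 2: with variable exponents the multiplicative H\"older splitting $\rho_\Psi(w_{n+1})\lesssim\bigl(\rho_{\mathcal{G}^*}(w_{n+1})+1\bigr)^{\theta}|A_{n+1}|^{1-\theta}$ is not available globally in the form you state; a uniform gap $r\le p^*-\delta$, $s\le q^*-\delta$ does not by itself yield a power of $|A_{n+1}|$ when the exponents vary over $\Omega$. The paper resolves this by localization: cover $\overline{\Omega}$ by finitely many small balls on which the frozen exponents $r_i^{\pm},s_i^{\pm}$ still lie strictly below $(p^*)_i^-,(q^*)_i^-$, apply constant-exponent H\"older with a fixed $\eps$ together with the local embeddings $W^{1,p(\cdot)}(\Omega_i)\hookrightarrow L^{r_i^{\pm}+\eps}(\Omega_i)$ and $W^{1,\mathcal{H}}(\Omega_i)\hookrightarrow L^{\Psi_\star}(\Omega_i)$, and only then sum over the cover. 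Some device of this kind (or a carefully justified variable-exponent H\"older inequality involving norms of characteristic functions) is needed to make your schematic estimate rigorous.
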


\begin{proof}
	The proof is based on the ideas used in Ho-Kim \cite{Ho-Kim-2019}, Winkert-Zacher \cite{Winkert-Zacher-2012,Winkert-Zacher-2015} and will use Lemma \ref{leRecur}. Let $u$ be a weak solution of problem \eqref{D}.

	\vskip5pt
	{\bf Step 1.\,Defining the recursion sequence and basic estimates.}

	For each $n\in\N_0$, we define
	\begin{align*}
		Z_n:=\int_{A_{\kappa_n} }\left[(u-\kappa_n)^{r(x)}+\mu(x)^{\frac{s(x)}{q(x)}}(u-\kappa_n)^{s(x)}\right]\,\diff x,
	\end{align*}
	where
	\begin{align}\label{def.Ak}
		A_\kappa:=\{x\in\Omega\,:\,\ u(x)>\kappa\}, \quad  \kappa\in\R
	\end{align}
	and
	\begin{align}\label{def.kn}
		\kappa_n:=\kappa_*\left(2-\frac{1}{2^n}\right), \quad n\in \mathbb{N}_0
	\end{align}
	with $\kappa_*>0$ to be specified later. Obviously,
	\begin{align*}
		\kappa_n \nearrow 2\kappa_*
		\quad \text{and}\quad
		\kappa_* \leq \kappa_n <2\kappa_* \quad \text{for all }n\in \mathbb{N}_0.
	\end{align*}
	It is clear that
	\begin{align}\label{S-Zn.decreasing}
		A_{\kappa_{n+1}}\subset A_{\kappa_n}
		\quad \text{and}\quad
		Z_{n+1}\leq Z_n \quad\text{for all }n\in \mathbb{N}_0.
	\end{align}
	Moreover, from the estimates
	\begin{align*}
		u(x)- \kappa_n \geq u(x)\l(1-\frac{\kappa_n}{\kappa_{n+1}}\r) = \frac{u(x)}{2^{n+2}-1} \quad \text{for a.\,a.\,} x \in A_{\kappa_{n+1}}
	\end{align*}
	and
	\begin{align*}
		|A_{\kappa_{n+1}}|
		\leq \int_{A_{\kappa_{n+1}}} \left (\frac{u-\kappa_n}{\kappa_{n+1}-\kappa_n}\right )^{r(x)} \,\diff x
		\leq \int_{A_{\kappa_{n}}} \frac{2^{r(x)(n+1)}}{\kappa_*^{r(x)}} (u-\kappa_n)^{r(x)}\,\diff x,
	\end{align*}
	we obtain the following inequalities
	\begin{align} \label{S-est.u}
		u(x)\leq (2^{n+2}-1)(u(x)-\kappa_n)
		\quad \text{for a.\,a.\,}x\in A_{\kappa_{n+1}}\quad \text{for all } n\in\N_0
	\end{align}
	and
	\begin{align} \label{S-|A_{k_{n+1}}|}
		|A_{\kappa_{n+1}}| \leq \left(\kappa_{*}^{-r^{-}}+\kappa_{*}^{-r^{+}}\right)2^{(n+1)r^+} Z_n\leq 2\left(1+\kappa_{*}^{-r^{+}}\right)2^{(n+1)r^+} Z_n\quad \text{for all } n\in\N_0.
	\end{align}
	By the assumptions on the exponents we have
	\begin{align*}
		&\int_{A_{\kappa_{n+1}}}\left[(u-\kappa_{n+1})^{p(x)}+\mu(x)(u-\kappa_{n+1})^{q(x)}\right]\,\diff x\\
		&\leq \int_{A_{\kappa_{n+1}}}\left[(u-\kappa_{n+1})^{r(x)}+\mu(x)^{\frac{s(x)}{q(x)}}(u-\kappa_{n+1})^{s(x)}+2\right]\,\diff x.
	\end{align*}
	Combining this with \eqref{S-Zn.decreasing} and \eqref{S-|A_{k_{n+1}}|} gives
	\begin{align}\label{S-D.u}
		\int_{A_{\kappa_{n+1}}}\left[(u-\kappa_{n+1})^{p(x)}+\mu(x)(u-\kappa_{n+1})^{q(x)}\right]\,\diff x\leq 5\left(1+\kappa_{*}^{-r^{+}}\right)2^{(n+1)r^+} Z_n
	\end{align}
	for all $n\in\N_0$.

	Next, we are going to show the following estimate for truncated energies
	\begin{align}\label{S-D.grad}
		\int_{A_{\kappa_{n+1}}}\left[|\nabla u|^{p(x)}+\mu(x)|\nabla u|^{q(x)}\right]\,\diff x\leq  C_1(1+\kappa_{*}^{-r^+})2^{n(r^++s^+)}Z_n
		\quad\text{for all } n\in\N_0.
	\end{align}
	Here and in the rest of the proof, $C_i$ ($i\in\N$) are positive constants independent of $u$, $n$ and $\kappa_{*}$. To this end, testing \eqref{def_sol_D} by $\varphi =(u-\kappa_{n+1})_{+} \in W_0^{1,\mathcal{H}}(\Omega)$ gives
	\begin{align}\label{S-D.var.Eq}
		\int_{A_{\kappa_{n+1}}}
		\mathcal{A}(x,u,\nabla u) \cdot \nabla u \,\diff x =\int_{A_{\kappa_{n+1}} }\mathcal{B}(x,u,\nabla u)(u-\kappa_{n+1})\,\,\diff x.
	\end{align}
	Since $u\geq u-\kappa_{n+1} >0$ on $A_{\kappa_{n+1}},$ using \ref{D1ii} and \ref{D1iii} along with Young's inequality and the fact that $u\leq u^{r(x)}+1$ on $A_{\kappa_{n+1}}$, we obtain the estimates
	\begin{align*}
		&\int_{A_{\kappa_{n+1}}} \mathcal{A}(x,u,\nabla u)\cdot \nabla u\,\diff x \\
		&\geq \alpha_2\int_{A_{\kappa_{n+1}} }\left[|\nabla u|^{p(x)}+\mu(x)|\nabla u|^{q(x)}\right]\,\diff x-\alpha_3\int_{A_{\kappa_{n+1}} }\left[u^{r(x)}+\mu(x)^{\frac{s(x)}{q(x)}}u^{s(x)}+1\right]\,\diff x
	\end{align*}
	and
	\begin{align*}
		&\int_{A_{\kappa_{n+1}}} \mathcal{B}(x,u,\nabla 	u)(u-\kappa_{n+1})\,\diff x\\
		&\leq \beta\int_{A_{\kappa_{n+1}} }\left(u^{r(x)-1}+\mu(x)^{\frac{s(x)}{q(x)}}u^{s(x)-1}+|\nabla u|^{\frac{p(x)}{r'(x)}} +\mu(x)^{\frac{1}{q(x)}+\frac{1}{s'(x)}}|\nabla u|^{\frac{q(x)}{s'(x)}}+1\right)u\,\diff x\\
		&\leq \frac{\alpha_2}{2}\int_{A_{\kappa_{n+1}} }\left[|\nabla u|^{p(x)}+\mu(x)|\nabla u|^{q(x)}\right]\,\diff x+C_2\int_{A_{\kappa_{n+1}} }\left[u^{r(x)}+\mu(x)^{\frac{s(x)}{q(x)}}u^{s(x)}+1\right]\,\diff x.
	\end{align*}
	Combining the last two estimates with \eqref{S-D.var.Eq} and then using \eqref{S-est.u} it follows
	\begin{align*}
		&\int_{A_{\kappa_{n+1}} }\left[|\nabla u|^{p(x)}+\mu(x)|\nabla u|^{q(x)}\right]\,\diff x\\
		&\leq C_3\int_{A_{\kappa_{n+1}} }\left[u^{r(x)}+\mu(x)^{\frac{s(x)}{q(x)}}u^{s(x)}+1\right]\,\diff x\\
		&\leq C_3\int_{A_{\kappa_{n+1}}}\left(\left[(2^{n+2}-1)(u-\kappa_{n})\right]^{r(x)}+\mu(x)^{\frac{s(x)}{q(x)}}\left[(2^{n+2}-1)(u-\kappa_{n})\right]^{s(x)}\right)\,\diff x+C_3|A_{\kappa_{n+1}}|\\
		&\leq C_42^{n(r^++s^+})\int_{A_{\kappa_{n}}}\left[(u-\kappa_{n})^{r(x)}+\mu(x)^{\frac{s(x)}{q(x)}}(u-\kappa_{n})^{s(x)}\right]\,\diff x+C_3|A_{\kappa_{n+1}}|.
	\end{align*}
	From this and \eqref{S-|A_{k_{n+1}}|} we obtain \eqref{S-D.grad}.

	\vskip5pt
	\textbf{Step 2. Estimating $Z_{n+1}$ by $Z_n$.}

	In the following, we estimate $Z_{n+1}$ by $Z_n$ with $n\in\N_0$. To this end, let $\{B_i\}_{i=1}^m$ be a finite open covering of $\overline{\Omega}$, where $B_i$ ($i\in \{1,\cdots,m\}$) are open balls of radius $R$ in $\R^N$ such that $\Omega_i:=B_i\cap\Omega$ ($i\in \{1,\cdots,m\}$) are Lipschitz domains. We may take $R$ sufficiently small such that
	\begin{align}\label{S-D.Omega_i}
		|\Omega_{i}|<1\quad \text{for all } i\in\{1,\cdots,m\},
	\end{align}
	and
	\begin{align}\label{S-D.loc.exp}
		p_i^+<r^{-}_{i}\leq r_i^+<\left(p^*\right)^-_i\ \ \text{and} \ \ q^+<s_i^-\leq s_i^+<\left(q^*\right)^-_i\quad \text{for all } i\in\{1,\cdots,m\},
	\end{align}
	where for a function $f\in C\left(\overline{\Omega}\right)$ and $i\in\{1,\cdots,m\}$, we denote
	\begin{align*}
		f_i^+:=\max_{x\in\overline{\Omega}_i} f(x)
		\quad \text{and} \quad f^{-}_{i}:=\min_{x\in\overline{\Omega}_i} f(x).
	\end{align*}
	Let $n\in\N_0$ and denote $v_n:=(u-\kappa_{n+1})_+$. For each $i\in\{1,\cdots,m\}$, $\hat{\alpha}>0$, and $\hat{\beta}>0$, we denote
	\begin{align*}
		T_{n,i}(\hat{\alpha},\hat{\beta}):=\int_{\Omega_{i}}\left[v_n^{\hat{\alpha}}+\mu(x)^{\frac{\hat{\beta}}{q(x)}}v_n^{\hat{\beta}}\right]\,\diff x.
	\end{align*}
	We have
	\begin{align*}
		Z_{n+1}=\int_{\Omega}\left[v_n^{r(x)}+\mu(x)^{\frac{s(x)}{q(x)}}v_n^{s(x)}\right]\,\diff x
		\leq \sum_{i=1}^m\int_{\Omega_{i}}\left[v_n^{r(x)}+\mu(x)^{\frac{s(x)}{q(x)}}v_n^{s(x)}\right]\,\diff x.
	\end{align*}
	From this and the basic inequality
	\begin{align}\label{basic.ineq}
		t^{\tilde{\beta}}\leq t^{\tilde{\alpha}}+t^{\tilde{\gamma}}
		\quad \text{for all } t\geq 0 \text{ and for all } \tilde{\alpha}, \tilde{\beta}, \tilde{\gamma} \text{ with } 0<\tilde{\alpha}\leq\tilde{\beta}\leq\tilde{\gamma},
	\end{align}
	we obtain
	\begin{align}\label{S-decompose1}
		Z_{n+1}\leq \sum_{i=1}^m\big[T_{n,i}(r_i^-,s_i^-)+T_{n,i}(r_i^+,s_i^+)\big].
	\end{align}
	From \eqref{S-D.loc.exp}, we can fix $\eps$ such that
	\begin{align}\label{S-D-eps}
		0<\eps<\min_{1\leq i\leq m}\ \min\{\left(p^*\right)^-_i-r_i^+, \left(q^*\right)^-_i-s_i^+\}.
	\end{align}
	Let $i\in\{1,\cdots,m\}$ and let $\star\in\{+,-\}$. By H\"older's inequality and \eqref{S-D.Omega_i} we have
	\begin{align}\label{S-de-E1}
		\begin{split}
			T_{n,i}(r_i^\star,s_i^\star)&=\int_{A_{\kappa_{n+1}}\cap \Omega_i}\left[v_n^{r_i^\star}+\mu(x)^{\frac{s_i^\star}{q(x)}}v_n^{s_i^\star}\right]\,\diff x\\
			&\leq \left(\int_{\Omega_{i}}v_n^{r_i^\star+\eps}\,\diff x\right)^{\frac{r_i^\star}{r_i^\star+\eps}}|A_{\kappa_{n+1}}\cap \Omega_i|^{\frac{\eps}{r_i^\star+\eps}}\\
			&\quad +\left(\int_{\Omega_{i}}\mu(x)^{\frac{s_i^\star+\eps}{q(x)}}v_n^{s_i^\star+\eps}\,\diff x\right)^{\frac{s_i^\star}{s_i^\star+\eps}}|A_{\kappa_{n+1}}\cap \Omega_i|^{\frac{\eps}{s_i^\star+\eps}}\\
			&\leq |A_{\kappa_{n+1}}\cap \Omega_i|^{\frac{\eps}{r^++s^++\eps}}\left[\left(\int_{\Omega_{i}}v_n^{r_i^\star+\eps}\,\diff x\right)^{\frac{r_i^\star}{r_i^\star+\eps}}+\left(\int_{\Omega_{i}}\mu(x)^{\frac{s_i^\star+\eps}{q(x)}}v_n^{s_i^\star+\eps}\,\diff x\right)^{\frac{s_i^\star}{s_i^\star+\eps}}\right].
		\end{split}
	\end{align}
	Denote
	\begin{align*}
		\Psi_\star(x,t):=t^{r_i^\star+\eps}+\mu(x)^{\frac{s_i^\star+\eps}{q(x)}}t^{s_i^\star+\eps}.
	\end{align*}
	By \eqref{S-D-eps}, it holds
	\begin{align*}
		r_i^\star+\eps<\left(p^*\right)^-_i
		\quad \text{and}\quad s_i^\star+\eps<\left(q^*\right)^-_i.
	\end{align*}
	Hence, we have
	\begin{align}\label{S-loc-emb'}
	W^{1,p(\cdot)}(\Omega_i)\hookrightarrow	W^{1,p_i^-}(\Omega_i)\hookrightarrow L^{	r_i^\star+\eps}(\Omega_i)
	\end{align}
and
\begin{align}\label{S-loc-emb}
 W^{1,\mathcal{H}}(\Omega_i)\hookrightarrow L^{\Psi_\star}(\Omega_i)
\end{align}
in view of Proposition \ref{proposition_embeddings} (for the case $\Omega=\Omega_i$) and Proposition \ref{prop_S-C-E} (for the case $\Omega=\Omega_i$), respectively. Taking the embedding \eqref{S-loc-emb'} and Proposition \ref{prop_mod-nor2} (for the case $\mu\equiv 0$ and $\Omega=\Omega_i$) into account we have
	\begin{align}\label{S-de-E2}
		\left(\int_{\Omega_{i}}v_n^{r_i^\star+\eps}\,\diff x\right)^{\frac{r_i^\star}{r_i^\star+\eps}}
			\leq C_5\|v_n\|_{1,p(\cdot),\Omega_i}^{r_i^\star}\leq C_5\left(R_{n,i}^{\frac{r_i^\star}{p_i^-}}+R_{n,i}^{\frac{r_i^\star}{p_i^+}} \right),
	\end{align}
	where
\begin{align*}
	R_{n,i}:=\int_{\Omega_i}\left[|\nabla v_n|^{p(x)}+\mu(x)|\nabla v_n|^{q(x)}\right]\,\diff x + \int_{\Omega_i}\left[ v_n^{p(x)}+\mu(x)v_n^{q(x)}\right]\,\diff x.
\end{align*}
On the other hand, by invoking the embedding \eqref{S-loc-emb} and Proposition \ref{prop_mod-nor2} we have
\begin{align}\label{S-de-E2'}
\left(\int_{\Omega_{i}}\mu(x)^{\frac{s_i^\star+\eps}{q(x)}}v_n^{s_i^\star+\eps}\,\diff x\right)^{\frac{s_i^\star}{s_i^\star+\eps}}
	\leq \|v_n\|_{\Psi_\star,\Omega_i}^{s_i^\star}\leq C_6\|v_n\|_{1,\mathcal{H},\Omega_i}^{s_i^\star}\leq C_6\left(R_{n,i}^{\frac{s_i^\star}{p_i^-}}+R_{n,i}^{\frac{s_i^\star}{q_i^+}} \right).
\end{align}
	From \eqref{S-de-E1}, \eqref{S-de-E2} and \eqref{S-de-E2'}, we obtain
	\begin{align}\label{S-de-E3}
		T_{n,i}(r_i^\star,s_i^\star)
		\leq C_7|A_{\kappa_{n+1}}\cap \Omega_i|^{\frac{\eps}{r^++s^++\eps}}\left(R_{n,i}^{\frac{r_i^\star}{p_i^-}}+R_{n,i}^{\frac{r_i^\star}{p_i^+}}+R_{n,i}^{\frac{s_i^\star}{p_i^-}}+R_{n,i}^{\frac{s_i^\star}{q_i^+}}\right).
	\end{align}
	Invoking \eqref{S-de-E3} and  \eqref{basic.ineq} we infer
	\begin{align}\label{S-de-E5}
		T_{n,i}(r_i^\star,s_i^\star)\leq C_8|A_{\kappa_{n+1}}|^{\frac{\eps}{r^++s^++\eps}}\left(R_{n}^{1+\gamma_1}+R_{n}^{1+\gamma_2}\right),
	\end{align}
	where
	\begin{align*}
		R_{n}:=\int_{\Omega}\left[|\nabla v_n|^{p(x)}+\mu(x)|\nabla v_n|^{q(x)}\right]\,\diff x	+\int_{\Omega}\left[ v_n^{p(x)}+\mu(x)v_n^{q(x)}\right]\,\diff x
	\end{align*}
	and
	\begin{align*}
		0<\gamma_1:=\min_{1\leq i\leq m} \min\left\{\frac{r_i^-}{p_i^+},\frac{s_i^-}{q_i^+}\right\}-1\leq\gamma_2:=\max_{1\leq i\leq m} \max\left\{\frac{r_i^+}{p_i^-},\frac{s_i^+}{p_i^-}\right\}-1.
	\end{align*}
	Using the estimate \eqref{S-de-E5}, we deduce from \eqref{S-decompose1} that
	\begin{align}\label{S-de-E6}
		Z_{n+1}\leq 	C_9|A_{\kappa_{n+1}}|^{\frac{\eps}{r^++s^++\eps}}\left(R_{n}^{1+\gamma_1}+R_{n}^{1+\gamma_2}\right).
	\end{align}
	On the other hand, combining \eqref{S-D.u} with \eqref{S-D.grad} gives
	\begin{align*}
		R_n\leq C_{10}\left(1+\kappa_{*}^{-r^+}\right)2^{n(r^++s^+)}Z_n\quad \text{for all } n\in\N_0.
	\end{align*}
	Thus,
	\begin{align}\label{S-de-E7}
		R_{n}^{1+\gamma_1}+R_{n}^{1+\gamma_2}\leq C_{11}\left(1+\kappa_{*}^{-r^+(1+\gamma_2)}\right)2^{n(r^++s^+)(1+\gamma_2)}\left(Z_{n}^{1+\gamma_1}+Z_{n}^{1+\gamma_2}\right).
	\end{align}
	Moreover, \eqref{S-|A_{k_{n+1}}|} implies that
	\begin{align}\label{S-de-E8}
		|A_{\kappa_{n+1}}|^{\frac{\eps}{r^++s^++\eps}}\leq C_{12}\left(\kappa_{*}^{-\frac{\eps r^{-}}{r^++s^++\eps}}+\kappa_{*}^{-\frac{\eps r^{+}}{r^++s^++\eps}}\right)2^{\frac{\eps r^+}{r^++s^++\eps}n} Z_n^{\frac{\eps}{r^++s^++\eps}}.
	\end{align}
	From \eqref{S-de-E6}, \eqref{S-de-E7} and \eqref{S-de-E8} along with \eqref{basic.ineq} we arrive at
	\begin{align}\label{S-D-ReIneq}
		Z_{n+1}
		\leq
		C_{13}\round{\kappa_{*}^{-\mu_1}+\kappa_{*}^{-\mu_2}}b^n\round{Z_{n}^{1+\delta_1}+Z_{n}^{1+\delta_2}}\quad \text{for all } n\in\mathbb{N}_0,
	\end{align}
	where
	\begin{align}\label{S-mu}
		\begin{split}
			0&<\mu_1:=\frac{\eps r^{-}}{r^++s^++\eps}<\mu_2:=r^+(1+\gamma_2)+\frac{\eps r^{+}}{r^++s^++\eps},\\
			1&<b:=2^{(r^++s^+)(1+\gamma_2)+\frac{\eps r^+}{r^++s^++\eps}}
		\end{split}
	\end{align}
	and
	\begin{align}\label{S-delta}
		0<\delta_1:=\gamma_1+\frac{\eps}{r^++s^++\eps}\leq \delta_2:=\gamma_2+\frac{\eps}{r^++s^++\eps}.
	\end{align}

	\vskip5pt
	\textbf{Step 3. A priori bounds.}

	In this step, we will obtain \eqref{D-bound} by using an argument similar as in Ho-Kim \cite[Proof of Theorem 4.2]{Ho-Kim-2019}. From  Lemma \ref{leRecur}, we get using \eqref{S-D-ReIneq} that
	\begin{align}\label{apply.le.1}
		Z_n\to 0\quad \text{as } n\to \infty
	\end{align}
	provided
	\begin{align}\label{apply.le.2}
		Z_0
		\leq
		\min\left\{(2C_{13}(\kappa_{*}^{-\mu_1}+\kappa_{*}^{-\mu_2}))^{-\frac{1}{\delta_1}}b^{-\frac{1}{\delta_1^2}},
		\round{2C_{13}\round{\kappa_{*}^{-\mu_1}+\kappa_{*}^{-\mu_2}}}^{-\frac{1}{\delta_2}}b^{-\frac{1}{\delta_1\delta_2}-\frac{\delta_2-\delta_1}{\delta_2^2}}\right\}.
	\end{align}
	In order to specify $\kappa_{*}$ satisfying \eqref{apply.le.2}, we first estimate
	\begin{align}\label{apply.le.2'new}
		Z_0= \int_{\Omega}\left[(u-\kappa_{*})_+^{p(x)}+\mu(x)(u-\kappa_{*})_+^{q(x)}\right]\,\diff x\le
		\int_{\Omega}\Psi(x,|u|)\,\diff x.
	\end{align}
	Note that
	\begin{align}\label{apply.le.3}
		\begin{split}
			\int_{\Omega}\Psi(x,|u|)\,\diff x
			&\leq (2C_{13})^{-\frac{1}{\delta_1}}(\kappa_{*}^{-\mu_1}+\kappa_{*}^{-\mu_2})^{-\frac{1}{\delta_1}}b^{-\frac{1}{\delta_1^2}},\\
			\int_{\Omega}\Psi(x,|u|)\,\diff x
			&\leq
			(2C_{13})^{-\frac{1}{\delta_2}}(\kappa_{*}^{-\mu_1}+\kappa_{*}^{-\mu_2})^{-\frac{1}{\delta_2}}b^{-\frac{1}{\delta_1\delta_2}-\frac{\delta_2-\delta_1}{\delta_2^2}}
		\end{split}
	\end{align}
	is equivalent to
	\begin{align*}
		\kappa_{*}^{-\mu_1}+\kappa_{*}^{-\mu_2}
		&\leq (2C_{13})^{-1}b^{-\frac{1}{\delta_1}}\round{\int_{\Omega}\Psi(x,|u|)\,\diff x}^{-\delta_1},\\
		\kappa_{*}^{-\mu_1}+\kappa_{*}^{-\mu_2}
		&\leq
		(2C_{13})^{-1}b^{-\frac{1}{\delta_1}-\frac{\delta_2-\delta_1}{\delta_2}}\round{\int_{\Omega}\Psi(x,|u|)\,\diff x}^{-\delta_2}.
	\end{align*}
	On the other hand we have that
	\begin{align*}
		2\kappa_{*}^{-\mu_1}
		&\leq (2C_{13})^{-1}b^{-\frac{1}{\delta_1}-\frac{\delta_2-\delta_1}{\delta_2}}\min\left\{\round{\int_{\Omega}\Psi(x,|u|)\,\diff x}^{-\delta_1}, \round{\int_{\Omega}\Psi(x,|u|)\,\diff x}^{-\delta_2}\right\},\\
		2\kappa_{*}^{-\mu_2}
		&\leq (2C_{13})^{-1}b^{-\frac{1}{\delta_1}-\frac{\delta_2-\delta_1}{\delta_2}}\min\left\{\round{\int_{\Omega}\Psi(x,|u|)\,\diff x}^{-\delta_1},
		\round{\int_{\Omega}\Psi(x,|u|)\,\diff x}^{-\delta_2}\right\},
	\end{align*}
	is equivalent to
	\begin{align}\label{apply.le.4}
		\begin{split}
			\kappa_{*}
			&\geq (4C_{13})^{\frac{1}{\mu_1}}b^{\frac{1}{\mu_1}\round{\frac{1}{\delta_1}+\frac{\delta_2-\delta_1}{\delta_2}}}\max\left\{\round{\int_{\Omega}\Psi(x,|u|)\,\diff x}^{\frac{\delta_1}{\mu_1}}, \round{\int_{\Omega}\Psi(x,|u|)\,\diff x}^{\frac{\delta_2}{\mu_1}}\right\},\\
			\kappa_{*}
			&\geq
			(4C_{13})^{\frac{1}{\mu_2}}b^{\frac{1}{\mu_2}\round{\frac{1}{\delta_1}+\frac{\delta_2-\delta_1}{\delta_2}}}\max\left\{\round{\int_{\Omega}\Psi(x,|u|)\,\diff x}^{\frac{\delta_1}{\mu_2}},
			\round{\int_{\Omega}\Psi(x,|u|)\,\diff x}^{\frac{\delta_2}{\mu_2}}\right\}.
		\end{split}
	\end{align}
	Therefore, by choosing
	\begin{align*}
		\kappa_{*}=\max\left\{(4C_{13})^{\frac{1}{\mu_1}},(4C_{13})^{\frac{1}{\mu_2}}\right\}b^{\frac{1}{\mu_1}\round{\frac{1}{\delta_1}+\frac{\delta_2-\delta_1}{\delta_2}}}\max\left\{\round{\int_{\Omega}\Psi(x,|u|)\,\diff x}^{\frac{\delta_1}{\mu_2}},
		\round{\int_{\Omega}\Psi(x,|u|)\,\diff x}^{\frac{\delta_2}{\mu_1}}\right\},
	\end{align*}
	\eqref{apply.le.4} holds and so \eqref{apply.le.3} follows. From this and \eqref{apply.le.2'new}, we derive \eqref{apply.le.2}. Hence, \eqref{apply.le.1} holds. Meanwhile, by Lebesgue's dominated convergence theorem, we have
	\begin{align*}
		Z_n&=\int_{\Omega}\left[(u-\kappa_{n})_+^{r(x)}+\mu(x)^{\frac{s(x)}{q(x)}}(u-\kappa_{n})_+^{s(x)}\right]\,\diff x\\
		&\to \int_{\Omega}\left[(u-2\kappa_{*})_+^{r(x)}+\mu(x)^{\frac{s(x)}{q(x)}}(u-2\kappa_{*})_+^{s(x)}\right]\,\diff x
		\quad \text{as }  n\to \infty.
	\end{align*}
	This implies that
	\begin{align*}
		\int_{\Omega}\left[(u-2\kappa_{*})_+^{r(x)}+\mu(x)^{\frac{s(x)}{q(x)}}(u-2\kappa_{*})_+^{s(x)}\right]\,\diff x=0
	\end{align*}
	and so
	\begin{align*}
		\esssup_{x\in\Omega} u(x)\leq 2\kappa_{*}.
	\end{align*}

	Replacing $u$ with $-u$ in the arguments above, we obtain
	\begin{align*}
		\esssup_{x\in\Omega} (-u)(x)\leq 2\kappa_{*}.
	\end{align*}
	Therefore,
	\begin{align}\label{applying.le.5}
		\|u\|_{\infty,\Omega}\le
		C\max\left\{\round{\int_{\Omega}\Psi(x,|u|)\,\diff x}^{\frac{\delta_1}{\mu_2}},
		\round{\int_{\Omega}\Psi(x,|u|)\,\diff x}^{\frac{\delta_2}{\mu_1}}\right\},
	\end{align}
	where $C$ is a positive constant independent of $u$. Note that by Proposition \ref{prop_mod-nor}, we have the following relation
	\begin{align*}
		\int_{\Omega}\Psi(x,|u|)\,\diff x\leq \max\left\{\|u\|_{\Psi,\Omega}^{r^-},\|u\|_{\Psi,\Omega}^{s^+}\right\}.
	\end{align*}
	Combining this and \eqref{applying.le.5}, we derive \eqref{D-bound} and the proof is complete.
\end{proof}

Next, we want to prove a priori bounds for problem \eqref{N}. We have the following result.

\begin{theorem}\label{N.a-priori}
	Let hypotheses \eqref{H3}, \eqref{D1} and \eqref{N1} be satisfied. Then, any weak solution $u\in \Wp{\mathcal{H}}$ of problem \eqref{N} belongs to $L^\infty(\Omega)\cap L^\infty(\Gamma)$ and satisfies the following a priori estimate
	\begin{align}\label{N.L^infity}
		\|u\|_{\infty,\Omega}+\|u\|_{\infty,\Gamma} \leq C \max \left\{\left(\|u\|_{\Psi,\Omega}+\|u\|_{\Upsilon,\Gamma}\right)^{\tau_1},\left(\|u\|_{\Psi,\Omega}+\|u\|_{\Upsilon,\Gamma}\right)^{\tau_2} \right\},
	\end{align}
	where $C,\tau_1,\tau_2$ are positive constants independent of $u$ and
	\begin{align*}
		\Psi(x,t)&:=t^{r(x)}+\mu(x)^{\frac{s(x)}{q(x)}}t^{s(x)},\quad
		\Upsilon(x,t):=t^{\ell(x)}+\mu(x)^{\frac{h(x)}{q(x)}}t^{h(x)}
	\end{align*}
	for all $(x,t)\in\overline{\Omega}\times [0,\infty)$.
\end{theorem}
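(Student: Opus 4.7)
The plan is to mimic the De Giorgi iteration of Theorem \ref{D.a-priori}, but to carry along a boundary contribution throughout. Define $\kappa_n:=\kappa_*(2-2^{-n})$ and the joint excess quantity
\begin{align*}
Z_n:=\int_{A_{\kappa_n}}\!\!\bigl[(u-\kappa_n)^{r(x)}+\mu(x)^{\frac{s(x)}{q(x)}}(u-\kappa_n)^{s(x)}\bigr]\diff x+\int_{A^\Gamma_{\kappa_n}}\!\!\bigl[(u-\kappa_n)^{\ell(x)}+\mu(x)^{\frac{h(x)}{q(x)}}(u-\kappa_n)^{h(x)}\bigr]\diff\sigma,
\end{align*}
where $A_\kappa:=\{x\in\Omega:u(x)>\kappa\}$ and $A^\Gamma_\kappa:=\{x\in\Gamma:u(x)>\kappa\}$. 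Using $(u-\kappa_{n+1})_+\in W^{1,\mathcal{H}}(\Omega)$ as test function in \eqref{def_sol_N} and splitting the right-hand side into a bulk and a boundary piece, the structure hypothesis \eqref{D1ii} provides coercivity of the left-hand side, while \eqref{D1iii} together with Young's inequality allows one to absorb the gradient part of the bulk integrand. The new ingredient is the boundary term, estimated by means of \eqref{N1}: on $A^\Gamma_{\kappa_{n+1}}$ the bound
\begin{align*}
|\mathcal{C}(x,u)(u-\kappa_{n+1})|\le\gamma\bigl[u^{\ell(x)}+\mu(x)^{\frac{h(x)}{q(x)}}u^{h(x)}+u\bigr]
\end{align*}
combined with $u\le(2^{n+2}-1)(u-\kappa_n)$ on $A_{\kappa_{n+1}}^\Gamma$ yields an upper bound of the form $C\,2^{n(\ell^++h^+)}Z_n$. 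Thus one obtains a truncated energy estimate
\begin{align*}
\int_{A_{\kappa_{n+1}}}\!\!\bigl[|\nabla u|^{p(x)}+\mu(x)|\nabla u|^{q(x)}\bigr]\diff x+\int_{A_{\kappa_{n+1}}}\!\!\bigl[v_n^{p(x)}+\mu(x)v_n^{q(x)}\bigr]\diff x\le C(1+\kappa_*^{-\alpha})\,2^{n\beta}Z_n,
\end{align*}
where $v_n=(u-\kappa_{n+1})_+$ and $\alpha,\beta>0$ depend only on $r^\pm,s^\pm,\ell^\pm,h^\pm$.

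For the interpolation step, cover $\close$ by finitely many Lipschitz pieces $\Omega_i=B_i\cap\Omega$ so that $|\Omega_i|<1$, $|\Gamma_i|_\sigma<1$ with $\Gamma_i=B_i\cap\Gamma$, and the local oscillations of $p,q,r,s,\ell,h$ are small enough that $p_i^+<r_i^-$, $q_i^+<s_i^-$, $r_i^+<(p^*)^-_i$, $s_i^+<(q^*)^-_i$, $\ell_i^+<(p_*)^-_i$, $h_i^+<(q_*)^-_i$. On each $\Omega_i$ apply H\"older's inequality with a small parameter $\varepsilon>0$ to gain a factor $|A_{\kappa_{n+1}}\cap\Omega_i|^{\varepsilon/(r^++s^++\varepsilon)}$ for the bulk contribution and a factor $|A^\Gamma_{\kappa_{n+1}}\cap\Gamma_i|_\sigma^{\varepsilon/(\ell^++h^++\varepsilon)}$ for the boundary contribution. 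The critical-plus-$\varepsilon$ exponents are still subcritical, so Proposition \ref{prop_S-C-E} (on $\Omega_i$) controls the bulk part by a power of the local full modular $R_{n,i}$, while Proposition \ref{prop_S-C-T-E} (on $\Omega_i$, with the corresponding $\ell_i^\star+\varepsilon,h_i^\star+\varepsilon$) controls the boundary part by another power of the same $R_{n,i}$. Summing the finitely many local estimates, together with \eqref{S-|A_{k_{n+1}}|} and its boundary analogue $|A^\Gamma_{\kappa_{n+1}}|_\sigma\le C(1+\kappa_*^{-\ell^+})2^{n\ell^+}Z_n$, yields the recursion
\begin{align*}
Z_{n+1}\le K\bigl(\kappa_*^{-\mu_1}+\kappa_*^{-\mu_2}\bigr)b^n\bigl(Z_n^{1+\delta_1}+Z_n^{1+\delta_2}\bigr),\qquad n\in\N_0,
\end{align*}
for suitable $b>1$, $\mu_2\ge\mu_1>0$, $\delta_2\ge\delta_1>0$ depending only on the data.

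With the recursion in hand, Lemma \ref{leRecur} applies: choosing $\kappa_*$ as a sufficiently large positive power of $\int_\Omega\Psi(x,|u|)\diff x+\int_\Gamma\Upsilon(x,|u|)\diff\sigma$ (whose finiteness comes from the embeddings of Propositions \ref{prop_S-C-E} and \ref{prop_S-C-T-E}) guarantees $Z_n\to 0$, whence by dominated convergence $u\le 2\kappa_*$ a.\,e.\,in $\Omega$ and $\sigma$-a.\,e.\,on $\Gamma$; running the argument with $-u$ in place of $u$ gives the two-sided bound and the estimate \eqref{N.L^infity} after converting moduli into $\|\cdot\|_{\Psi,\Omega}$ and $\|\cdot\|_{\Upsilon,\Gamma}$ via Proposition \ref{prop_mod-nor}. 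I expect the main technical obstacle to be the boundary side of the interpolation: one must simultaneously gain a small power of $|A^\Gamma_{\kappa_{n+1}}|_\sigma$ and control the boundary modular of $v_n$ by the \emph{same} bulk quantity $R_n$ that already bounds the Dirichlet energy, so that the boundary and interior recursions collapse into one; this is precisely what the new critical trace embedding in Proposition \ref{prop_S-C-T-E} makes possible, but one needs to track carefully that the local exponents $\ell_i^\star+\varepsilon,h_i^\star+\varepsilon$ remain strictly below $(p_*)^-_i,(q_*)^-_i$ and that the resulting exponents $\delta_1,\delta_2,\mu_1,\mu_2$ are positive, which forces the gap conditions $\ell<p_*$, $h<q_*$ in \eqref{N1}.
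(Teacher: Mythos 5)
Your proposal is correct and follows essentially the same route as the paper: the paper also tests with $(u-\kappa_{n+1})_+$, adds a boundary excess to the bulk one (writing $X_n=Z_n+Y_n$ where you use a single combined quantity, an immaterial difference), derives the joint truncated-energy estimate from \eqref{D1} and \eqref{N1}, localizes with a small covering and H\"older-with-$\eps$ using Propositions \ref{prop_S-C-E} and \ref{prop_S-C-T-E} on each $\Omega_i$, and closes via Lemma \ref{leRecur} with $\kappa_*$ chosen through $\int_\Omega\Psi(x,|u|)\,\diff x+\int_\Gamma\Upsilon(x,|u|)\,\diff\sigma$, finishing with $-u$ and Proposition \ref{prop_mod-nor}.
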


\begin{proof}
	The proof uses similar ideas as the proof of Theorem \ref{D.a-priori}.

	\vskip5pt
	\textbf{Step 1. Defining the recursion sequence $\{X_n\}_{n\in\N_0}$ and basic estimates.}

	For each $n\in\N_0$, we define
	\begin{align*}
		X_n:=Z_n+Y_n,
	\end{align*}
	where
	\begin{align*}
		Z_n:=\int_{A_{\kappa_{n}} }\left[(u-\kappa_{n})^{r(x)}+\mu(x)^{\frac{s(x)}{q(x)}}(u-\kappa_{n})^{s(x)}\right]\,\diff x
	\end{align*}
	and
	\begin{align*}
		Y_n:=\int_{\Gamma_{\kappa_{n}} }\left[(u-\kappa_{n})^{\ell(x)}+\mu(x)^{\frac{h(x)}{q(x)}}(u-\kappa_{n})^{h(x)}\right]\,\diff \sigma.
	\end{align*}
	Here $A_\kappa$ and $\{\kappa_{n}\}_{n\in\N_0}$ are given by \eqref{def.Ak} and \eqref{def.kn}, respectively, and
	\begin{align}\label{def.Gammak}
		\Gamma_\kappa:=\{x\in\Gamma\,:\, u(x)>\kappa\},\quad \kappa\in\R.
	\end{align}
	It is also clear that
	\begin{align*}
		\Gamma_{\kappa_{n+1}}\subset \Gamma_{\kappa_{n}}
		\quad\text{and}\quad
		Y_{n+1}\leq Y_n\quad \text{for all } n\in \N_0.
	\end{align*}
	Arguing as that obtained in \eqref{S-est.u} and \eqref{S-|A_{k_{n+1}}|} we have
	\begin{align} \label{N.S-est.u}
		u(x)\leq (2^{n+2}-1)(u(x)-\kappa_{n})\quad \text{for a.\,a.\,}x\in \Gamma_{\kappa_{n+1}}\text{ and for all } n\in\N_0
	\end{align}
	and
	\begin{align} \label{S-|Ga_{k_{n+1}}|}
		|\Gamma_{\kappa_{n+1}}|_\sigma \leq \left(\kappa_{*}^{-\ell^{-}}+\kappa_{*}^{-\ell^{+}}\right)2^{(n+1)\ell^+} Y_n\leq 2\left(1+\kappa_{*}^{-\ell^{+}}\right)2^{(n+1)\ell^+} Y_n
		\quad \text{for all } n\in\N_0.
	\end{align}
	Furthermore, we are going to show the following truncated energy estimate
	\begin{align}\label{N.S-grad}
		\int_{A_{\kappa_{n+1}}}\left[|\nabla u|^{p(x)}+\mu(x)|\nabla u|^{q(x)}\right]\,\diff x\leq  C_1(1+\kappa_{*}^{-\alpha_0})2^{n\beta_0}X_n\quad \text{for all } n\in\N_0,
	\end{align}
	where $\alpha_0:=\max\{r^+,\ell^+\}$ and $\beta_0:=\max\{r^+,s^+,\ell^+,h^+\}$. As before, we denote by $C_i$ ($i\in\N$) positive constants independent of $u$, $n$ and $\kappa_{*}$. In order to prove \eqref{N.S-grad}, we test \eqref{def_sol_N} with $\varphi=(u-\kappa_{n+1})_{+} \in W^{1,\mathcal{H}}(\Omega)$ in order to get
	\begin{align}\label{N.S-D.var.Eq}
		\begin{split}
			&\int_{A_{\kappa_{n+1}}}
			\mathcal{A}(x,u,\nabla u) \cdot \nabla u\,\diff x\\
			&=\int_{A_{\kappa_{n+1}} }\mathcal{B}(x,u,\nabla u)(u-\kappa_{n+1})\,\diff x
			+\int_{\Gamma_{\kappa_{n+1}}}\mathcal{C}(x,u)(u-\kappa_{n+1})\,\diff\sigma.
		\end{split}
	\end{align}
	From the previous subsection, by using the structure conditions in \ref{D1ii} and \ref{D1iii}, we have
	\begin{align*}
		&\int_{A_{\kappa_{n+1}}} \mathcal{A}(x,u,\nabla u)\cdot \nabla u\,\diff x \\
		&\geq \alpha_2\int_{A_{\kappa_{n+1}} }\left[|\nabla u|^{p(x)}+\mu(x)|\nabla u|^{q(x)}\right]\,\diff x-\alpha_3\int_{A_{\kappa_{n+1}} }\left[u^{r(x)}+\mu(x)^{\frac{s(x)}{q(x)}}u^{s(x)}+1\right]\,\diff x
	\end{align*}
	and
	\begin{align*}
		&\int_{A_{\kappa_{n+1}}} \mathcal{B}(x,u,\nabla u)(u-\kappa_{n+1})\,\diff x\\
		&\leq \frac{\alpha_2}{2}\int_{A_{\kappa_{n+1}} }\left[|\nabla u|^{p(x)}+\mu(x)|\nabla u|^{q(x)}\right]\,\diff x+C_2\int_{A_{\kappa_{n+1}} }\left[u^{r(x)}+\mu(x)^{\frac{s(x)}{q(x)}}u^{s(x)}+1\right]\,\diff x.
	\end{align*}
From the fact that $0<u-\kappa_{n+1}<u\leq u^{\ell(x)}+1$ on $\Gamma_{\kappa_{n+1}}$ and hypothesis \eqref{N1} we  get
	\begin{align*}
		\int_{\Gamma_{\kappa_{n+1}} }\mathcal{C}(x,u)(u-\kappa_{n+1})\,\diff\sigma &\leq \gamma \int_{\Gamma_{\kappa_{n+1}} }\left[u^{\ell(x)-1}+\mu(x)^{\frac{h(x)}{q(x)}}u^{h(x)-1}+1\right]u\,\diff\sigma\\
		&\leq 2\gamma \int_{\Gamma_{\kappa_{n+1}} }\left[u^{\ell(x)}+\mu(x)^{\frac{h(x)}{q(x)}}u^{h(x)}+1\right]\,\diff\sigma.
	\end{align*}
	Combining the last three estimates with \eqref{N.S-D.var.Eq} and then using \eqref{S-est.u} and \eqref{N.S-est.u}, we obtain
	\begin{align*}
		&\int_{A_{\kappa_{n+1}} }\left[|\nabla u|^{p(x)}+\mu(x)|\nabla u|^{q(x)}\right]\,\diff x\\
		&\leq C_3\int_{A_{\kappa_{n+1}} }\left[u^{r(x)}+\mu(x)^{\frac{s(x)}{q(x)}}u^{s(x)}+1\right]\,\diff x+C_4\int_{\Gamma_{\kappa_{n+1}} }\left[u^{\ell(x)}+\mu(x)^{\frac{h(x)}{q(x)}}u^{h(x)}+1\right]\,\diff\sigma\\
		&\leq C_3\int_{A_{\kappa_{n+1}}}\left(\left[(2^{n+2}-1)(u-\kappa_{n})\right]^{r(x)}+\mu(x)^{\frac{s(x)}{q(x)}}\left[(2^{n+2}-1)(u-\kappa_{n})\right]^{s(x)}\right)\,\diff x+C_3|A_{\kappa_{n+1}}|\\
		& \quad +C_4\int_{\Gamma_{\kappa_{n+1}}}\left(\left[(2^{n+2}-1)(u-\kappa_{n})\right]^{\ell(x)}+\mu(x)^{\frac{h(x)}{q(x)}}\left[(2^{n+2}-1)(u-\kappa_{n})\right]^{h(x)}\right)\,\diff \sigma+C_4|\Gamma_{\kappa_{n+1}}|_\sigma.
	\end{align*}
	This yields
	\begin{align*}
		&\int_{A_{\kappa_{n+1}} }\left[|\nabla u|^{p(x)}+\mu(x)|\nabla u|^{q(x)}\right]\,\diff x\leq C_52^{n\beta_0}X_n+C_3|A_{\kappa_{n+1}}|+C_4|\Gamma_{\kappa_{n+1}}|_\sigma.
	\end{align*}
	Combining this with \eqref{S-|A_{k_{n+1}}|} and \eqref{S-|Ga_{k_{n+1}}|} we obtain \eqref{N.S-grad}.

	\vskip5pt
	\textbf{Step 2. Estimating $X_{n+1}$ by $X_n$.}

	Let $n\in\N_0$. We are going to estimate $X_{n+1}$ by $X_n$ through estimating $Z_{n+1}$ by $Z_n$ and $Y_{n+1}$ by $X_n$. To this end, let $\{B_i\}_{i=1}^m$ be a finite open cover of $\overline{\Omega}$ as in Step 2 of the proof of Theorem \ref{D.a-priori} with the same notations. Denote by $I$ the set of all $i\in\{1,\cdots,m\}$ such that $\Gamma_i:=B_i\cap \Gamma\ne\emptyset$. We may take $R$ such that \eqref{S-D.Omega_i} and \eqref{S-D.loc.exp} hold and
	\begin{align}\label{S-N.Omega_i}
		|\partial\Omega_{i}|_\sigma<1\quad \text{for all } i\in I
	\end{align}
	and
	\begin{align}\label{S-N.loc.exp}
		p_i^+<\ell_i^-\leq\ell_i^+<(p_*)_i^-\ \ \text{and} \ \ q_i^+<h_i^-\leq h_i^+<(q_*)_i^-\quad \text{for all } i\in I.
	\end{align}
	From Step 2 of the proof of Theorem \ref{D.a-priori}, we have 	\begin{align}\label{S-N-ReIneqZ}
		Z_{n+1}\le
		C_{6}\round{\kappa_{*}^{-\mu_1}+\kappa_{*}^{-\mu_2}}b^n\round{Z_{n}^{1+\delta_1}+Z_{n}^{1+\delta_2}}\quad \text{for all } n\in\mathbb{N}_0,
	\end{align}
	where $b,\mu_1,\mu_2$ are given by \eqref{S-mu} and $\delta_1,\delta_2$ are given by \eqref{S-delta}.

	Next we estimate $Y_{n+1}$ by $X_n$. For each $i\in I$, $\hat{\alpha}>0$, and $\hat{\beta}>0$, we denote
	\begin{align*}
		H_{n,i}(\hat{\alpha},\hat{\beta}):=\int_{\partial\Omega_{i}}\left[v_n^{\hat{\alpha}}+\mu(x)^{\frac{\hat{\beta}}{q(x)}}v_n^{\hat{\beta}}\right]\,\diff \sigma,
	\end{align*}
	where $v_n:=(u-\kappa_{n+1})_+$.
	We have
	\begin{align*}
		Y_{n+1}=\int_{\Gamma}\left[v_n^{\ell(x)}+\mu(x)^{\frac{h(x)}{q(x)}}v_n^{h(x)}\right]\,\diff \sigma\leq \sum_{i\in I}\int_{\partial\Omega_{i}}\left[v_n^{\ell(x)}+\mu(x)^{\frac{h(x)}{q(x)}}v_n^{h(x)}\right]\,\diff \sigma.
	\end{align*}
	From this and \eqref{basic.ineq} we obtain
	\begin{align}\label{S-N-decompose1}
		Y_{n+1}\leq \sum_{i\in I}\big[H_{n,i}(\ell_i^-,h_i^-)+H_{n,i}(\ell_i^+,h_i^+)\big].
	\end{align}
	From \eqref{S-N.loc.exp}, we can fix $\eps$ such that
	\begin{align}\label{S-N-eps}
		0<\eps<\min_{i\in I}\ \min\l\{(p_*)_i^--\ell_i^+,(q_*)_i^--h_i^+\r\}.
	\end{align}
	Let $i\in I$ and let $\star\in\{+,-\}$. By H\"older's inequality and \eqref{S-N.Omega_i} we have
	\begin{align}\label{S-N.de-E1}
		\begin{split}
			H_{n,i}(\ell_i^\star,h_i^\star)&=\int_{\Gamma_{\kappa_{n+1}}\cap \partial\Omega_{i}}\left[v_n^{\ell_i^\star}+\mu(x)^{\frac{h_i^\star}{q(x)}}v_n^{h_i^\star}\right]\,\diff \sigma\\
			&\leq \left(\int_{\partial\Omega_{i}}v_n^{\ell_i^\star+\eps}\,\diff \sigma\right)^{\frac{\ell_i^\star}{\ell_i^\star+\eps}}|\Gamma_{\kappa_{n+1}}\cap \partial\Omega_{i}|_\sigma^{\frac{\eps}{\ell_i^\star+\eps}}\\
			&\quad +\left(\int_{\partial\Omega_{i}}\mu(x)^{\frac{h_i^\star+\eps}{q(x)}}v_n^{h_i^\star+\eps}\,\diff \sigma\right)^{\frac{h_i^\star}{h_i^\star+\eps}}|\Gamma_{\kappa_{n+1}}\cap \partial\Omega_{i}|_\sigma^{\frac{\eps}{h_i^\star+\eps}}\\
			&\leq |\Gamma_{\kappa_{n+1}}|_\sigma^{\frac{\eps}{\ell^++h^++\eps}}\left[\left(\int_{\partial\Omega_{i}}v_n^{\ell_i^\star+\eps}\,\diff \sigma\right)^{\frac{\ell_i^\star}{\ell_i^\star+\eps}}+\left(\int_{\partial\Omega_{i}}\mu(x)^{\frac{h_i^\star+\eps}{q(x)}}v_n^{h_i^\star+\eps}\,\diff \sigma\right)^{\frac{h_i^\star}{h_i^\star+\eps}}\right].
		\end{split}
	\end{align}
We denote
	\begin{align*}
		\Phi_\star(x,t):=t^{\ell_i^\star+\eps}+\mu(x)^{\frac{h_i^\star+\eps}{q(x)}}t^{h_i^\star+\eps}
		\quad\text{for }(x,t)\in \close\times[0,\infty).
	\end{align*}
	Since $\ell_i^\star+\eps<(p_*)_i^-$ and $h_i^\star+\eps<(q_*)_i^-$ (see \eqref{S-N-eps}), we have
	\begin{align}\label{S-N.loc-emb'}
		W^{1,p(\cdot)}(\Omega_i)\hookrightarrow	W^{1,p_i^-}(\Omega_i)\hookrightarrow L^{	\ell_i^\star+\eps}(\partial\Omega_i)
	\end{align}
and
	\begin{align}\label{S-N.loc-emb}
		W^{1,\mathcal{H}}(\Omega_i)\hookrightarrow L^{\Phi_\star}(\partial\Omega_i)
	\end{align}
in view of Proposition~\ref{proposition_embeddings} and Remark~\ref{Rmk} (for the case $\Omega=\Omega_i$), and Proposition \ref{prop_S-C-T-E} (for the case $\Omega=\Omega_i$). Applying Proposition \ref{prop_mod-nor2} (for the case $\mu\equiv 0$ and $\Omega=\Omega_i$) and the embedding \eqref{S-N.loc-emb'}, we have
	\begin{align}\label{S-N.de-E1'}
		\begin{split}
			\left(\int_{\partial\Omega_{i}}v_n^{\ell_i^\star+\eps}\,\diff \sigma\right)^{\frac{\ell_i^\star}{\ell_i^\star+\eps}}\leq C_7 \|v_n\|_{1,p(\cdot),\Omega_i}^{\ell_i^\star}\leq C_8\left(R_{n}^{\frac{\ell_i^\star}{p_i^-}}+R_{n}^{\frac{\ell_i^\star}{p_i^+}}\right),
		\end{split}
	\end{align}
where
	\begin{align*}
	R_{n}:=\int_{\Omega}\left[|\nabla v_n|^{p(x)}+\mu(x)|\nabla v_n|^{q(x)}\right]\,\diff x	+\int_{\Omega}\left[ v_n^{p(x)}+\mu(x)v_n^{q(x)}\right]\,\diff x.
\end{align*}
On the other hand, by invoking the embedding \eqref{S-N.loc-emb} and Proposition \ref{prop_mod-nor2} we have
\begin{align}\label{S-N.de-E2}
	\left(\int_{\partial\Omega_{i}}\mu(x)^{\frac{h_i^\star+\eps}{q(x)}}v_n^{h_i^\star+\eps}\,\diff \sigma\right)^{\frac{h_i^\star}{h_i^\star+\eps}}
	\leq \|v_n\|_{\Phi_\star,\partial\Omega_i}^{h_i^\star}\leq C_9\|v_n\|_{1,\mathcal{H},\Omega_i}^{h_i^\star}\leq C_9\left(R_{n}^{\frac{h_i^\star}{p_i^-}}+R_{n}^{\frac{h_i^\star}{q_i^+}}\right).
\end{align}
From \eqref{S-N.de-E1}, \eqref{S-N.de-E1'} and \eqref{S-N.de-E2}, we obtain
\begin{align}\label{S-N.de-E3}
	H_{n,i}(\ell_i^\star,h_i^\star)
	\leq C_{10}|\Gamma_{\kappa_{n+1}}|_\sigma^{\frac{\eps}{\ell^++h^++\eps}}\left(R_{n}^{\frac{\ell_i^\star}{p_i^-}}+R_{n}^{\frac{\ell_i^\star}{p_i^+}}+R_{n}^{\frac{h_i^\star}{p_i^-}}+R_{n}^{\frac{h_i^\star}{q_i^+}}\right).
\end{align}
	From \eqref{S-N.de-E3} and \eqref{basic.ineq} we infer
	\begin{align}\label{S-N.de-E5}
		H_{n,i}(\ell_i^\star,h_i^\star)\leq C_{11}|\Gamma_{\kappa_{n+1}}|_\sigma^{\frac{\eps}{\ell^++h^++\eps}}\left(R_{n}^{1+\bar{\gamma}_1}+R_{n}^{1+\bar{\gamma}_2}\right),
	\end{align}
	where
	\begin{align*}
		0<\bar{\gamma}_1:=\min_{1\leq i\leq m} \min\left\{\frac{\ell_i^-}{p_i^+},\frac{h_i^-}{q_i^+}\right\}-1\leq\bar{\gamma}_2:=\max_{1\leq i\leq m} \max\left\{\frac{\ell_i^+}{p_i^-},\frac{h_i^+}{p_i^-}\right\}-1.
	\end{align*}
	Utilizing the estimate \eqref{S-N.de-E5}, we deduce from \eqref{S-N-decompose1} that
	\begin{align}\label{S-N.de-E6}
		Y_{n+1}\leq C_{11}|\Gamma_{\kappa_{n+1}}|_\sigma^{\frac{\eps}{\ell^++h^++\eps}}\left(R_{n}^{1+\bar{\gamma}_1}+R_{n}^{1+\bar{\gamma}_2}\right)
		\quad\text{for all } n\in\N_0.
	\end{align}
Note that by \eqref{S-D.u} and \eqref{N.S-grad} we have
	\begin{align*}
	R_n\leq  C_{12}(1+\kappa_{*}^{-\alpha_0})2^{n\beta_0}X_n\quad \text{for all } n\in\N_0,
\end{align*}
where $\alpha_0$ and $\beta_0$ are given in \eqref{N.S-grad}. It follows that
	\begin{align}\label{S-N.de-E7}
		R_{n}^{1+\bar{\gamma}_1}+R_{n}^{1+\bar{\gamma}_2}\leq C_{13}\left(1+\kappa_{*}^{-\alpha_0(1+\bar{\gamma}_2)}\right)2^{n\beta_0(1+\bar{\gamma}_2)}\left(X_{n}^{1+\bar{\gamma}_1}+X_{n}^{1+\bar{\gamma}_2}\right).
	\end{align}
	On the other hand, we deduce from \eqref{S-|Ga_{k_{n+1}}|} that
	\begin{align}\label{S-N.de-E8}
		|\Gamma_{\kappa_{n+1}}|_\sigma^{\frac{\eps}{\ell^++h^++\eps}}\leq C_{14}\left(\kappa_{*}^{-\frac{\eps \ell^{-}}{\ell^++h^++\eps}}+\kappa_{*}^{-\frac{\eps \ell^{+}}{\ell^++h^++\eps}}\right)2^{\frac{\eps \ell^+}{\ell^++h^++\eps}n} Y_n^{\frac{\eps}{\ell^++h^++\eps}}.
	\end{align}
	From \eqref{S-N.de-E6}, \eqref{S-N.de-E7} and \eqref{S-N.de-E8} we arrive at
	\begin{align}\label{S-N-ReIneqY}
		Y_{n+1}\le
		C_{15}\round{\kappa_{*}^{-\bar{\mu}_1}+\kappa_{*}^{-\bar{\mu}_2}}\bar{b}^n\round{X_{n}^{1+\bar{\delta}_1}+X_{n}^{1+\bar{\delta}_2}}\quad \text{for all } n\in\mathbb{N}_0,
	\end{align}
	where
	\begin{align*}
		0<\bar{\mu}_1:=\frac{\eps \ell^{-}}{\ell^++h^++\eps}&<\bar{\mu}_2:=\alpha_0(1+\bar{\gamma}_2)+\frac{\eps \ell^{+}}{\ell^++h^++\eps},\\
		1&<\bar{b}:=2^{\beta_0(1+\bar{\gamma}_2)+\frac{\eps \ell^+}{\ell^++h^++\eps}},
	\end{align*}
	and
	\begin{align*}
		0<\bar{\delta}_1:=\bar{\gamma}_1+\frac{\eps}{\ell^++h^++\eps}\leq \bar{\delta}_2:=\bar{\gamma}_2+\frac{\eps}{\ell^++h^++\eps}.
	\end{align*}
	Finally, combining \eqref{S-N-ReIneqZ} with \eqref{S-N-ReIneqY} gives
	\begin{align}\label{S-N-ReIneqX}
		X_{n+1}\le
		C_{16}\round{\kappa_{*}^{-\eta_1}+\kappa_{*}^{-\eta_2}}b_0^n\round{X_{n}^{1+\lambda_1}+X_{n}^{1+\lambda_2}}
		\quad \text{for all } n\in\mathbb{N}_0,
	\end{align}
	where
	\begin{align*}
		0<\eta_1:=\min\{\mu_1,\bar{\mu}_1\}\leq \eta_2:=\max\{\mu_2,\bar{\mu}_2\},1<b_0:=\max\{b,\bar{b}\},
	\end{align*}
	and
	\begin{align*}
		0<\lambda_1:=\min\{\delta_1,\bar{\delta}_1\}\leq \lambda_2:=\max\{\delta_2,\bar{\delta}_2\}.
	\end{align*}

	\vskip5pt
	\textbf{Step 3. A priori bounds.}

	We will also apply Lemma \ref{leRecur} to the sequence $\{X_n\}_{n\in\N_0}$ with the recursion inequality \eqref{S-N-ReIneqX}. Note that
	\begin{align*}
		\begin{split}
			X_0
			&=\int_{\Omega}\left[(u-\kappa_{*})_+^{p(x)}+\mu(x)(u-\kappa_{*})_+^{q(x)}\right]\,\diff x\\
			&\quad +\int_{\Gamma}\left[(u-\kappa_{*})_+^{\ell(x)}+\mu(x)^{\frac{h(x)}{q(x)}}(u-\kappa_{*})_+^{h(x)}\right]\,\diff \sigma\\
			&\le
			\int_{\Omega}\Psi(x,|u|)\,\diff x+\int_{\Gamma}\Upsilon(x,|u|)\,\diff \sigma
		\end{split}
	\end{align*}
	and
	\begin{align*}
		X_n
		&=\int_{\Omega}\left[(u-\kappa_{n})_+^{r(x)}+\mu(x)^{\frac{s(x)}{q(x)}}(u-\kappa_{n})_+^{s(x)}\right]\,\diff x\\
		&\quad +\int_{\Gamma}\left[(u-\kappa_{n})_+^{\ell(x)}+\mu(x)^{\frac{h(x)}{q(x)}}(u-\kappa_{n})_+^{h(x)}\right]\,\diff \sigma\\
		&\to \int_{\Omega}\left[(u-2\kappa_{*})_+^{r(x)}+\mu(x)^{\frac{s(x)}{q(x)}}(u-2\kappa_{*})^{s(x)}\right]\,\diff x\\ &\quad +\int_{\Gamma}\left[(u-2\kappa_{*})_+^{\ell(x)}+\mu(x)^{\frac{h(x)}{q(x)}}(u-2\kappa_{*})_+^{h(x)}\right]\,\diff \sigma\quad\text{as }n\to\infty.
	\end{align*}
	By repeating the arguments in Step 3 of the proof of Theorem \ref{D.a-priori} we get that
	\begin{align}\label{N.applying.le.5}
		\|u\|_{\infty,\Omega}+\|u\|_{\infty,\Gamma}\le
		C\max\left\{G(u)^{\bar{\tau}_1},G(u)^{\bar{\tau}_2}\right\},
	\end{align}
	where $C$, $\bar{\tau}_1$ and $\bar{\tau}_2$ are positive constants independent of $u$ and
	$$G(u):=\int_{\Omega}\Psi(x,|u|)\,\diff x+\int_{\Gamma}\Upsilon(x,|u|)\,\diff \sigma.$$
	In view of Proposition~\ref{prop_mod-nor} we easily derive \eqref{N.L^infity} from \eqref{N.applying.le.5} and this completes the proof.
\end{proof}

\section{The boundedness for generalized double phase problems with critical growth}\label{Section-5}

The aim of this section is to discuss the boundedness of weak solutions to the problems \eqref{D} and \eqref{N} when we suppose a critical growth based on the Propositions \ref{prop_C-do-E} and \ref{prop_C-T-E} via the idea in Ho-Kim-Winkert-Zhang \cite{Ho-Kim-Winkert-Zhang-2022} using the De Giorgi iteration together with a localization method. Let hypotheses \eqref{H3} be satisfied and we state our hypotheses on the data.

\begin{enumerate}[label=\textnormal{(D$_2$):},ref=\textnormal{D$_2$}]
	\item\label{D2}
	The functions $\mathcal{A}\colon\Omega\times\R\times\R^N\to \R^N$ and $\mathcal{B}\colon\Omega \times \R\times \R^N\to \R$ are Carath\'eodory functions such that
	\begin{enumerate}[label=\textnormal{(\roman*)},ref=\textnormal{(D$_2$)(\roman*)}]
		\item
			$ |\mathcal{A}(x,t,\xi)|
			\leq \alpha_1 \left[1+|t|^{\frac{p^*(x)}{p'(x)}}+\mu(x)^{\frac{N-1}{N-q(x)}}|t|^{\frac{q^*(x)}{q'(x)}}+|\xi|^{p(x)-1} +\mu(x)|\xi|^{q(x)-1}\right],$
		\item\label{D2ii}
			$ \mathcal{A}(x,t,\xi)\cdot \xi \geq \alpha_2 \left[|\xi|^{p(x)} +\mu(x)|\xi|^{q(x)}\right]-\alpha_3\left[|t|^{p^*(x)}+\mu(x)^{\frac{q^*(x)}{q(x)}}|t|^{q^*(x)}+1\right],$
		\item\label{D2iii}
			$ |\mathcal{B}(x,t,\xi)|
			\leq \beta \left[ |t|^{p^*(x)-1}+\mu(x)^{\frac{q^*(x)}{q(x)}}|t|^{q^*(x)-1}+|\xi|^{\frac{p(x)}{(p^*)'(x)}} +\mu(x)^{\frac{N+1}{N}}|\xi|^{\frac{q(x)}{(q^*)'(x)}}+1\right],$
	\end{enumerate}
	for a.\,a.\,$x\in\Omega$ and for all $(t,\xi) \in \R\times\R^N$, where $\alpha_1,\alpha_2,\alpha_3$ and $\beta$ are positive constants.
\end{enumerate}

For problem \eqref{N} we need an additional assumption for the boundary term.
\begin{enumerate}[label=\textnormal{(N$_2$):},ref=\textnormal{N$_2$}]
	\item\label{N2}
	The function $\mathcal{C}\colon\Gamma \times \R\to \R$ is a Carath\'eodory function such that
	\begin{enumerate}[label=\textnormal{(\roman*)},ref=\textnormal{(N$_2$)(\roman*)}]
		\item[]
		$ |\mathcal{C}(x,t)|\leq \gamma \left[|t|^{p_*(x)-1}+\mu(x)^{\frac{q_*(x)}{q(x)}}|t|^{q_*(x)-1}+1\right]$
	\end{enumerate}
	for a.\,a.\,$x\in\Gamma$ and for all $t\in \mathbb{R}$, where $\gamma$ is a positive constant.
\end{enumerate}

The definitions of weak solutions to problems \eqref{D} and \eqref{N} are the same as that given in Definition~\ref{Def.Sol}. In view of Propositions \ref{prop_C-do-E} and \ref{prop_C-T-E}, these definitions make sense under the above conditions \eqref{D2} and \eqref{N2}.

We start with the Dirichlet problem \eqref{D} and have the following result.

\begin{theorem} \label{Theo.D}
	Let hypotheses \eqref{H3} and \eqref{D2} be satisfied. Then, any weak solution $u\in \Wpzero{\mathcal{H}}$ of problem \eqref{D} is of class $L^\infty (\Omega)$.
\end{theorem}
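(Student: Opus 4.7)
The plan is to adapt the De Giorgi iteration carried out in the proof of Theorem~\ref{D.a-priori} to the critical setting, replacing the Hölder gain $|A_{\kappa_{n+1}}|^{\varepsilon}$ (which came from strict subcriticality $r<p^{*}$) by two new ingredients: the absolute continuity of the $L^{\mathcal{G}^*}$-integral, which makes the initial term $Z_{0}$ as small as we wish, and the pointwise strict inequality $q(x)<p^{*}(x)\leq q^{*}(x)$ guaranteed by \eqref{H3} (the condition $(q/p)^{+}<1+1/N$), which after localisation provides a genuine exponent gap that generates the recursion power $1+\delta$.

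First I would fix $\kappa_{*}\geq 1$, define $\kappa_{n}=\kappa_{*}(2-2^{-n})$, $A_{\kappa}=\{u>\kappa\}$ and the critical truncated energy
\begin{equation*}
Z_{n}:=\int_{A_{\kappa_{n}}}\mathcal{G}^{*}\bigl(x,u-\kappa_{n}\bigr)\diff x
=\int_{A_{\kappa_{n}}}\!\Bigl[(u-\kappa_{n})^{p^{*}(x)}+\mu(x)^{\frac{q^{*}(x)}{q(x)}}(u-\kappa_{n})^{q^{*}(x)}\Bigr]\diff x.
\end{equation*}
Testing the weak formulation \eqref{def_sol_D} with $\varphi=(u-\kappa_{n+1})_{+}\in W^{1,\mathcal{H}}_{0}(\Omega)$ and using the structure conditions \ref{D2ii}, \ref{D2iii} together with Young's inequality, I would absorb the gradient contribution coming from $\mathcal{B}$ into the left-hand side. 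Exploiting $u\leq(2^{n+2}-1)(u-\kappa_{n})$ on $A_{\kappa_{n+1}}$ and the Chebyshev bound $|A_{\kappa_{n+1}}|\le C\kappa_{*}^{-(p^{*})^{-}}2^{n(p^{*})^{+}}Z_{n}$, this yields a truncated energy estimate of the form
\begin{equation*}
R_{n}:=\int_{A_{\kappa_{n+1}}}\!\Bigl[|\nabla u|^{p(x)}+\mu(x)|\nabla u|^{q(x)}+v_{n}^{p(x)}+\mu(x)v_{n}^{q(x)}\Bigr]\diff x\le C\,2^{n\beta_{0}}Z_{n},
\end{equation*}
with $v_{n}:=(u-\kappa_{n+1})_{+}$ and constants $C,\beta_{0}>0$ independent of $n$ and $\kappa_{*}$.

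Next I would localise. Cover $\overline{\Omega}$ by finitely many balls $\{B_{i}\}_{i=1}^{m}$ of radius $R$ so small that $\Omega_{i}:=B_{i}\cap\Omega$ is Lipschitz, $|\Omega_{i}|<1$, and the oscillations of $p,q,p^{*},q^{*}$ on $\overline{\Omega}_{i}$ are so small that
\begin{equation*}
q_{i}^{+}<(p^{*})_{i}^{-}\quad\text{and}\quad q_{i}^{+}<(q^{*})_{i}^{-}\qquad(i=1,\dots,m);
\end{equation*}
the first inequality is possible because \eqref{H3} forces $q(x)<p^{*}(x)$ strictly on $\overline{\Omega}$. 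On each $\Omega_{i}$ I apply Proposition~\ref{prop_C-do-E} to obtain $W^{1,\mathcal{H}}(\Omega_{i})\hookrightarrow L^{\mathcal{G}^{*}}(\Omega_{i})$, and combine it with Proposition~\ref{prop_mod-nor2} to pass from the modular $\int_{\Omega_{i}}\mathcal{G}^{*}(x,v_{n})\diff x$ to a power of $\|v_{n}\|_{1,\mathcal{H},\Omega_{i}}$ and then to a power of $R_{n}$. The decisive point is that the exponent-gap inequalities $(p^{*})^{-}_{i}/q^{+}_{i}>1$ and $(q^{*})^{-}_{i}/q^{+}_{i}>1$ translate the estimate into
\begin{equation*}
\int_{\Omega_{i}}\mathcal{G}^{*}(x,v_{n})\diff x\le C\bigl(R_{n}^{1+\gamma_{1,i}}+R_{n}^{1+\gamma_{2,i}}\bigr),\qquad 0<\gamma_{1,i}\le \gamma_{2,i},
\end{equation*}
which after summation and insertion of the energy bound produces the recursion
\begin{equation*}
Z_{n+1}\le K\,b^{n}\bigl(Z_{n}^{1+\delta_{1}}+Z_{n}^{1+\delta_{2}}\bigr),\qquad n\in\mathbb{N}_{0},
\end{equation*}
for constants $K,b,\delta_{1},\delta_{2}>0$ depending on $u$ but not on $\kappa_{*}$.

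The final step is to apply Lemma~\ref{leRecur}. Unlike in the subcritical proof, the prefactor $K$ carries no helpful negative power of $\kappa_{*}$, so the smallness required by the lemma must come from $Z_{0}$ itself. Because $u\in L^{\mathcal{G}^{*}}(\Omega)$ by Proposition~\ref{prop_C-do-E}, the function $\mathcal{G}^{*}(\cdot,u)$ is integrable and by absolute continuity $\int_{A_{\kappa_{*}}}\mathcal{G}^{*}(x,u)\diff x\to 0$ as $\kappa_{*}\to\infty$; since $Z_{0}\le \int_{A_{\kappa_{*}}}\mathcal{G}^{*}(x,u)\diff x$, we may fix $\kappa_{*}$ so large that the smallness hypothesis of Lemma~\ref{leRecur} is satisfied. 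That lemma then gives $Z_{n}\to 0$, and Lebesgue's dominated convergence theorem identifies the limit as $\int_{\Omega}\mathcal{G}^{*}(x,(u-2\kappa_{*})_{+})\diff x$, forcing $u\le 2\kappa_{*}$ a.e.~in $\Omega$. Running the same argument for $-u$ yields $\|u\|_{\infty,\Omega}\le 2\kappa_{*}$.

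The main obstacle will be the third step, namely verifying that the exponent-gap on small patches can be turned into a quantitative recursion exponent $1+\delta_{j}>1$ that is uniform in $n$ while keeping the constants $K,b$ independent of $\kappa_{*}$; this is what allows the smallness of $Z_{0}$ (and not of $K$) to close the De Giorgi iteration in the critical regime.
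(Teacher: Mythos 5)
Your proposal is correct and follows the same overall strategy as the paper's proof: De Giorgi iteration on the level sets $A_{\kappa_n}$, a finite covering of $\close$ by small patches $\Omega_i$ so that $q_i^+<(p^*)_i^-$ (available precisely because \eqref{H3} forces $q(x)<p^*(x)$), the local critical embedding of Proposition \ref{prop_C-do-E} on each $\Omega_i$ combined with the modular--norm relations (Propositions \ref{prop_mod-nor} and \ref{prop_mod-nor2}), and smallness of the starting term obtained from absolute continuity of the integrals by taking $\kappa_*$ large, since in the critical regime the prefactor carries no negative power of $\kappa_*$. The one genuine structural difference is in the choice of the iterated quantity: you take $Z_n=\int_{A_{\kappa_n}}\mathcal{G}^*(x,u-\kappa_n)\,\diff x$ only, and control the gradient energy $\int_{A_{\kappa_{n+1}}}\mathcal{H}(x,|\nabla u|)\,\diff x$ at every step through the testing estimate, which, since that estimate is linear in $Z_n$, still closes because the gradient term only enters \emph{inside} the localized embedding step (where it is raised to the power $(p^*)_i^-/q_i^+>1$); this yields a clean one-step recursion $Z_{n+1}\le Kb^n(Z_n^{1+\delta_1}+Z_n^{1+\delta_2})$ to which Lemma \ref{leRecur} applies directly. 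The paper instead includes $\int_{A_{\kappa_n}}\mathcal{H}(x,|\nabla u|)\,\diff x$ in $Z_n$, which makes Claim 1 self-contained but forces the gradient part of $Z_{n+1}$ to be estimated superlinearly only in terms of $Z_{n-1}$, producing a two-step recursion that must be split into even and odd subsequences before Lemma \ref{leRecur} can be used; your bookkeeping avoids that splitting. One small point you should make explicit: in order to invoke the modular--norm inequalities with the favourable exponents (modular $\le$ norm to the power $(p^*)_i^-$, norm $\le$ modular to the power $1/q_i^+$), choose $\kappa_*$ so that not only $\int_{A_{\kappa_*}}\mathcal{G}^*(x,|u|)\,\diff x$ but also $\int_{A_{\kappa_*}}\mathcal{H}(x,|\nabla u|)\,\diff x$ and $\int_{A_{\kappa_*}}\mathcal{H}(x,|u|)\,\diff x$ are below $1$, exactly as in \eqref{k*}; this costs nothing (all three integrands are in $L^1(\Omega)$), and even without it the argument survives with an additional admissible power, so it is a detail rather than a gap.
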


\begin{proof}
	As before, we can cover $\overline{\Omega}$ by balls $\{B_i\}_{i=1}^m$ with radius $R$ such that each $\Omega_i:=B_i\cap\Omega$ ($i=1,\cdots,m$) is a Lipschitz domain. Note that by \eqref{H3}, it holds $q(x)<p^*(x)$ for all $x\in\overline{\Omega}$. Thus, we may take the radius $R$ sufficiently small such that
	\begin{align}\label{D.loc.exp}
		q_i^+:=\max_{x\in \overline{\Omega}_i} ~q(x)
		<{(p^*)}^{-}_{i}:=\min_{x\in \overline{\Omega}_i} p^*(x)\quad \text{for all } i\in\{1,\cdots,m\}.
	\end{align}
	Let $u$ be a weak solution to problem \eqref{D} and  let $\kappa_{*}\geq 1$ be sufficiently large such that
	\begin{align}\label{k*}
		\int_{A_{\kappa_{*}}}\mathcal{H}(x,|\nabla u|)\,\diff x+\int_{A_{\kappa_{*}}}\mathcal{H}(x,|u|)\,\diff x+\int_{A_{\kappa_{*}}}\mathcal{G}^*(x,|u|)\,\diff x<1,
	\end{align}
	where $A_\kappa$ for  $\kappa\in\R$ is defined in \eqref{def.Ak} and recall that, for all $(x,t)\in \overline{\Omega}\times [0,\infty)$,
	\begin{align*}
		\mathcal{H}(x,t):=t^{p(x)}+\mu(x)t^{q(x)}
		\quad\text{and}\quad \mathcal{G}^*(x,t):=t^{p^*(x)}+\mu(x)^{\frac{q^*(x)}{q(x)}}t^{q^*(x)},
	\end{align*}
	see \eqref{def-H} and \eqref{G^*}. Then, let $\{\kappa_{n}\}_{n\in\N_0}$ be as in \eqref{def.kn} and define $v_n:=(u-\kappa_{n+1})_+$ for each $n\in\N_0$. Moreover, we define
	\begin{align}
	\label{Zn.def}
		Z_n:=\int_{A_{\kappa_{n}}}\mathcal{H}(x,|\nabla u|)\,\diff x+\int_{A_{\kappa_{n}} }\mathcal{G}^*(x,u-\kappa_{n})\,\diff x.
	\end{align}
	Similarly to the previous Section \ref{Section-4}, we easily obtain
	\begin{align}\label{C.Zn.dec}
		Z_{n+1}\leq Z_n\quad\text{for all } n\in \mathbb{N}_0
	\end{align}
	and
	\begin{align} \label{C.|A_{k_{n+1}}|}
		|A_{\kappa_{n+1}}| \leq \frac{2^{(n+1)(p^*)^+}}{\kappa_{*}^{(p^*)^{-}}} Z_n\leq 2^{(n+1)(p^*)^+} Z_n\quad \text{for all } n\in\N_0.
	\end{align}
	In order to apply Lemma \ref{leRecur}, in the following we will establish recursion inequalities for $\{Z_n\}_{n\in\N_0}$. In the rest of the proof, as before, $C_i$ ($i\in\N$) stand for positive constants independent of $n$ and $\kappa_{*}$.

	\vskip5pt
	\textbf{Claim 1:} There exist constants $\mu_1,\mu_2>0$ such that
	\begin{align*}
		\int_{A_{\kappa_{n+1}} }\mathcal{G}^*(x,v_n)\,\diff x\leq  C_1 2^{\frac{n\left((p^*)^+\right)^2}{p^-}}\left(Z_{n}^{1+\mu_1}+Z_{n}^{1+\mu_2}\right)\quad  \text{for all } n\in\N_0.
	\end{align*}
	Indeed, we have
	\begin{align}\label{decompose1}
		\int_{A_{\kappa_{n+1}} }\mathcal{G}^*(x,v_n)\,\diff x=\int_{\Omega}\mathcal{G}^*(x,v_n)\,\diff x\leq \sum_{i=1}^m\int_{\Omega_{i}}\mathcal{G}^*(x,v_n)\,\diff x.
	\end{align}
	Let $i\in\{1,\cdots,m\}$. From \eqref{k*} and the relation between the norm and the modular (see Proposition \ref{prop_mod-nor}) we get
	\begin{align*}
		\int_{\Omega_{i}}\mathcal{G}^*(x,v_n)\,\diff x\leq \|v_n\|_{\mathcal{G}^*,\Omega_{i}}^{(p^*)_i^-}.
	\end{align*}
	Then, applying Proposition \ref{prop_C-do-E} for $\Omega=\Omega_{i}$ we obtain
	\begin{align*}
		\int_{\Omega_{i}}\mathcal{G}^*(x,v_n)\,\diff x\leq C_2\big[\|\nabla v_n\|_{\mathcal{H},\Omega_{i}}+\|v_n\|_{\mathcal{H},\Omega_{i}}\big]^{(p^*)_i^-}.
	\end{align*}
	From the equivalent norm in \eqref{equivalent-norm-W1H} and Proposition \ref{prop_mod-nor2}, noticing \eqref{k*} again, we then have
	\begin{align*}
		\int_{\Omega_{i}}\mathcal{G}^*(x,v_n)\,\diff x&\leq C_3\bigg(\int_{\Omega_{i}}\mathcal{H}(x,|\nabla v_n|)\,\diff x+\int_{\Omega_{i}}\mathcal{H}(x,v_n)\,\diff x\bigg)^{\frac{(p^*)_i^-}{q_i^+}}\\
		&\leq C_4\left(\int_{A_{\kappa_{n+1}}}\mathcal{H}(x,|\nabla v_n|)\,\diff x+\int_{A_{\kappa_{n+1}}}\mathcal{G}^*(x,v_n)\,\diff x+|A_{\kappa_{n+1}}|\right)^{\frac{(p^*)_i^-}{q_i^+}}.
	\end{align*}
	Using this along with \eqref{Zn.def}, \eqref{C.Zn.dec} and \eqref{C.|A_{k_{n+1}}|} we infer that
	\begin{align*}
		\int_{\Omega_{i}}\mathcal{G}^*(x,v_n)\,\diff x
		&\leq C_52^{\frac{n(p^*)^+(p^*)_i^-}{q_i^+}}Z_{n}^{\frac{(p^*)_i^-}{q_i^+}}.
	\end{align*}
	Now, if we combine this with \eqref{decompose1} and \eqref{basic.ineq} we arrive at
	\begin{align*}
		\int_{A_{\kappa_{n+1}}}\mathcal{G}^*(x,v_n)\,\diff x\leq C_62^{\frac{n\left((p^*)^+\right)^2}{q^-}}\left(Z_{n}^{1+\mu_1}+Z_{n}^{1+\mu_2}\right),
	\end{align*}
	where
	\begin{align*}
		0<\mu_1:=\min_{1\leq i\leq m} \frac{(p^*)_i^-}{q_i^+}-1\leq\mu_2:=\max_{1\leq i\leq m} \frac{(p^*)_i^-}{q_i^+}-1
	\end{align*}
	due to \eqref{D.loc.exp}. This shows Claim 1.

	\vskip5pt
	{\bf Claim 2:}
	It holds that
	\begin{align*}
		\int_{A_{\kappa_{n+1}}}\mathcal{H}(x,|\nabla u|)\,\diff x\leq  C_7 2^{n\l[\frac{\left((p^*)^+\right)^2}{q^-}+(q^*)^+\r]}\left(Z_{n-1}^{1+\mu_1}+Z_{n-1}^{1+\mu_2}\right)\quad \text{for all } n\in\N.
	\end{align*}

	We test  \eqref{def_sol_D} with $\varphi =v_n \in W_0^{1,\mathcal{H}}(\Omega)$ to get
	\begin{align}\label{D.var.Eq}
		\int_{A_{\kappa_{n+1}}}
		\mathcal{A}(x,u,\nabla u) \cdot \nabla u\,\diff x =\int_{A_{\kappa_{n+1}} }\mathcal{B}(x,u,\nabla u)(u-\kappa_{n+1})\,\diff x.
	\end{align}
	Note that $u\geq u-\kappa_{n+1} >0$ and $u> \kappa_{n+1} \geq 1$ on $A_{\kappa_{n+1}}$. Applying this along with the structure conditions in \ref{D2ii}, \ref{D2iii} along with Young's inequality, we reach the following estimates
	\begin{align*}
		\int_{A_{\kappa_{n+1}}} \mathcal{A}(x,u,\nabla u)\cdot \nabla u\,\diff x &\geq \alpha_2\int_{A_{\kappa_{n+1}} }\mathcal{H}(x,|\nabla u|)\,\diff x-\alpha_3\int_{A_{\kappa_{n+1}} }\left[\mathcal{G}^*(x,u)+1\right]\,\diff x\\
		&\geq \alpha_2\int_{A_{\kappa_{n+1}} }\mathcal{H}(x,|\nabla u|)\,\diff x-2\alpha_3\int_{A_{\kappa_{n+1}} }\mathcal{G}^*(x,u)\,\diff x
	\end{align*}
	and
	\begin{align*}
		&\int_{A_{\kappa_{n+1}}} \mathcal{B}(x,u,\nabla u)(u-\kappa_{n+1})\,\diff x\\
		&\leq \beta\int_{A_{\kappa_{n+1}} }\left(1+u^{p^*(x)-1}+\mu(x)^{\frac{q^*(x)}{q(x)}}u^{q^*(x)-1}+|\nabla u|^{\frac{p(x)}{(p^*)'(x)}} +\mu(x)^{\frac{N+1}{N}}|\nabla u|^{\frac{q(x)}{(q^*)'(x)}}\right)u\,\diff x\\
		&\leq \frac{\alpha_2}{2}\int_{A_{\kappa_{n+1}} }\mathcal{H}(x,|\nabla u|)\,\diff x+C_8\int_{A_{\kappa_{n+1}} }\mathcal{G}^*(x,u)\,\diff x.
	\end{align*}

	Combining the last two estimates with \eqref{D.var.Eq} and then using \eqref{S-est.u}, we obtain
	\begin{align*}
		&\int_{A_{\kappa_{n+1}} }\mathcal{H}(x,|\nabla u|)\,\diff x\\
		&\leq C_9\int_{A_{\kappa_{n+1}} }\left[u^{p^*(x)}+\mu(x)^{\frac{q^*(x)}{q(x)}}u^{q^*(x)}\right]\,\diff x\\
		&\leq C_9\int_{A_{\kappa_{n+1}}}\left(\left[(2^{n+2}-1)(u-\kappa_{n})\right]^{p^*(x)}+\mu(x)^{\frac{q^*(x)}{q(x)}}\left[(2^{n+2}-1)(u-\kappa_{n})\right]^{q^*(x)}\right)\,\diff x.
	\end{align*}
	This yields
	\begin{align*}
		\int_{A_{\kappa_{n+1}} }\mathcal{H}(x,|\nabla u|)\,\diff x
		\leq C_{10}2^{n(q^*)^+}\int_{A_{\kappa_{n}}}\mathcal{G}^*(x,v_{n-1})\,\diff x.
	\end{align*}
	Thus, from Claim 1 and the last inequality we obtain the assertion in Claim 2.

	Using the Claims 1 and 2 along with  \eqref{C.Zn.dec} gives us
	\begin{align}\label{Recur}
		Z_{n+1}\leq C_{11} b^n\left(Z_{n-1}^{1+\mu_1}+Z_{n-1}^{1+\mu_2}\right)\quad \text{for all } n\in\N,
	\end{align}
	where $b:=2^{\big[\frac{\left((p^*)^+\right)^2}{q^-}+(q^*)^+\big]}>1$. This implies
	\begin{align*}
		Z_{2(n+1)}\leq C_{11} b^{2n+1}\left(Z_{2n}^{1+\mu_1}+Z_{2n}^{1+\mu_2}\right)\quad \text{for all } n\in\N_0.
	\end{align*}
	So, writing $\widetilde{Z}_n:=Z_{2n}$ and $\widetilde{b}:=b^2$, we have
	\begin{align}\label{Recur1}
		\widetilde{Z}_{n+1}\leq bC_{11} \widetilde{b}^n\left(\widetilde{Z}_n^{1+\mu_1}+\widetilde{Z}_n^{1+\mu_2}\right) \quad \text{for all } n\in\N_0.
	\end{align}
	Now we can apply Lemma \ref{leRecur} to \eqref{Recur1}. This yields
	\begin{align}\label{Recur+1}
		Z_{2n}=\widetilde{Z}_n \to 0 \quad \text {as } n\to \infty
	\end{align}
	provided that
	\begin{align}\label{Z_0}
		\widetilde{Z}_{0}\leq \min\left\{(2bC_{11})^{-\frac{1}{\mu_1}}\ \widetilde{b}^{-\frac{1}{\mu_1^{2}}},\left(2bC_{11}\right)^{-\frac{1}{\mu_2}}\ \widetilde{b}^{-\frac{1}{\mu_1\mu_2}-\frac{\mu_2-\mu_1}{\mu_2^{2}}}\right\}.
	\end{align}
	Using again \eqref{Recur} we also obtain
	\begin{align*}
		Z_{2(n+1)+1}\leq C_{11} b^{2(n+1)}\left(Z_{2n+1}^{1+\mu_1}+Z_{2n+1}^{1+\mu_2}\right)\quad \text{for all } n\in\N_0,
	\end{align*}
	which for $\bar{Z}_n:=Z_{2n+1}$ and again $\widetilde{b}:=b^2$, reads as
	\begin{align}\label{Recur2}
		\bar{Z}_{n+1}\leq \widetilde{b}C_{11} \widetilde{b}^n\left(\bar{Z}_n^{1+\mu_1}+\widetilde{Z}_n^{1+\mu_2}\right) \quad \text{for all } n\in\N_0.
	\end{align}
	Lemma \ref{leRecur} applied to \eqref{Recur2} now leads to
	\begin{align}\label{Recur+2}
		Z_{2n+1}=\bar{Z}_n \to 0 \quad
		\text {as } n\to \infty
	\end{align}
	provided that
	\begin{align}\label{Z_0'}
		\bar{Z}_{0}\leq \min\left\{(2\widetilde{b}C_{11})^{-\frac{1}{\mu_1}}\ \widetilde{b}^{-\frac{1}{\mu_1^{2}}},\left(2\widetilde{b}C_{11}\right)^{-\frac{1}{\mu_2}}\ \widetilde{b}^{-\frac{1}{\mu_1\mu_2}-\frac{\mu_2-\mu_1}{\mu_2^{2}}}\right\}.
	\end{align}
	We point out that
	\begin{align*}
		\bar{Z}_{0}=Z_1\leq Z_0=\widetilde{Z}_0\leq \int_{A_{\kappa_{*}}}\mathcal{H}(x,|\nabla u|)\,\diff x+\int_{A_{\kappa_{*}}}\mathcal{G}^*(x,u)\,\diff x.
	\end{align*}
	Hence, if we choose $\kappa_{*}>1$ sufficiently large, we obtain
	\begin{align*}
		&\int_{A_{\kappa_{*}}}\mathcal{H}(x,|\nabla u|)\,\diff x+\int_{A_{\kappa_{*}}}\mathcal{G}^*(x,u)\,\diff x\\
		&\leq \min\left\{1,(2\widetilde{b}C_{11})^{-\frac{1}{\mu_1}}\ \widetilde{b}^{-\frac{1}{\mu_1^{2}}},\left(2\widetilde{b}C_{11}\right)^{-\frac{1}{\mu_2}}\ \widetilde{b}^{-\frac{1}{\mu_1\mu_2}-\frac{\mu_2-\mu_1}{\mu_2^{2}}}\right\}.
	\end{align*}
	Therefore, \eqref{k*}, \eqref{Z_0} and \eqref{Z_0'} are satisfied and we then get \eqref{Recur+1} and \eqref{Recur+2} which says that
	\begin{align*}
		Z_n=\int_{A_{\kappa_{n}}}\mathcal{H}(x,|\nabla u|)\,\diff x+\int_{A_{\kappa_{n}} }\mathcal{G}^*(x,u-\kappa_{n})\,\diff x\to 0\quad  \text{as } n\to\infty.
	\end{align*}
	This implies that
	\begin{align*}
		\int_{\Omega}(u-2\kappa_{*})_+^{p^*(x)}\,\diff x=0.
	\end{align*}
	Thus, $(u-2\kappa_{*})_{+}=0$ a.\,e.\,in $\Omega$ and so
	\begin{align*}
		\esssup_{\Omega} u \leq 2\kappa_*.
	\end{align*}
	Replacing $u$ by $-u$ in the above arguments we also get that
	\begin{align*}
		\esssup_{\Omega} (-u) \leq 2\kappa_*.
	\end{align*}
	From the last two estimates we obtain
	\begin{align*}
		\|u\|_{\infty,\Omega}\leq 2\kappa_{*}.
	\end{align*}
	This shows the assertion of the theorem.
\end{proof}

Next, we are going to study the boundedness of weak solutions of \eqref{N} under critical growth and the additional structure condition \eqref{N2}. This result reads as follows.

\begin{theorem} \label{Theo.N}
	Let hypotheses \eqref{H3}, \eqref{D2} and \eqref{N2} be satisfied. Then, any weak solution of problem \eqref{N} is of class $L^\infty (\Omega)\cap L^\infty (\Gamma)$.
\end{theorem}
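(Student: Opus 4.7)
The strategy is to combine the localization/De Giorgi argument of Theorem \ref{Theo.D} with the boundary-truncation bookkeeping of Theorem \ref{N.a-priori}, now using the critical trace embedding of Proposition \ref{prop_C-T-E} in place of the subcritical one. Cover $\overline{\Omega}$ by finitely many balls $\{B_i\}_{i=1}^m$ of radius $R$ so small that each $\Omega_i:=B_i\cap\Omega$ is Lipschitz and, by continuity of the exponents and \eqref{H3},
\begin{align*}
q_i^+<(p^*)_i^-\ \text{on every}\ \Omega_i, \qquad q_i^+<(p_*)_i^-\ \text{on every}\ \Omega_i\ \text{with}\ \Gamma_i:=B_i\cap\Gamma\ne\emptyset.
\end{align*}
Let $u\in W^{1,\mathcal{H}}(\Omega)$ be a weak solution. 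With $A_\kappa$ as in \eqref{def.Ak} and $\Gamma_\kappa$ as in \eqref{def.Gammak}, pick $\kappa_*\geq 1$ large enough that
\begin{align*}
\int_{A_{\kappa_*}}\mathcal{H}(x,|\nabla u|)\,\diff x+\int_{A_{\kappa_*}}\mathcal{G}^*(x,|u|)\,\diff x+\int_{\Gamma_{\kappa_*}}\mathcal{T}^*(x,|u|)\,\diff\sigma<1;
\end{align*}
this is possible by the integrability of $u$ provided by Propositions \ref{prop_C-do-E} and \ref{prop_C-T-E}. Define $\kappa_n$ as in \eqref{def.kn}, $v_n:=(u-\kappa_{n+1})_+$, and set
\begin{align*}
Z_n:=\int_{A_{\kappa_n}}\mathcal{H}(x,|\nabla u|)\,\diff x+\int_{A_{\kappa_n}}\mathcal{G}^*(x,u-\kappa_n)\,\diff x+\int_{\Gamma_{\kappa_n}}\mathcal{T}^*(x,u-\kappa_n)\,\diff\sigma.
\end{align*}

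The next task is to reproduce Claims 1 and 2 from the proof of Theorem \ref{Theo.D} and add a boundary analogue. Testing \eqref{def_sol_N} with $v_n\in\Wp{\mathcal{H}}$ and applying \ref{D2ii}, \ref{D2iii} together with Young's inequality gives, after absorbing the gradient term,
\begin{align*}
\int_{A_{\kappa_{n+1}}}\mathcal{H}(x,|\nabla u|)\,\diff x \leq C\int_{A_{\kappa_{n+1}}}\mathcal{G}^*(x,u)\,\diff x+C\int_{\Gamma_{\kappa_{n+1}}}\left[u^{p_*(x)}+\mu(x)^{\frac{q_*(x)}{q(x)}}u^{q_*(x)}+1\right]\diff\sigma,
\end{align*}
where the boundary term uses \ref{N2} and the fact that $u\geq 1$ on $\Gamma_{\kappa_{n+1}}$. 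Both integrals on the right are then bounded by the scaling trick $u\leq(2^{n+2}-1)(u-\kappa_n)$, yielding a control by $2^{n\alpha_0}\bigl(\int_{A_{\kappa_n}}\mathcal{G}^*(x,v_{n-1})\,\diff x+\int_{\Gamma_{\kappa_n}}\mathcal{T}^*(x,v_{n-1})\,\diff\sigma\bigr)$ for a suitable exponent $\alpha_0$.

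For the interior critical term, Claim 1 of Theorem \ref{Theo.D} applies verbatim on each $\Omega_i$ using Proposition \ref{prop_C-do-E} and the norm-modular relation, producing
\begin{align*}
\int_{A_{\kappa_{n+1}}}\mathcal{G}^*(x,v_n)\,\diff x\leq C\, 2^{cn}\bigl(Z_n^{1+\mu_1}+Z_n^{1+\mu_2}\bigr).
\end{align*}
The boundary analogue is the key new step: on each piece $\Omega_i$ with $\Gamma_i\neq\emptyset$ apply Proposition \ref{prop_C-T-E} with $\Omega=\Omega_i$ (together with Proposition \ref{prop_mod-nor2}) to obtain
\begin{align*}
\int_{\Gamma_i}\mathcal{T}^*(x,v_n)\,\diff\sigma\leq \|v_n\|_{\mathcal{T}^*,\Gamma_i}^{(p_*)_i^-}\leq C\|v_n\|_{1,\mathcal{H},\Omega_i}^{(p_*)_i^-}\leq C\Bigl(\int_{\Omega_i}\mathcal{H}(x,|\nabla v_n|)\,\diff x+\int_{\Omega_i}\mathcal{H}(x,v_n)\,\diff x\Bigr)^{\frac{(p_*)_i^-}{q_i^+}},
\end{align*}
where the exponent is strictly larger than $1$ thanks to the chosen smallness of $R$. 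Summing over $i$ and using $\mathcal{H}(x,v_n)\leq \mathcal{G}^*(x,v_n)+2$ together with \eqref{C.|A_{k_{n+1}}|} (which still holds for the present $Z_n$) yields an estimate of the same type
\begin{align*}
\int_{\Gamma_{\kappa_{n+1}}}\mathcal{T}^*(x,v_n)\,\diff\sigma\leq C\, 2^{c'n}\bigl(Z_n^{1+\nu_1}+Z_n^{1+\nu_2}\bigr),
\end{align*}
with exponents $\nu_j>0$.

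Combining the three estimates produces the recursion
\begin{align*}
Z_{n+1}\leq K\, b^{\,n}\bigl(Z_{n-1}^{1+\lambda_1}+Z_{n-1}^{1+\lambda_2}\bigr),\qquad n\in\N,
\end{align*}
with $K>0$, $b>1$, $0<\lambda_1\leq\lambda_2$ independent of $u$ and $\kappa_*$. Splitting into even and odd subsequences as in the proof of Theorem \ref{Theo.D} and applying Lemma \ref{leRecur} to each yields $Z_n\to 0$, provided $\kappa_*$ is taken large enough to make $Z_0,Z_1$ satisfy the smallness condition of the lemma---which is exactly what the initial choice of $\kappa_*$ above secures. Passing to the limit under the integrals (Lebesgue dominated convergence) gives
\begin{align*}
\int_\Omega (u-2\kappa_*)_+^{p^*(x)}\,\diff x=0 \quad\text{and}\quad \int_\Gamma (u-2\kappa_*)_+^{p_*(x)}\,\diff\sigma=0,
\end{align*}
so $\esssup_\Omega u\leq 2\kappa_*$ and $\esssup_\Gamma u\leq 2\kappa_*$. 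Replacing $u$ by $-u$ gives the same bounds from below, proving $u\in L^\infty(\Omega)\cap L^\infty(\Gamma)$.

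The main obstacle is the new boundary step: we must invoke Proposition \ref{prop_C-T-E} \emph{locally} on each $\Omega_i$ (which is legitimate since the proposition's assumptions \eqref{H3} are preserved by restriction to the Lipschitz domain $\Omega_i$) and match its output with the interior estimate so that both contribute to the \emph{same} combined modular $Z_n$ with a common exponent larger than $1$. The smallness condition $q_i^+<(p_*)_i^-$---which is the reason for localizing---is precisely what makes the boundary contribution a superlinear power of $Z_n$, so that Lemma \ref{leRecur} can be applied to the coupled recursion.
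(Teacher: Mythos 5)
Your outline follows the paper's strategy almost verbatim (same $Z_n$ combining gradient energy, interior $\mathcal{G}^*$-modular and boundary $\mathcal{T}^*$-modular, same testing, same two-step recursion $Z_{n+1}\leq Kb^n(Z_{n-1}^{1+\lambda_1}+Z_{n-1}^{1+\lambda_2})$ handled by even/odd subsequences and Lemma \ref{leRecur}), but the boundary step contains a genuine gap. You estimate the whole boundary modular by
$\int_{\Gamma_i}\mathcal{T}^*(x,v_n)\,\diff\sigma\leq \|v_n\|_{\mathcal{T}^*,\Gamma_i}^{(p_*)_i^-}\leq C\|v_n\|_{1,\mathcal{H},\Omega_i}^{(p_*)_i^-}\leq C R_{n,i}^{(p_*)_i^-/q_i^+}$,
and you need the exponent $(p_*)_i^-/q_i^+$ to exceed $1$, which you claim can be arranged by shrinking $R$ so that $q_i^+<(p_*)_i^-$. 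This is not achievable in general: hypothesis \eqref{H3} does not imply $q(x)<p_*(x)$ pointwise, and localization can only recover pointwise strict inequalities. For instance, with $N=3$, $p\equiv 1.1$, $q\equiv 1.3$ one has $(q/p)^+\approx 1.18<1+\tfrac1N$ and $q<N$, so \eqref{H3} holds, yet $p_*=\frac{(N-1)p}{N-p}\approx 1.16<q$. In such a case your boundary estimate only yields a power of $R_{n,i}$ (hence of $Z_n$) that is at most $1$, the recursion is not superlinear, and Lemma \ref{leRecur} cannot be applied.

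The paper avoids this by splitting $\mathcal{T}^*$ into its two summands and using two different embeddings with two different norm--modular exponents: the unweighted part is estimated as $\int_{\partial\Omega_i}v_n^{p_*(x)}\,\diff\sigma\leq\|v_n\|_{p_*(\cdot),\partial\Omega_i}^{(p_*)_i^-}\leq C\|v_n\|_{1,p(\cdot),\Omega_i}^{(p_*)_i^-}\leq C R_n^{(p_*)_i^-/p_i^+}$ via the critical trace embedding of $W^{1,p(\cdot)}(\Omega_i)$ (Proposition \ref{proposition_embeddings}(iv) with Remark \ref{Rmk}), where the norm--modular relation for the unweighted $p(\cdot)$-space brings in $p_i^+$ rather than $q_i^+$; the weighted part is estimated through $\varphi(x,t):=\mu(x)^{\frac{q_*(x)}{q(x)}}t^{q_*(x)}$ and Proposition \ref{prop_C-T-E} on $\Omega_i$, raising the norm to $(q_*)_i^-$, which gives $C R_n^{(q_*)_i^-/q_i^+}$. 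Both exponents $(p_*)_i^-/p_i^+$ and $(q_*)_i^-/q_i^+$ exceed $1$ after localization because $p<p_*$ and $q<q_*$ hold pointwise under \eqref{H3} (this is exactly condition \eqref{N.loc.exp2} together with $q_i^+<(p^*)_i^-$ for the interior claim). If you replace your single-norm boundary estimate by this splitting, the rest of your argument goes through as written.
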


\begin{proof}
	As in proof of Theorem \ref{Theo.D}, we cover $\overline{\Omega}$ by balls $\{B_i\}_{i=1}^m$ with radius $R$ such that each $\Omega_i:=B_i\cap\Omega$ ($i=1,\ldots,m$) is a Lipschitz domain. Denoting by $I$ the set of all $i\in\{1,\ldots,m\}$ such that $\Gamma_i:=B_i\cap \Gamma\ne\emptyset$, we may take $R$ sufficiently small such that
	\begin{align}\label{N.loc.exp2}
		p_i^+<{(p_*)}^{-}_{i}\ \ \text{and} \ \ q_i^+<{(p^*)}^{-}_{i}\quad \text{for all } i\in\{1,\cdots,m\},
	\end{align}
	where, as before, for a function $f\in C\left(\overline{\Omega}\right)$ and $i\in\{1,\cdots,m\}$ we denote
\begin{align*}
	f_i^+:=\max_{x\in\overline{\Omega}_i} f(x)
	\quad \text{and} \quad f^{-}_{i}:=\min_{x\in\overline{\Omega}_i} f(x).
\end{align*}

	Let $u$ be a weak solution to problem~\eqref{N} and let $\kappa_{*}\geq 1$ be sufficiently large such that
	\begin{align}\label{N.k*}
		\int_{A_{\kappa_{*}}}\mathcal{H}(x,|\nabla u|)\,\diff x+\int_{A_{\kappa_{*}}}\mathcal{G}^*(x,|u|)\,\diff x+\int_{\Gamma_{\kappa_{*}}}\mathcal{T}^*(x,|u|)\,\diff\sigma<1,
	\end{align}
	where $A_\kappa$ and $\Gamma_\kappa$ are defined by \eqref{def.Ak} and \eqref{def.Gammak}, respectively, and for all $(x,t)\in \overline{\Omega}\times [0,\infty)$,
	\begin{align*}
		\mathcal{G}^*(x,t):=t^{p^*(x)}+\mu(x)^{\frac{q^*(x)}{q(x)}}t^{q^*(x)}
		\quad\text{and}\quad \mathcal{T}^*(x,t):=t^{p_*(x)}+\mu(x)^{\frac{q_*(x)}{q(x)}}t^{q_*(x)}.
	\end{align*}
	Let $\{\kappa_{n}\}_{n\in\N_0}$ be as in \eqref{def.kn} and for each $n\in\N_0$, we define $v_n:=(u-\kappa_{n+1})_+$ and
	\begin{align} \label{N.Zn.def}
		Z_n:=\int_{A_{\kappa_{n}}}\mathcal{H}(x,|\nabla u|)\,\diff x+\int_{A_{\kappa_{n}} }\mathcal{G}^*(x,u-\kappa_{n})\,\diff x+\int_{\Gamma_{\kappa_n}}\mathcal{T}^*(x,u-\kappa_{n})\,\diff\sigma,
	\end{align}
	where, as before, we see that
	\begin{align}\label{N.Zn.decreasing}
		Z_{n+1}\leq Z_n\quad\text{for all } n\in \N_0,
	\end{align}
	and
	\begin{align}
	u(x)&\leq (2^{n+2}-1)(u(x)-\kappa_{n})\quad \text{for a.\,a.\,}x\in A_{\kappa_{n+1}}\quad \text{for all } n\in\N_0,\label{N.est.u1}\\
	u(x)&\leq (2^{n+2}-1)(u(x)-\kappa_{n})\quad \text{for a.\,a.\,}x\in \Gamma_{\kappa_{n+1}}\quad\text{for all } n\in\N_0,\label{N.est.u2}\\
	|A_{\kappa_{n+1}}|
	&\leq \frac{2^{(n+1)(p^*)^+}}{\kappa_{*}^{(p^*)^{-}}} Z_n\leq 2^{(n+1)(p^*)^+} Z_n\quad \text{for all } n\in\N_0.\label{N.|A_k|}
	\end{align}

	In the rest of the proof, $C_i$ ($i\in\N$) are again positive constants independent of $n$ and $\kappa_{*}$. As in the proof of Theorem~\ref{Theo.D} the following assertion holds.
	\vskip5pt
	{\bf Claim 1:}
	It holds that 	\begin{align*}
		\int_{A_{\kappa_{n+1}}}\mathcal{G}^*(x,v_n)\,\diff x\leq C_12^{\frac{n\left((p^*)^+\right)^2}{q^-}}\left(Z_{n}^{1+\nu_1}+Z_{n}^{1+\nu_2}\right)\quad \text{for all } n\in\N_0,
	\end{align*}
	where
	\begin{align*}
		0<\nu_1:=\min_{1\leq i\leq m} \frac{(p^*)_i^-}{q_i^+}-1\leq\nu_2:=\max_{1\leq i\leq m} \frac{(p^*)_i^-}{q_i^+}-1.
	\end{align*}

	We also have a similar estimate for the trace of $u$.

	\vskip5pt
	{\bf Claim 2:}
	There exist positive constants $\nu_3,\nu_4$ such that
	\begin{align*}
		\int_{\Gamma_{\kappa_{n+1}}}\mathcal{T}^*(x,v_n)\,\diff \sigma\leq  C_2 2^{\frac{n(p^*)^+(q_*)^+}{p^-}}\left(Z_{n}^{1+\nu_3}+Z_{n}^{1+\nu_4}\right)\quad \text{for all } n\in\N_0.
	\end{align*}
	Indeed, we have
	\begin{align}\label{N.deco.Bdr}
		\int_{\Gamma_{\kappa_{n+1}}}\mathcal{T}^*(x,v_n)\,\diff \sigma=\int_{\Gamma}\mathcal{T}^*(x,v_n)\,\diff \sigma\leq \sum_{i\in I}\int_{\partial\Omega_i}\mathcal{T}^*(x,v_n)\,\diff \sigma.
	\end{align}
 	Let $i\in I$. From the relation between the norm and the modular (see Propositions \ref{prop_mod-nor} and \ref{prop_mod-nor2}), the critical trace embedding for $W^{1,p(\cdot)}(\Omega_i)$ (see Proposition~\ref{proposition_embeddings} and Remark~\ref{Rmk}) and due to \eqref{N.k*}, we have
 	\begin{equation}\label{N.BE1}
 		\int_{\partial\Omega_i}v_n^{p_*(x)}\diff\sigma\leq \|v_n\|_{p_*(\cdot),\partial\Omega_i}^{(p_*)^-_i}\leq C_3\|v_n\|_{1,p(\cdot),\Omega_i}^{(p_*)^-_i}\leq C_3 R_{n}^{\frac{(p_*)^-_i}{p_i^+}},
 	\end{equation}
where
\begin{align*}
R_{n}:=\int_{\Omega}\mathcal{H}(x,|\nabla v_n|)\,\diff x+\int_{\Omega}\mathcal{H}(x,v_n)\,\diff x.
\end{align*}
Define $\varphi(x,t):=\mu(x)^{\frac{q_*(x)}{q(x)}}t^{q_*(x)}$ for $(x,t)\in\overline{\Omega}\times [0,\infty)$. From the relation between the norm and the modular, Proposition~\ref{prop_C-T-E}, and due to \eqref{N.k*} again, we have
 \begin{equation}\label{N.BE2}
 	\int_{\partial\Omega_i}\mu(x)^{\frac{q_*(x)}{q(x)}}v_n^{q_*(x)}\diff\sigma\leq \|v_n\|_{\varphi,\partial\Omega_i}^{(q_*)^-_i}\leq \|v_n\|_{\mathcal{T}^*,\partial\Omega_i}^{(q_*)^-_i}\leq C_4\|v_n\|_{1,\mathcal{H},\Omega_i}^{(q_*)^-_i}\leq C_4 R_{n}^{\frac{(q_*)^-_i}{q_i^+}}.
 \end{equation}
Combining \eqref{N.BE1} with \eqref{N.BE2} gives
	\begin{align*}
		\int_{\partial\Omega_i}\mathcal{T}^*(x,v_n)\,\diff \sigma&\leq C_3 R_{n}^{\frac{(p_*)^-_i}{p_i^+}}+C_4 R_{n}^{\frac{(q_*)^-_i}{q_i^+}}\\
		&\leq C_5\left(\int_{A_{\kappa_{n+1}}}\mathcal{H}(x,|\nabla u|)\,\diff x+\int_{A_{\kappa_{n+1}}}\mathcal{G}^*(x,v_n)\,\diff x+|A_{\kappa_{n+1}}|\right)^{\frac{(p_*)_i^-}{p_i^+}}\\
		&\quad+ C_5\left(\int_{A_{\kappa_{n+1}}}\mathcal{H}(x,|\nabla u|)\,\diff x+\int_{A_{\kappa_{n+1}}}\mathcal{G}^*(x,v_n)\,\diff x+|A_{\kappa_{n+1}}|\right)^{\frac{(q_*)_i^-}{q_i^+}}.
	\end{align*}
	From this, \eqref{N.Zn.def}, \eqref{N.Zn.decreasing} and \eqref{N.|A_k|} we obtain
	\begin{align*}
		\int_{\partial\Omega_i}\mathcal{T}^*(x,v_n)\,\diff \sigma
		\leq C_62^{\frac{n(p^*)^+(p_*)_i^-}{p_i^+}}Z_{n}^{\frac{(p_*)_i^-}{p_i^+}}+C_72^{\frac{n(p^*)^+(q_*)_i^-}{q_i^+}}Z_{n}^{\frac{(q_*)_i^-}{q_i^+}}.
	\end{align*}
	Combining this with \eqref{N.deco.Bdr} and noticing \eqref{basic.ineq} we arrive at
	\begin{align*}
		\int_{\Gamma}\mathcal{T}^*(x,v_n)\,\diff \sigma\leq C_82^{\frac{n(p^*)^+(q_*)^+}{p^-}}\left(Z_{n}^{1+\nu_3}+Z_{n}^{1+\nu_4}\right),
	\end{align*}
	where
	\begin{align*}
		0<\nu_3:=\min_{1\leq i\leq m} \min\left\{\frac{(p_*)_i^-}{p_i^+},\frac{(q_*)_i^-}{q_i^+}\right\}-1\leq\nu_4:=\max_{1\leq i\leq m} \max\left\{\frac{(p_*)_i^-}{p_i^+},\frac{(q_*)_i^-}{q_i^+}\right\}-1,
	\end{align*}
	see \eqref{N.loc.exp2}. Hence, we have proved Claim 2.

	\vskip5pt
	{\bf Claim 3:} It holds that
	\begin{align*}
		\int_{A_{\kappa_{n+1}}}\mathcal{H}(x,|\nabla u|)\,\diff x\leq  C_9 	2^{n\big[\frac{\left((p^*)^+\right)^2}{q^-}+\frac{(p^*)^+(q_*)^+}{p^-}+(q^*)^+\big]}\left(Z_{n-1}^{1+\mu_1}+Z_{n-1}^{1+\mu_2}\right)\quad \text{for all } n\in\N,
	\end{align*}
	where $0<\mu_1:=\min_{1\leq i\leq 4}\nu_i\leq \mu_2:=\max_{1\leq i\leq 4}\nu_i$.

	Testing \eqref{def_sol_N} by $\varphi =v_n \in W^{1,\mathcal{H}}(\Omega)$ gives
	\begin{align*}
		\int_{A_{\kappa_{n+1}}}
		\mathcal{A}(x,u,\nabla u) \cdot \nabla u\,\diff x =&\int_{A_{\kappa_{n+1}} }\mathcal{B}(x,u,\nabla u)(u-\kappa_{n+1})\,\diff x\\
		&+\int_{\Gamma_{\kappa_{n+1}} }\mathcal{C}(x,u)(u-\kappa_{n+1})\,\diff \sigma.
	\end{align*}
	Arguing as in the proof of Theorem~\ref{Theo.D} (see the proof of Claim 2), we obtain
	\begin{align*}
		\int_{A_{\kappa_{n+1}}} \mathcal{A}(x,u,\nabla u)\cdot \nabla u\,\diff x \geq \alpha_2\int_{A_{\kappa_{n+1}} }\mathcal{H}(x,|\nabla u|)\,\diff x-2\alpha_3\int_{A_{\kappa_{n+1}} }\mathcal{G}^*(x,u)\,\diff x,
	\end{align*}
	and
	\begin{align*}
		\int_{A_{\kappa_{n+1}}} \mathcal{B}(x,u,\nabla u)(u-\kappa_{n+1})\,\diff x\leq \frac{\alpha_2}{2}\int_{A_{\kappa_{n+1}} }\mathcal{H}(x,|\nabla u|)\,\diff x+C_{10}\int_{A_{\kappa_{n+1}} }\mathcal{G}^*(x,u)\,\diff x.
	\end{align*}
	Furthermore, by hypothesis \eqref{N2}, we have
	\begin{align*}
		\int_{\Gamma_{\kappa_{n+1}} }\mathcal{C}(x,u)(u-\kappa_{n+1})\,\diff\sigma &\leq \gamma \int_{\Gamma_{\kappa_{n+1}}} \left[u^{p_*(x)-1}+\mu(x)^{\frac{q_*(x)}{q(x)}}u^{q_*(x)-1}+1\right]u\,\diff\sigma\\
		&\leq 2\gamma \int_{\Gamma_{\kappa_{n+1}} }\mathcal{T}^*(x,u)\,\,\diff\sigma.
	\end{align*}
	Combining the last three estimates, we obtain
	\begin{align*}
		&\int_{A_{\kappa_{n+1}}} \mathcal{H}(x,|\nabla u|)\,\diff x\leq  C_{11}\int_{A_{\kappa_{n+1}} }\mathcal{G}^*(x,u)\,\diff x+C_{12}\int_{\Gamma_{\kappa_{n+1}} }\mathcal{T}^*(x,u)\,\,\diff\sigma.
	\end{align*}
	Then, by using \eqref{N.est.u1} and \eqref{N.est.u2} we deduce from the preceding inequality that
	\begin{align*}
		\int_{A_{\kappa_{n+1}}} \mathcal{H}(x,|\nabla u|)\,\diff x	\leq C_{13}2^{n(q^*)^+}\left[\int_{A_{\kappa_{n}} }\mathcal{G}^*(x,v_{n-1})\,\diff x+\int_{\Gamma_{\kappa_{n}} }\mathcal{T}^*(x,v_{n-1})\,\,\diff\sigma\right].
	\end{align*}
	Then, Claim 3 follows from the last inequality and Claims 1 and 2.

	From Claims 1--3 and \eqref{N.Zn.decreasing} we arrive at
	\begin{align}\label{N.Recur}
		Z_{n+1}\leq C_{14} b^n\left(Z_{n-1}^{1+\mu_1}+Z_{n-1}^{1+\mu_2}\right)\quad \text{for all } n\in\N,
	\end{align}
	where $b:=2^{\big[\frac{\left((p^*)^+\right)^2}{q^-}+\frac{(p^*)^+(q_*)^+}{p^-}+(q^*)^+\big]}>1$. Repeating the arguments used in the proof of Theorem \ref{Theo.D}, by choosing $\kappa_{*}>1$ sufficiently large such that
	\begin{align*}
		&\int_{A_{\kappa_{*}}}\mathcal{H}(x,|\nabla u|)\,\diff x+\int_{A_{\kappa_{*}} }\mathcal{G}^*(x,u)\,\diff x+\int_{\Gamma_{\kappa_{*}} }\mathcal{T}^*(x,u)\,\diff \sigma\\
		&\leq \min\left\{1,(2\widetilde{b}C_{14})^{-\frac{1}{\mu_1}}\ \widetilde{b}^{-\frac{1}{\mu_1^{2}}},\left(2\widetilde{b}C_{14}\right)^{-\frac{1}{\mu_2}}\ \widetilde{b}^{-\frac{1}{\mu_1\mu_2}-\frac{\mu_2-\mu_1}{\mu_2^{2}}}\right\},
	\end{align*}
	where $\widetilde{b}:=b^2$, we deduce from \eqref{N.Recur} that
	\begin{align*}
		Z_n=\int_{\Omega}\mathcal{H}(x,|\nabla v_{n-1}|)\,\diff x+\int_{\Omega}\mathcal{G}^*(x,v_{n-1})\,\diff x+\int_{\Gamma} \mathcal{T}^*(x,v_{n-1})\,\diff \sigma\to 0 \quad \text{as } n\to\infty.
	\end{align*}
	This implies that
	\begin{align*}
		\int_{\Omega}(u-2\kappa_{*})_+^{p^*(x)}\,\diff x+\int_{\Gamma}(u-2\kappa_{*})_+^{p_*(x)}\,\diff \sigma=0.
	\end{align*}
	Therefore, $(u-2\kappa_{*})_{+}=0$ a.\,e.\,in $\Omega$ and $(u-2\kappa_{*})_{+}=0$ a.\,e.\,on $\Gamma$. This means
	\begin{align*}
		 \esssup_{x\in\Omega}u(x)+\esssup_{x\in\Gamma}u(x)
		 \leq 4\kappa_{*}.
	\end{align*}

	Replacing $u$ by $-u$ in the above arguments we also obtain
	\begin{align*}
		\esssup_{x\in\Omega} (-u)(x)
		+\esssup_{x\in\Gamma} (-u)(x)
		\leq 4\kappa_{*}.
	\end{align*}

	Hence
	\begin{align*}
		\|u\|_{\infty,\Omega}+\|u\|_{\infty,\Gamma}\leq 4\kappa_{*}.
	\end{align*}
	This finishes the proof.
\end{proof}

\section*{Acknowledgment}
	The first author was supported by the University of Economics Ho Chi Minh City, Vietnam.
%
%

\end{document}